\author{Erez Lapid}
\address{Department of Mathematics, Weizmann Institute of Science, Rehovot 7610001 Israel}
\email{erez.m.lapid@gmail.com}
\author{Zhengyu Mao}
\address{Department of Mathematics and Computer Science, Rutgers University, Newark, NJ 07102, USA}
\email{zmao@rutgers.edu}
\title[On an analogue of the Ichino--Ikeda conjecture]{On an analogue of the Ichino--Ikeda conjecture for Whittaker coefficients on the metaplectic group}
\date{\today}
\thanks{E.L. was partially supported by Grant \# 711733 from the Minerva Foundation.}
\thanks{Z.M. was partially supported by NSF grants DMS 1000636 and 1400063 and by a fellowship from the Simons Foundation}
\keywords{Whittaker coefficients, metaplectic group, automorphic forms}
\subjclass[2010]{11F30, 11F70}
\newcommand{\A}{\mathbb{A}}                            
\newcommand{\C}{\mathbb{C}}                            
\newcommand{\R}{\mathbb{R}}                            
\newcommand{\Z}{\mathbb{Z}}                            
\newcommand{\bs}{\backslash}
\renewcommand{\Re}{\operatorname{Re}}
\newcommand{\symp}{\mathbb{V}}
\newcommand{\Hom}{\operatorname{Hom}}
\newcommand{\vol}{\operatorname{vol}}
\newcommand{\temp}{\operatorname{temp}}
\newcommand{\dsum}{\oplus}
\newcommand{\iii}{\mathrm{i}}
\newcommand{\OO}{\mathcal{O}}                         
\newcommand{\eps}{\epsilon}
\newcommand{\auxx}{{\bm \epsilon}}
\newcommand{\auxthree}{\auxx_1}
\newcommand{\auxauxone}{\auxx_2}
\newcommand{\auxone}{\auxx_3}
\newcommand{\auxtwo}{\auxx_4}
\newcommand{\auxfour}{\auxx_5}
\newcommand{\stint}{\int^{st}}                         
\newcommand{\des}{\mathcal{D}_\psi}                     
\newcommand{\desinv}{\mathcal{D}_{\psi^{-1}}}
\newcommand{\spcl}{^\circ}                               
\newcommand{\whitform}{A^{\psi}}                              
\newcommand{\nwhitform}{A^\psi_\sharp}
\newcommand{\nwhitformd}{A^{\psi^{-1}}_\sharp}
\newcommand{\Mint}{A_e^{\psi}}                                
\newcommand{\Mintd}{A_e^{\psi^{-1}}}
\newcommand{\Bes}{\mathbb{B}}                                 
\newcommand{\smth}{\operatorname{sm}}
\newcommand{\wgtt}[1]{\nu(#1)}
\newcommand{\wgt}[1]{\nu'(#1)}
\newcommand{\good}{good}
\newcommand{\zigzag}{\mathcal{Z}}
\newcommand{\per}[1]{\mathfrak{P}^{#1}}
\newcommand{\Etwon}{\mathfrak{E}}
\newcommand{\soment}{T''}
\newcommand{\factor}{\Delta}
\newcommand{\Imk}{T}
\newcommand{\spcltrs}{S}
\newcommand{\twU}{\animg{w'_{U'}}}
\newcommand{\mnrs}{\Delta}
\renewcommand{\d}[1]{#1^{\vee}}
\newcommand{\alt}[1]{#1^{\wedge}}
\newcommand{\Bil}{B}                         
\newcommand{\BilM}{D}                         
\newcommand{\newBil}{\underline{B}}
\newcommand{\oldhalf}{Y^{\psi}}                      
\newcommand{\oldhalfd}{Y^{\psi^{-1}}}
\newcommand{\newhalf}{E^{\psi}}                      
\newcommand{\newhalfd}{E^{\psi^{-1}}}
\newcommand{\animg}[1]{\widetilde{#1}}                        
\newcommand{\Mp}{\widetilde{\operatorname{Sp}}}
\newcommand{\Mat}{\operatorname{Mat}}
\newcommand{\zerocol}{J}
\newcommand{\GL}{\operatorname{GL}}
\newcommand{\Sp}{\operatorname{Sp}}
\newcommand{\SO}{\operatorname{SO}}
\newcommand{\mira}{\mathcal{P}}                     
\newcommand{\sym}{\operatorname{sym}}
\newcommand{\Ind}{\operatorname{Ind}}                   
\newcommand{\res}{\operatorname{res}}                   
\newcommand{\Levi}{M}
\newcommand{\GLnn}{{\mathbb M}}
\newcommand{\GLn}{{{\mathbb M}'}}
\newcommand{\semipair}[2]{\{#1,#2\}}
\newcommand{\alg}[1]{\bf #1}
\newcommand{\csgr}{\mathcal{CSGR}}                     
\newcommand{\Irr}{\operatorname{Irr}}                  
\newcommand{\rest}{\big|}                              
\newcommand{\Cusp}{\operatorname{Cusp}}                
\newcommand{\Mcusp}{\operatorname{MCusp}}              
\newcommand{\meta}{\operatorname{meta}}                
\newcommand{\gen}{\operatorname{gen}}                  
\newcommand{\sqr}{\operatorname{sqr}}                  
\newcommand{\levi}{\varrho}                            
\newcommand{\toG}{\eta}                                
\newcommand{\toM}{\eta_{\GLnn}}                        
\newcommand{\toLevi}{\eta_{\Levi}}                     
\newcommand{\toMd}{\eta_{\GLnn}^\vee}                  
\newcommand{\toLevid}{\eta_{\Levi}^\vee}               
\newcommand{\toU}{\ell}
\newcommand{\startran}[1]{\breve{#1}}             
\newcommand{\swz}{\mathcal{S}}
\newcommand{\swrz}{C_c^{\infty}}
\newcommand{\modulus}{\delta}
\newcommand{\diag}{\operatorname{diag}}
\newcommand{\Vm}{V_-}
\newcommand{\Vp}{V_+}
\newcommand{\altV}{V^\sharp}
\newcommand{\Heise}{\mathcal{H}}        
\newcommand{\whit}{\mathcal{W}}                          
\newcommand{\Whit}{\mathbb{W}}                           
\newcommand{\WhitM}{\Whit^{\psi_{N_\GLnn}}}                  
\newcommand{\WhitMd}{\Whit^{\psi_{N_\GLnn}^{-1}}}
\newcommand{\WhitML}{\Whit^{\psi_{N_\Levi}}}                  
\newcommand{\WhitMLd}{\Whit^{\psi_{N_\Levi}^{-1}}}
\newcommand{\WhitG}{\Whit^{\psi_{\tilde N}}}                  
\newcommand{\WhitGd}{\Whit^{\psi_{\tilde N}^{-1}}}
\newcommand{\spclW}{\Ind(\WhitML(\pi))\spcl_\sharp}
\newcommand{\spclWd}[1]{\Ind(\WhitMLd(#1))\spcl_{\natural}}
\newcommand{\spclWdM}{\WhitMd(\d\pi)_{\natural}}
\newcommand{\symspace}{\mathfrak{s}}                 
\newcommand{\subUbar}{\bar U^\urcorner}
\newcommand{\oldsubUbar}{\hat{\psi}_{\subUbar}}
\newcommand{\vf}{N_{\GLnn}^{\flat}}
\newcommand{\vrq}{\toMd(N'_{\GLn})\bs\vf}
\newcommand{\vrbar}{\bar R}
\newcommand{\kgrp}{N^\sharp}
\newcommand{\kgrpq}{\altV_{\Levi}\bs\kgrp}
\newcommand{\bigvr}{\toLevi(N'_{\GLn})\bs\kgrp}
\newcommand{\vs}{V_\Delta}
\newcommand{\remr}{N_{\GLnn,\Delta}}
\newcommand{\one}{\epsilon}        
\newcommand{\num}{{\mathfrak a}}
\newcommand{\few}{{\hat w}}
\newcommand{\rkn}{\mathbf{n}}
\newcommand{\wnn}{{w_0^\GLnn}}
\newcommand{\wnnM}{{w_0^\Levi}}
\newcommand{\wn}{{w_0^{\GLn}}}
\newcommand{\wnM}{{w_0^{\Levi'}}}
\newcommand{\Hei}[1]{{#1}_{\Heise}}
\newcommand{\sprod}[2]{\left\langle#1,#2\right\rangle}
\newcommand{\abs}[1]{\left|{#1}\right|}
\newcommand{\sm}[4]{\left(\begin{smallmatrix}{#1}&{#2}\\{#3}&{#4}\end{smallmatrix}\right)}
\newcommand{\weil}{\omega}
\newcommand{\wev}{\weil_{\psi}}
\newcommand{\wevinv}{\weil_{\psi^{-1}}}
\newcommand{\weilfctr}{\gamma_\psi}                       
\newcommand{\der}{\operatorname{der}}                     
\newcommand{\spclt}{\mathfrak{t}}
\newcommand{\ddg}{\mathfrak{d}}
\newtheorem{theorem}{Theorem}[section]
\newtheorem{lemma}[theorem]{Lemma}
\newtheorem{proposition}[theorem]{Proposition}
\newtheorem{remark}[theorem]{Remark}
\newtheorem{conjecture}[theorem]{Conjecture}
\newtheorem{definition}[theorem]{Definition}
\newtheorem{corollary}[theorem]{Corollary}
\numberwithin{equation}{section}
\begin{document}

\begin{abstract}
In previous papers we formulated an analogue of the Ichino--Ikeda conjectures for Whittaker--Fourier coefficients of automorphic forms
on quasi-split classical groups and the metaplectic group of arbitrary rank. In the latter case we reduced the conjecture to a local identity.
In this paper we will prove the local identity in the $p$-adic case, and hence the global conjecture under simplifying conditions
at the archimedean places.
\end{abstract}

\maketitle

\setcounter{tocdepth}{1}
\tableofcontents

\section{Introduction}

Let $G$ be a quasi-split group over a number field $F$ with ring of adeles $\A$.
In a previous paper \cite{MR3267120} we formulated (under some hypotheses) a conjecture relating Whittaker-Fourier coefficients of cusp forms on $G(F)\bs G(\A)$
to the Petersson inner product.
This conjecture is in the spirit of conjectures of Sakellaridis--Venkatesh \cite{1203.0039} and Ichino--Ikeda \cite{MR2585578},
which attempt to generalize the classical work of Waldspurger \cite{MR783511, MR646366}.
In the case of (quasi-split) classical groups, as well as the metaplectic group (i.e.,
the metaplectic double cover of the symplectic group) we explicated this conjecture
using the descent construction of Ginzburg--Rallis--Soudry \cite{MR2848523}
and the functorial transfer of generic representations of classical groups by Cogdell--Kim--Piatetski-Shapiro--Shahidi
\cite{MR1863734, MR2075885, MR2767514}.

In a follow-up paper \cite{1401.0198} we reduced the global conjecture in the metaplectic case to a local conjectural identity.
We also gave a \emph{purely formal} argument for the case of $\Mp_1$ (i.e., ignoring convergence issues).
In this paper we will prove the local identity in the $p$-adic case, justifying the heuristic analysis
(and extending it to the general case).

Let us recall the conjecture of \cite{MR3267120} in the case of the metaplectic group $\Mp_n(\A)$,
the double cover of $\Sp_n(\A)$ with the standard (Rao) co-cycle.
We view $\Sp_n(F)$ as a subgroup of $\Mp_n(\A)$.
For any genuine function $\varphi$ on $\Sp_n(F)\bs\Mp_n(\A)$ we consider the Whittaker coefficient
\[
\tilde\whit(\tilde\varphi)=\tilde\whit^{\psi_{\tilde N}}(\tilde\varphi):=
(\vol(N'(F)\bs N'(\A)))^{-1}\int_{N'(F)\bs N'(\A)}\tilde\varphi(u)\psi_{\tilde N}(u)^{-1}\ du.
\]
Here $\psi_{\tilde N}$ is a non-degenerate character on $N'(\A)$, trivial on $N'(F)$
where $N'$ is the standard maximal unipotent subgroup of $\Sp_n$.
(We view $N'(\A)$ as a subgroup of $\Mp_n(\A)$.)
We also consider the inner product
\[
(\tilde\varphi,\tilde\varphi^\vee)_{\Sp_n(F)\bs\Sp_n(\A)}=(\vol(\Sp_n(F)\bs\Sp_n(\A)))^{-1}
\int_{\Sp_n(F)\bs\Sp_n(\A)}\tilde\varphi(g)\tilde\varphi^\vee(g)\ dg.
\]
of two square-integrable genuine functions on $\Sp_n(F)\bs\Mp_n(\A)$.

Another ingredient in the conjecture of \cite{MR3267120} is a regularized integral
\[
\stint_{N'(F_S)}f(u)\ du
\]
for a finite set of places $S$ and for a suitable class of smooth functions $f$ on $N'(F_S)$.
Suffice it to say that if $S$ consists solely of non-archimedean places then
\[
\stint_{N'(F_S)}f(u)\ du=\int_{N_1'}f(u)\ du
\]
for any sufficiently large compact open subgroup $N_1'$ of $N'(F_S)$.
(The definition of the regularized integral is different in the archimedean case, however in this paper we do not use regularized integrals over archimedean fields.)

The conjecture of \cite{MR3267120} is applicable for $\psi_{\tilde N}$-generic representations which are not exceptional,
in the sense that their theta $\psi$-lift to $\SO(2n+1)$ is cuspidal (or equivalently, their theta $\psi$-lift to $\SO(2n-1)$ vanishes;
here $\psi$ is determined by $\psi_{\tilde N}$).
By \cite[Chapter~11]{MR2848523} which is also based on \cite{MR1863734}, there is a one-to-one correspondence between these representations
and automorphic representations $\pi$ of $\GL_{2n}(\A)$ which are the isobaric sum $\pi_1\boxplus\dots\boxplus\pi_k$ of
pairwise inequivalent irreducible cuspidal representations $\pi_i$ of
$\GL_{2n_i}(\A)$, $i=1,\dots,k$ (with $n_1+\dots+n_k=n$) such that $L^S(\frac12,\pi_i)\ne0$ and $L^S(s,\pi_i,\wedge^2)$ has a pole (necessarily simple)
at $s=1$ for all $i$. Here $L^S(s,\pi_i)$ and $L^S(s,\pi_i,\wedge^2)$ are the standard and exterior square (partial) $L$-functions, respectively.
More specifically, to any such $\pi$ one constructs a $\psi_{\tilde N}$-generic representation $\tilde\pi$ of $\Mp_n(\A)$,
which is called the $\psi_{\tilde N}$-descent of $\pi$.
The theta $\psi$-lift of $\tilde\pi$ is the unique irreducible generic cuspidal representation of $\SO(2n+1)$ which lifts to $\pi$.

\begin{conjecture}(\cite[Conjecture 1.3]{MR3267120}) \label{conj: metplectic global}
Assume that $\tilde\pi$ is the $\psi_{\tilde N}$-descent of $\pi$ as above.
Then for any $\tilde\varphi\in\tilde\pi$ and $\d{\tilde\varphi}\in\d{\tilde\pi}$ and for any sufficiently large finite set $S$ of places of $F$ we have
\begin{multline} \label{eq: globalidentity}
\tilde\whit^{\psi_{\tilde N}}(\tilde\varphi)\tilde\whit^{\psi_{\tilde N}^{-1}}(\d{\tilde\varphi})=
2^{-k}(\prod_{i=1}^n\zeta_F^S(2i))\frac{L^S(\frac12,\pi)}{L^S(1,\pi,\sym^2)}\times\\
(\vol(N'(\OO_S)\bs N'(F_S)))^{-1}\stint_{N'(F_S)}(\tilde\pi(u)\tilde\varphi,\d{\tilde\varphi})_{\Sp_n(F)\bs\Sp_n(\A)}\psi_{\tilde N}(u)^{-1}\ du
\end{multline}
Here $\zeta_F^S(s)$ is the partial Dedekind zeta function and $\OO_S$ is the ring of $S$-integers of $F$
and $L^S(s,\pi,\sym^2)$ is the symmetric square partial $L$-function of $\pi$.
\end{conjecture}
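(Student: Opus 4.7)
The plan is to deduce the global identity \eqref{eq: globalidentity} from a purely local identity at each place in $S$, as established by the reduction in \cite{1401.0198}. That reduction expresses both sides of \eqref{eq: globalidentity} as Euler products of local distributions, and identifies Conjecture \ref{conj: metplectic global} with the assertion that, at each $v \in S$, a certain local Bessel-type distribution built from matrix coefficients of $\tilde\pi_v$ and the local Whittaker functionals equals $L(\frac12,\pi_v)/L(1,\pi_v,\sym^2)$ up to an explicit normalizing factor involving $\zeta_{F_v}(2i)$, powers of the Weil index $\weilfctr$, and a factor $2^{-k}$. The task thus reduces to proving the local identity at every non-archimedean place and to checking compatibility at the archimedean places under the simplifying hypotheses.

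First, I would unfold the left-hand side of the local identity using the descent realization of $\tilde\pi_v$: since $\tilde\pi_v$ arises as a Fourier--Jacobi-type coefficient of a residual representation induced from $\pi_v$ on $\GL_{2n}(F_v)$, twisted by the Weil representation $\we$, the local Bessel distribution can be rewritten as an integral involving Whittaker functions of $\pi_v$ against a matrix coefficient of $\we$. Next, I would use multiplicativity to reduce to the case where $\pi_v$ is the local component of a single cuspidal block $\pi_i$ (so that $\pi_{i,v}$ is irreducible and of symplectic type), by analyzing both sides under parabolic induction and tracking the attendant intertwining operators. Finally, in the irreducible case, I would compare the resulting integral with a Rankin--Selberg type integral representing the $L$-quotient and conclude by analytic continuation from an open region of Satake parameters on which both sides are given by absolutely convergent integrals.

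The hard part will be the local $p$-adic identity in the irreducible case. The two principal obstacles are (a) controlling the asymptotic behavior of Whittaker functions of $\pi_v$ inside the Fourier--Jacobi integral so that the interchanges of integration used in the unfolding are justified, and (b) tracking the numerical constants, including the Weil indices $\weilfctr$, the normalization of the descent map, and the comparison of measures on $N'(F_v)$, through the chain of identifications in order to recover the factor $2^{-k}\prod_{i=1}^n\zeta_F^S(2i)$ on the right-hand side of \eqref{eq: globalidentity}. The regularized integral $\stint_{N'(F_S)}$ causes no additional difficulty at $p$-adic places since it collapses to an integral over a sufficiently large compact open subgroup of $N'(F_S)$. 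Once the $p$-adic local identity is in hand, the global conjecture follows for test data whose archimedean components satisfy the simplifying conditions alluded to in the abstract, by the standard Euler-product argument.
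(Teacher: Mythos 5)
You are trying to prove a statement that this paper itself does not prove in full: \eqref{eq: globalidentity} is quoted as a conjecture from \cite{LMao5}, and what the paper actually establishes is the local input $c_{\pi_v}=\epsilon(\frac12,\pi_v,\psi_v)$ at finite places (Theorem \ref{thm: intromain}), which, combined with the earlier reduction \cite[Theorem 6.2]{1401.0198} and the real square-integrable case from \cite{1404.2909}, yields the conjecture only when $F$ is totally real and $\tilde\pi_\infty$ is discrete series. Your architecture (global-to-local reduction, local $p$-adic identity, archimedean simplifications) is the same as the paper's at the coarsest level, but your formulation of the local identity is wrong in shape: at $v\in S$ the statement to prove is \emph{not} that a local Bessel-type distribution equals $L(\frac12,\pi_v)/L(1,\pi_v,\sym^2)$ up to normalization — the $L$-quotient in \eqref{eq: globalidentity} is the partial $L$-function coming from the unramified places outside $S$, already accounted for in \eqref{eq: globalidentity2}. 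What must be proved at $v\in S$ is that the constant $c_{\pi_v}$ in the identity \eqref{eq: main}/\eqref{eq: mainidentity} equals the root number $\epsilon(\frac12,\pi_v,\psi_v)$, whose product over $v\in S$ is $1$; chasing an $L$-quotient locally would not reconcile with \eqref{eq: globalidentity2}. Moreover, your archimedean step conflates conditions on test vectors with conditions on the representation: the needed input is that $c_{\pi_v}=\epsilon(\frac12,\pi_v,\psi_v)$ for real $v$ when $\tilde\pi_v$ is square-integrable (a formal-degree/Harish-Chandra statement from \cite{1404.2909}), not a choice of ``test data.''

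The deeper gap is in the local $p$-adic argument. Your plan — unfold the descent, reduce by ``multiplicativity'' to a single cuspidal block, compare with a Rankin--Selberg integral and continue analytically in Satake parameters — is precisely the kind of formal manipulation that the paper states is \emph{not} justified as written: the relevant integrals converge only as iterated integrals, and making the use of the functional equations of \cite{LMao4} rigorous is the main content of the paper. The actual proof requires: a reduction to tempered, \emph{\good} $\pi$ with tempered descent via Matringe's classification (Theorem \ref{thm: genmetaclassification}) together with a globalization argument, not block-by-block multiplicativity of the Bessel distribution; the restriction to special sections $\spclW$ and $\spclWd{\pi}$, legitimized by the nonvanishing of Bessel functions (Appendix \ref{sec: nonvanishing}) and the stability results of \cite{MR3021791}; the introduction of the analytic family $\Bil(W,\d W,s)$ so that the functional equation can be applied only for $\Re s\ll 0$ and then continued (Proposition \ref{prop: afterfe}); and the identities of Propositions \ref{prop: tindep} and \ref{prop: Mfactor}, which rest on \cite{1403.6787} and are where the root number $\eps_\pi$ actually enters via \eqref{eq:shalmetagen}. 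None of these ingredients, nor substitutes for them, appear in your outline, so the proposal as it stands does not close the local identity and hence does not reach even the conditional form of the conjecture that the paper proves.
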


The main result in \cite{1401.0198} is
\begin{theorem}(\cite[Theorem 6.2]{1401.0198}) \label{thm: metplectic global}
In the above setup we have
\begin{multline} \label{eq: globalidentity2}
\tilde\whit^{\psi_{\tilde N}}(\tilde\varphi)\tilde\whit^{\psi_{\tilde N}^{-1}}(\d{\tilde\varphi})=
2^{-k}(\prod_{i=1}^n\zeta_F^S(2i))\frac{L^S(\frac12,\pi)}{L^S(1,\pi,\sym^2)}\times\\
\big(\prod_{v\in S}c_{\pi_v}^{-1}\big)(\vol(N'(\OO_S)\bs N'(F_S)))^{-1}
\stint_{N'(F_S)}(\tilde\pi(u)\tilde\varphi,\d{\tilde\varphi})_{\Sp_n(F)\bs\Sp_n(\A)}\psi_{\tilde N}(u)^{-1}\ du
\end{multline}
where $c_{\pi_v}$, $v\in S$ are certain non-zero constants which depend only on the local representations $\pi_v$.
\end{theorem}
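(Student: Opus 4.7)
My plan is to express both sides of \eqref{eq: globalidentity2} as Eulerian integrals via the Ginzburg--Rallis--Soudry descent realization of $\tilde\pi$, match local factors at unramified places by standard computations, and package the a priori unknown local discrepancy at places in $S$ into the constants $c_{\pi_v}$. Specifically, the descent construction realizes $\tilde\varphi\in\tilde\pi$ as a Fourier--Jacobi-type coefficient of the residue at $s=\tfrac12$ of an Eisenstein series on $\Mp_{2n}(\A)$ induced from $\pi\otimes|\det|^s$ on the Siegel parabolic. The Whittaker coefficient $\tilde\whit^{\psi_{\tilde N}}(\tilde\varphi)$ then unfolds to an integral involving the Whittaker--Fourier coefficient of this Eisenstein series; after extracting the residue, the unramified places contribute $L^S(\tfrac12,\pi)$, while the places in $S$ contribute local Whittaker-type integrals. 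Applying this to both $\tilde\varphi$ and $\d{\tilde\varphi}$ turns the product $\tilde\whit^{\psi_{\tilde N}}(\tilde\varphi)\tilde\whit^{\psi_{\tilde N}^{-1}}(\d{\tilde\varphi})$ into a concrete Eulerian expression.

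I would then run the parallel analysis on the right hand side of \eqref{eq: globalidentity2}. The inner product $(\tilde\pi(u)\tilde\varphi,\d{\tilde\varphi})_{\Sp_n(F)\bs\Sp_n(\A)}$ unfolds, via the descent, into a global integral governed by the intertwining operator at $s=\tfrac12$, whose leading Laurent coefficient produces the factor $L^S(1,\pi,\sym^2)^{-1}$ (since $L^S(2s,\pi,\sym^2)$ is the natural normalizer of that intertwining operator). The factor $2^{-k}$ is forced by the reducibility at $s=\tfrac12$ of the induced representation caused by the isobaric decomposition $\pi=\pi_1\boxplus\cdots\boxplus\pi_k$, with each summand $\pi_i$ contributing a factor of $\tfrac12$. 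The stable integral $\stint_{N'(F_S)}$ is the tool that legitimizes the final Eulerian factorization at places in $S$, where the matrix-coefficient integral fails to converge absolutely and must be regularized in the sense of \cite{LMao5}.

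Comparing the two sides, the unramified local identities at $v\notin S$ follow from the Casselman--Shalika formula together with the unramified Shintani-type computation of genuine Whittaker functions on $\Mp_n$, and produce exactly the global factors $\prod_{i=1}^n\zeta_F^S(2i)$, $L^S(\tfrac12,\pi)$, and $L^S(1,\pi,\sym^2)^{-1}$. At $v\in S$, the two unfoldings yield local integrals whose ratio is, at this stage, known only to be a non-zero scalar depending solely on $\pi_v$; this scalar is precisely what is packaged as $c_{\pi_v}$. The main obstacle will be the rigorous manipulation of the residue of the Eisenstein series in parallel with the regularized integral, which requires combining the stable integral formalism of \cite{LMao5} with careful extraction of the leading Laurent coefficient of the intertwining operator, so that the formal Eulerian factorization can be upgraded to an honest identity and the constants $c_{\pi_v}$ can be identified as genuine local invariants of $\pi_v$ (to be pinned down as $1$ only by the subsequent local identity proved in this paper).
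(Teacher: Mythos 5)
You should first be aware that the paper you are reading does not prove this statement at all: Theorem \ref{thm: metplectic global} is quoted verbatim from \cite[Theorem 6.2]{1401.0198}, and the contribution of the present paper is the subsequent identification $c_{\pi_v}=\epsilon(\frac12,\pi_v,\psi_v)$ at finite places (Theorem \ref{thm: intromain}, proved via the Main Identity \eqref{eq: mainidentity}); incidentally, the local input pins $c_{\pi_v}$ down to the root number, not to $1$ as your last sentence suggests. So there is no internal proof to compare yours with, only the strategy of the companion paper. As a reconstruction of that strategy your outline has the right overall shape -- realize $\tilde\pi$ by the Ginzburg--Rallis--Soudry descent, unfold the Whittaker coefficient and the regularized inner product, do the unramified computations, and define $c_{\pi_v}$ as the local proportionality constant at $v\in S$ -- but several load-bearing steps are asserted rather than argued, and one is misstated.

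Concretely: (i) the Eisenstein series underlying the descent lives on the \emph{linear} group $G(\A)=\Sp_{2n}(\A)$ (the paper's $G$, of $4n\times 4n$ matrices), induced from $\pi|\det|^s$ on the Siegel Levi $\GL_{2n}(\A)$; genuineness enters only through the Weil representation inside the Fourier--Jacobi coefficient, exactly as in the local formula \eqref{eq: defwhitform}. An Eisenstein series ``on $\Mp_{2n}(\A)$'' is the wrong object: a genuine Eisenstein series paired against a genuine theta function would yield a non-genuine descent, and its pole condition would be governed by the symmetric square, which for $\pi$ of metaplectic type has no pole. (ii) Relatedly, the normalizing $L$-functions of $M(\pi,s)$ for this parabolic are the standard and exterior-square ones (compare the $\gamma$-factors in \eqref{eq: local functional equation}), not $L(2s,\pi,\sym^2)$; the factor $L^S(1,\pi,\sym^2)^{-1}$, as well as $2^{-k}$ and $\prod_{i=1}^n\zeta_F^S(2i)$, must come out of an actual computation of the regularized Petersson inner product of two descents (e.g.\ through $L(s,\pi\times\pi)=L(s,\pi,\sym^2)L(s,\pi,\wedge^2)$ and the residue of the $\wedge^2$ factor), and your heuristic attributions do not substitute for that computation. (iii) Most importantly, the genuine content of the theorem is that the discrepancy at $v\in S$ is a \emph{scalar} depending only on $\pi_v$, i.e.\ independent of the vectors $\tilde W_v$, $W_v$, $\Phi_v$ and of which side one unfolds; this is precisely what the \good\ properties encode (irreducibility of the local descent, holomorphy of $\tilde J$ at $s=\frac12$, and factorization through a canonical invariant pairing, cf.\ \eqref{eq: main} and Proposition \ref{prop: globalgood}), and establishing them, together with the convergence statements that make the stable integral $\stint_{N'(F_S)}$ compatible with the unfolding and with the Eulerian factorization, is the bulk of the work in \cite{1401.0198}. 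Your proposal names these points as obstacles but offers no argument for them, so as it stands it is an annotated restatement of the theorem rather than a proof.
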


The main result of this paper is
\begin{theorem}\label{thm: intromain}
In Theorem \ref{thm: metplectic global} we have $c_{\pi_v}=\epsilon(\frac12,\pi_v,\psi_v)$ (the root number of $\pi_v)$ for all finite places $v$.
\end{theorem}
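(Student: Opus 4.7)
The plan is to prove, at each finite place $v$, a local identity that expresses the constant $c_{\pi_v}$ as the value of a local $\gamma$-factor at the central point. The identification with the root number will then follow from the self-duality of $\pi_v$, which is a consequence of the pole of $L^S(s,\pi,\wedge^2)$ at $s=1$: indeed, for self-dual $\pi_v$ one has $\gamma(\tfrac12,\pi_v,\psi_v)=\epsilon(\tfrac12,\pi_v,\psi_v)$.

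First, I would unpack the definition of $c_{\pi_v}$ from \cite{1401.0198}. By the reduction carried out there, $c_{\pi_v}$ is the proportionality constant between two local $\Mp_n(F_v)$-invariant sesquilinear forms on the descent $\tilde\pi_v$ of $\pi_v$. One comes from the (stable) integral of a matrix coefficient against $\psi_{\tilde N}^{-1}$ as on the right-hand side of \eqref{eq: globalidentity2}; the other is a Bessel-type distribution on $\GL_{2n}(F_v)$ transported to $\Mp_n(F_v)$ through the Ginzburg--Rallis--Soudry descent. Both forms are invariant under $\Mp_n(F_v)$ and are therefore proportional by uniqueness of the Whittaker functional on $\tilde\pi_v$.

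Next, I would introduce a holomorphic family of such distributions parameterized by $s\in\C$, designed so that the two forms at $s=\tfrac12$ are paired by a local functional equation. Concretely, one deforms the Bessel distribution along standard sections of the induced representation $\Ind(\pi_v\otimes\abs{\cdot}^s)$ (or its metaplectic analogue), and shows that the standard intertwining operator between parameters $s$ and $1-s$ carries one distribution to the other, with proportionality factor equal to $\gamma(s,\pi_v,\psi_v)$ up to elementary $L$-ratios that cancel against those already present in Theorem \ref{thm: metplectic global}. Multiplicativity of $\gamma$-factors under parabolic induction permits a reduction to the case where $\pi_v$ is (essentially) supercuspidal, where the intertwining operator is explicit; the supercuspidal case is then settled by a direct computation using the explicit form of the Bessel distribution and the Shahidi normalization of the intertwining operator.

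Evaluating at $s=\tfrac12$ and invoking self-duality yields the claimed identity. The main technical obstacle I anticipate is not convergence --- in the $p$-adic case this is already absorbed into the stable integral --- but rather the bookkeeping required to propagate the constant through the several ingredients of the descent (the Weil representation, the various Jacquet modules, and the transition between Whittaker and Bessel models), where half-integral normalizations of additive characters and Weil factors easily introduce spurious constants. Should the direct local approach prove unwieldy for general $\pi_v$, a safety-net globalization argument is available: realize $\pi_v$ as a local component of a global $\pi$ which is unramified outside a small set of places (and possibly matches a known test case at one auxiliary place), apply Theorem \ref{thm: metplectic global} to this $\pi$, and use the product formula $\prod_v\epsilon(\tfrac12,\pi_v,\psi_v)=\epsilon(\tfrac12,\pi)=1$ (valid because $\pi$ is self-dual with $L^S(\tfrac12,\pi)\neq0$) together with the trivial values of $c_{\pi_{v'}}$ at unramified places to isolate $c_{\pi_v}$ at $v$.
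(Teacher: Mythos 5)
Your outline reproduces, at a coarse level, the heuristic sketch of \cite[\S7]{1401.0198}, but the two points you wave away are exactly where the content of the proof lies. First, convergence is \emph{not} ``absorbed into the stable integral'': the stable integral only disposes of the outer $N'$-integration, while the Shimura-type integral $\tilde J$, the descent integral $\whitform$, the intertwining operator and the torus integral converge only as iterated integrals, and the functional equations of \cite{LMao4} can be applied only for $\Re s\ll 0$. Making the deformation argument rigorous forces one to work with special sections (supported in the big cell, with further support conditions on the $\GLn$-part), to invoke the stability of Bessel-type integrals from \cite{MR3021791}, and to use the results of \cite{1403.6787} to show that suitable averages of the resulting functions are entire in $s$ before one can specialize to $s=\tfrac12$; this is the bulk of the paper. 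Second, the proportionality constant produced by the functional equation is not $\gamma(s,\pi_v,\psi_v)$: by \eqref{eq: local functional equation} it is the ratio $\gamma(\tilde\pi\otimes\pi,s+\tfrac12,\psi)/\bigl(\gamma(\pi,s,\psi)\gamma(\pi,\wedge^2,2s,\psi)\bigr)$, and the identity one must prove is an explicit evaluation of an integral of products of metaplectic Bessel functions. The final identification with the root number does not come from ``self-duality gives $\gamma=\epsilon$'' but from proving $c_\pi=\eps_\pi$, the sign by which $w_{2\rkn}$ acts on the $\GL_\rkn\times\GL_\rkn$-invariant functional, together with the equality $\eps_\pi=\epsilon(\tfrac12,\pi,\psi)$ established earlier in \cite{1401.0198}; the evaluation itself rests on the model-transition results of \cite{1403.6787}, not on a Shahidi-normalized supercuspidal computation.

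Two further steps of your plan would fail as stated. The reduction to supercuspidal $\pi_v$ via multiplicativity of $\gamma$-factors presupposes that $c_{\pi_v}$ itself is inductive, which is not known and is essentially part of what must be proved; the paper instead reduces to tempered $\pi$ with tempered descent, using Matringe's classification \cite{1301.0350} of $\Irr_{\gen,\meta}\GLnn$, a deformation in the unitary parameters, and a globalization (via \cite{1404.2909}) whose role is to guarantee that $\pi(\underline{s})$ is \emph{good} and that its descent is irreducible and tempered --- hypotheses without which the local identity \eqref{eq: mainidentity} is not even meaningful. Finally, the ``safety-net'' globalization is circular: Theorem \ref{thm: metplectic global} is an identity \emph{containing} the unknown constants $c_{\pi_v}$, and neither side of \eqref{eq: globalidentity2} can be evaluated independently (that would amount to Conjecture \ref{conj: metplectic global} itself), so the global statement together with $\prod_v\epsilon(\tfrac12,\pi_v,\psi_v)=1$ and trivial unramified values gives no handle on $\prod_{v\in S}c_{\pi_v}$, hence none on an individual $c_{\pi_v}$.
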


We also show in Proposition~\ref{prop: central} that the root number of $\pi_v$ equals the central sign of $\tilde\pi_v$.
In \cite{1404.2909} it is shown that Theorem \ref{thm: intromain} implies the formal degree conjecture of
Hiraga--Ichino--Ikeda \cite{MR2350057} (or more precisely, its metaplectic analogue) for generic square-integrable
representations of $\Mp_n$. Conversely, in the real case (where the formal degree conjecture is a reformulation of classical results
of Harish-Chandra) it is shown that $c_{\pi_v}=\epsilon(\frac12,\pi_v,\psi_v)$ if $\tilde\pi_v$ is square-integrable.
(Note that in the square-integrable case, matrix coefficients are integrable on $N'(F_v)$ and no regularization is necessary.) We conclude:

\begin{corollary}
Conjecture~\ref{conj: metplectic global} holds if $F$ is totally real and $\tilde\pi_\infty$ is discrete series.
\end{corollary}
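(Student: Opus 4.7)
The plan is to combine the two main theorems of the excerpt (Theorem~\ref{thm: metplectic global} and Theorem~\ref{thm: intromain}) with the archimedean identification alluded to in the paragraph preceding the corollary, and then to dispose of the resulting product of local constants by a short global sign argument. Starting from \eqref{eq: globalidentity2}, the only obstruction to recovering the identity \eqref{eq: globalidentity} asserted by Conjecture~\ref{conj: metplectic global} is the factor $\prod_{v\in S}c_{\pi_v}^{-1}$, so I would concentrate on showing that this product equals $1$.

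First I would identify every $c_{\pi_v}$ with a local root number. Since $F$ is totally real, every archimedean place $v$ is real, and the assumption that $\tilde\pi_\infty$ is a discrete series means $\tilde\pi_v$ is square-integrable at each such $v$. Under this hypothesis the excerpt already records $c_{\pi_v}=\epsilon(\tfrac12,\pi_v,\psi_v)$ at the real places, while for the finite $v\in S$ the same equality is provided by Theorem~\ref{thm: intromain}. Hence the identification $c_{\pi_v}=\epsilon(\tfrac12,\pi_v,\psi_v)$ holds uniformly for every $v\in S$.

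Next I would pass from the partial product over $S$ to the complete product over all places. Enlarging $S$ if necessary---permissible because the right-hand side of \eqref{eq: globalidentity2} does not depend on the choice of $S$---I may assume that $\pi_v$ and $\psi_v$ are unramified at every $v\notin S$, in which case $\epsilon(\tfrac12,\pi_v,\psi_v)=1$ by the standard computation of the unramified $\epsilon$-factor. Multiplying over all places then yields
\[
\prod_{v\in S}c_{\pi_v}=\prod_{v\in S}\epsilon(\tfrac12,\pi_v,\psi_v)=\prod_{v}\epsilon(\tfrac12,\pi_v,\psi_v)=\epsilon(\tfrac12,\pi).
\]

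It remains to verify $\epsilon(\tfrac12,\pi)=1$. Since each $\pi_i$ satisfies the exterior-square pole condition at $s=1$, it is self-dual of symplectic type, so $\pi=\pi_1\boxplus\dots\boxplus\pi_k$ is self-dual and $\epsilon(\tfrac12,\pi)\in\{\pm1\}$. If this sign were $-1$, the functional equation would force $L(\tfrac12,\pi)=0$, contradicting the descent hypothesis $L^S(\tfrac12,\pi_i)\ne0$ for every $i$. Therefore $\epsilon(\tfrac12,\pi)=+1$, the product $\prod_{v\in S}c_{\pi_v}^{-1}$ equals $1$, and \eqref{eq: globalidentity2} reduces to \eqref{eq: globalidentity}. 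There is no genuine obstacle at the level of the corollary itself: the substantive input consists of Theorem~\ref{thm: intromain} and the cited real-case computation, and what remains is routine bookkeeping with local and global $\epsilon$-factors.
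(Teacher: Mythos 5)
Your proposal is correct and follows exactly the route the paper intends (the paper leaves the proof implicit after the paragraph preceding the corollary): identify $c_{\pi_v}$ with $\epsilon(\frac12,\pi_v,\psi_v)$ at the finite places via Theorem \ref{thm: intromain} and at the real places via the square-integrable case from \cite{1404.2909} (using that $F$ totally real and $\tilde\pi_\infty$ discrete series), then observe that $\prod_{v\in S}\epsilon(\frac12,\pi_v,\psi_v)=\epsilon(\frac12,\pi)=1$ because each self-dual symplectic-type $\pi_i$ has $L^S(\frac12,\pi_i)\neq0$, so \eqref{eq: globalidentity2} reduces to \eqref{eq: globalidentity}. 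The only cosmetic slip is the justification for enlarging $S$ (the right-hand side of \eqref{eq: globalidentity2} does depend on $S$; the correct point is that the $\epsilon$-factors, and hence the $c_{\pi_v}$, at the added unramified places equal $1$), which does not affect the argument.
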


Theorem \ref{thm: intromain} is the culmination of the series of papers \cite{MR3220931}, \cite{1401.0198} and \cite{MR3431601}.
More precisely, the theorem can be formulated as an identity -- the Main Identity \eqref{eq: MI'} explicated in
\S\ref{sec: firstred}, (based on \cite{1401.0198}) between integrals of Whittaker functions in the induced spaces of $\pi$ (in a local setting).
In principle, formal manipulations using the functional equations of \cite{MR3220931} reduce the identity to the results of \cite{MR3431601}.
Such an argument was described heuristically for the case $n=1$ in \cite[\S7]{1401.0198}.
However, making this rigorous (even in the case $n=1$ and for $\pi$ supercuspidal) seems non-trivial because the integrals
only converge as iterated integrals. This is the main task of the present paper.

In \S\ref{sec: firstred} we reduce the theorem to the cases where $\pi$ is tempered and satisfies some good properties.
Here we rely on the classification, due to Matringe \cite{MR3430877}, of the generic representations admitting a non-trivial
$\GL_n\times\GL_n$-invariant functionals.
We also use a globalization result (\cite[Appendix A]{1404.2909}) which is based on a result of Sakellaridis-Venkatesh \cite{1203.0039}.

The rest of the argument is purely local.
In \S\ref{sec: preliminary} we start the manipulation of the left-hand side of the Main Identity.
It is technically important to restrict oneself to certain special sections in the induced space.
This is possible by a nonvanishing result on the Bessel function of generic representations proved in Appendix \ref{sec: nonvanishing}.
Another useful idea is to write the left-hand side of the Main Identity (for $W$ special) as
$\Bil(W,M(\pi,\frac12)\alt{W},\frac12)$ where
$\Bil(W,\d{W},s)$, $s\in\C$ is an analytic family of bilinear forms on $I(\pi,s)\times I(\d\pi,-s)$, and $M(\pi,s)$ is the intertwining operator on $I(\pi,s)$.
This relies on results of Baruch \cite{MR2192818}, generalized to the present context in \cite{MR3021791}.

The reason for introducing this analytic family is that because of convergence issues, we can only apply the functional equations of \cite{MR3220931}
for $\Re s\ll0$. This is the most delicate step, which is described in \S\ref{sec: FEnewBil}. It entails a further restriction on $\alt{W}$
(which is fortunately harmless for our purpose). The upshot is an expression (for special $W$ and $\alt{W}$ and for $\Re s\ll0$)
\[
\Bil(W,M(\pi,s)\alt{W},s)=\int\newhalf(M^*_sW,t)\newhalfd(\alt{W}_s,t)\,\frac{dt}{\abs{\det t}}
\]
where $t$ is integrated over a certain $n$-dimensional torus and $\newhalf$ is a certain integral of $W$.
The restriction on $\alt{W}$ ensures that the integrand is compactly supported.
Moreover, by \cite{MR3431601}, as a distribution in $t$, $\newhalf(W_s,t)$ extends to an entire function on $s$.
Therefore the above identity is meaningful for all $s$ where $M(\pi,s)$ is holomorphic.
Specializing to $s=\frac12$, the remaining assertions are that $\newhalf(M^*W,t)$
is constant in $t$ (and in particular, is a function) whose value can be explicated, while the integral of
$\newhalfd(\alt{W}_{\frac12},t)$ factors through $M(\pi,\frac12)\alt{W}$, again in an explicit way.
This is the content of Corollaries \ref{cor: tindep} and \ref{cor: Mfactor} respectively, which
follow from the representation-theoretic results established in \cite{MR3431601}.
We refer the reader to \S\ref{sec: nsketch} below for a more detailed sktech of the proof.

\subsection{Acknowledgement}
Part of this work was done while the authors were hosted by the Erwin Schr\"odinger Institute in Vienna
for the program ``Research in Teams''.
We are grateful to the ESI, and especially Joachim Schwermer, for their hospitality.
We also thank Atsushi Ichino and Marko Tadi\'c for helpful correspondence.

\section{Notation and preliminaries} \label{sec: notation}
For the convenience of the reader we introduce in this section the most common notation
that will be used throughout.

We fix a positive integer $\rkn$  (not to be confused with a running variable $n$). In the following $m$ is either $\rkn$ or $2\rkn$.

Let $F$ be a local field of characteristic $0$.

\subsection{Groups, homomorphisms and group elements} \label{sec: elements}

All algebraic groups are defined over $F$.
We typically denote algebraic varieties (or groups) over $F$ by boldface letters (e.g., $\alg{X}$) and denote their set (or group) of $F$-points
by the corresponding plain letter (e.g., $X$). (In most cases $\alg{X}$ will be clear from the context.)

\begin{itemize}

\item $I_m$ is the identity matrix in $\GL_m$, $w_m$ is the $m\times m$-matrix with ones on
the non-principal diagonal and zeros elsewhere.

\item For any group $Q$, $Z_Q$ is the center of $Q$; $e$ is the identity element of $Q$.
We denote the modulus function of $Q$ (i.e., the quotient of a right Haar measure by a left Haar measure) by $\modulus_Q$.

\item $\Mat_m$ is the vector space of $m\times m$ matrices over $F$.

\item $x\mapsto x^t$ is the transpose on $\Mat_m$; $x\mapsto \startran{x}$ is the twisted transpose map on $\Mat_m$
given by $\startran{x}=w_m x^t w_m$; $g\mapsto g^*$ is the outer automorphism of $\GL_m$ given by $g^*=w_m^{-1}\,(g^t)^{-1} w_m$.

\item $\symspace_m=\{x\in\Mat_m:\startran{x}=x\}$.

\item $\GLnn=\GL_{2{\rkn}}$, $\GLn=\GL_{\rkn}$.

\item $G=\Sp_{2{\rkn}}=\{g\in\GL_{4{\rkn}}:\, g^t\sm{}{w_{2\rkn}}{-w_{2\rkn}}{}g=\sm{}{w_{2\rkn}}{-w_{2\rkn}}{}\}$.

\item $G'=\Sp_{\rkn}=\{g\in\GL_{2{\rkn}}:\,g^t\sm{}{w_\rkn}{-w_\rkn}{}g=\sm{}{w_\rkn}{-w_\rkn}{}\}$.

\item $G'$ is embedded as a subgroup of $G$ via $g\mapsto\toG(g)=\diag(I_{\rkn},g,I_{\rkn})$.

\item $P=\Levi\ltimes U$ (resp., $P'=\Levi'\ltimes U'$) is the Siegel parabolic subgroup of $G$ (resp., $G'$), with its standard Levi decomposition.

\item $\bar P=P^t$ is the opposite parabolic of $P$, with unipotent radical $\bar U=U^t$.

\item We use the isomorphism $\levi(g)=\diag(g,g^*)$ to identify $\GLnn$ with $\Levi\subset G$.
Similarly for $\levi':\GLn\rightarrow\Levi'\subset G'$.

\item We use the embeddings $\toM(g)=\diag(g,I_{\rkn})$ and $\toMd(g)=\diag(I_{\rkn},g)$ to identify $\GLn$ with subgroups of $\GLnn$.
We also set $\toLevi=\levi\circ\toM$ and $\toLevid=\levi\circ\toMd=\toG\circ\levi'$.

\item $K$ is the standard maximal compact subgroup of $G$. (In the $p$-adic case it consists of the matrices with integral entries.)

\item $N$  is the standard maximal unipotent subgroup of $G$  consisting of upper unitriangular matrices;
$T$  is the maximal torus of $G$  consisting of diagonal matrices;
$B=T\ltimes N$ is the Borel subgroup of $G$.

\item For any subgroup $X$ of $G$ we write $X'=\toG^{-1}(X)$, $X_\Levi=X\cap\Levi$ and $X_\GLnn=\levi^{-1}(X_\Levi)$;
similarly $X'_{\Levi'}=X'\cap \Levi'$ and $X'_\GLn=\levi'^{-1}(X'_{\Levi'})$.

\item $\toU_\GLnn:\Mat_\rkn\rightarrow N_\GLnn$ is the group embedding given by $\toU_\GLnn(x)=\sm{I_{{\rkn}}}{x}{}{I_{{\rkn}}}$
and $\toU_\Levi=\levi\circ\toU_\GLnn$.

\item $\toU:\symspace_{2{\rkn}}\rightarrow U$ is the isomorphism given by $\toU(x)=\sm{I_{2{\rkn}}}{x}{}{I_{2{\rkn}}}$.

\item $\tilde G=\Mp_{\rkn}$ is the metaplectic group, i.e., the non-trivial two-fold cover of $G'$ (unless $F$ is complex).
We write elements of $\tilde G$ as pairs $(g,\epsilon)$, $g\in G$, $\epsilon=\pm1$
where the multiplication is given by Rao's cocycle. (Cf. \cite{MR2848523}.)

\item When $g\in G'$, we write $\animg{g}=(g,1)\in\tilde G$. Of course, $g\mapsto\animg{g}$ is not a group homomorphism.
However, we do have $\animg{gn}=\animg{g}\animg{n}$ and $\animg{ng}=\animg{n}\animg{g}$ for any $g\in G'$, $n\in N'$.

\item $\tilde N$  (resp., $\tilde P$) is the inverse image of $N'$ (resp., $P'$) under the canonical projection $\tilde G\rightarrow G'$.
We will identify $N'$ with a subgroup of $\tilde N$ via $n\mapsto\animg{n}$.

\item $\xi_m=(0,\ldots,0,1)\in F^m$.

\item $\mira$ is the mirabolic subgroup of $\GLnn$ consisting of the elements $g$ such that $\xi_{2{\rkn}} g=\xi_{2{\rkn}}$.

\item $E=\diag(1,-1,\dots,1,-1)\in \GLnn$. $H$ is the centralizer of $\levi(E)$ in $G$. It is isomorphic to $\Sp_{\rkn}\times\Sp_{\rkn}$.

\item $H_\GLnn$ is then the centralizer of $E$ in $\GLnn$. It is isomorphic to $\GL_{\rkn}\times\GL_{\rkn}$.

\item $w_0'=\sm{}{w_{\rkn}}{-w_{\rkn}}{}\in G'$ represents the longest Weyl element of $G'$.

\item $w_U=\sm{}{I_{2{\rkn}}}{-I_{2{\rkn}}}{}\in G$ represents the longest $\Levi$-reduced Weyl element of $G$.

\item $w'_{U'}=\sm{}{I_{\rkn}}{-I_{\rkn}}{}\in G'$ represents the longest $\Levi'$-reduced Weyl element of $G'$.

\item $\wnn=w_{2{\rkn}}\in \GLnn$ represents the longest Weyl element of $\GLnn$; $\wnnM=\levi(\wnn)$.

\item $\wn=w_{\rkn}\in\GLn$ represents the longest Weyl element of $\GLn$; $\wnM=\levi'(\wn)$.

\item $w_{2{\rkn},{\rkn}}=\sm{}{I_{\rkn}}{I_{\rkn}}{}\in\GLnn$, $w_{2{\rkn},{\rkn}}'=\sm{}{I_{\rkn}}{\wn}{}\in\GLnn$.

\item $\gamma=w_U\toG(w'_{U'})^{-1}=\left(\begin{smallmatrix}&I_{\rkn}&&\\&&&I_{\rkn}\\-I_{\rkn}&&&\\&&I_{\rkn}&\end{smallmatrix}\right)\in G$.

\item $\ddg=\diag(1,-1,\ldots,(-1)^{\rkn-1})\in\Mat_\rkn$,
$\auxthree=\toU_\Levi((-1)^\rkn\ddg)\in N_\Levi$,
$\auxauxone=\toU_\GLnn(\ddg)\in N_\GLnn$,
$\auxone=w_{2\rkn,\rkn}'\auxauxone\in\GLnn$,
$\auxtwo=\toU_{\GLnn}(-\frac12\ddg\wn)\in N_\GLnn$.

\item $V$ (resp., $\altV$) is the unipotent radical of the standard parabolic subgroup of $G$ with Levi
$\GL_1^{\rkn}\times\Sp_{\rkn}$ (resp., $\GL_1^{{\rkn}-1}\times\Sp_{{\rkn}+1}$).
Thus, $N=\toG(N')\ltimes V$, $\altV$ is normal in $V$ and $V/\altV$ is isomorphic to the Heisenberg group
of dimension $2{\rkn}+1$ (see below). Also $V=V_\Levi\ltimes V_U$ where $V_U=V\cap U=
\{\toU(\sm{x}{y}{}{\startran{x}}):\ x\in \Mat_\rkn, y\in \symspace_{\rkn}\}$.

\item $\Vm=\altV_{\Levi}\ltimes V_U$. (Recall $\altV_{\Levi}=\altV\cap\Levi$ by our convention.)

\item $V_\gamma=V\cap\gamma^{-1}N\gamma=\toG(w'_{U'})V_{\Levi}\toG(w'_{U'})^{-1}=
\toLevi(N'_\GLn)\ltimes\{\toU(\sm{x}{}{}{\startran{x}}):x\in\Mat_\rkn\}\subset\Vm$.

\item $\Vp\subset V$ is the image under $\toU_\Levi$ of the space of ${\rkn}\times {\rkn}$-matrices
whose rows are zero except possibly for the last one.  Thus, $V=\Vp\ltimes \Vm$.
For $c=\toU_\Levi(x)\in \Vp$ we denote by $\underline c\in F^{\rkn}$ the last row of $x$.

\item $\kgrp=\Vm\rtimes\toG(N')$. It is the stabilizer in $N$ of the character $\psi_U$ defined below.

\item $\vf=(\kgrp_\GLnn)^*$.

\item $\zerocol$ is the subspace of $\Mat_\rkn$ consisting of the matrices whose first column is zero.

\item $\vrbar=\{\sm{I_{\rkn}}{}{x}{n^t}:x\in \zerocol, n\in N'_{\GLn}\}.$

\item $\soment=Z_\GLnn\times \toMd(T'_{\GLn})=\{\diag(t_1,\dots,t_{2\rkn}):t_1=\dots=t_{\rkn}\}\subset T_\GLnn$.

\end{itemize}

\subsection{Characters} \label{sec: characters}
We fix a non-trivial unitary character $\psi$ of $F$.
For each of the unipotent groups $X$ listed below we assign a character $\psi_X$ of $X$ as follows
\begin{gather*}
\psi_{N_\GLnn}(u)=\psi(u_{1,2}+\dots+u_{2{\rkn}-1,2{\rkn}})\text{ (non-degenerate)}\\
\psi_{N_\Levi}\circ\levi=\psi_{N_\GLnn}\\
\psi_{N'_\GLn}(u')=\psi(u'_{1,2}+\dots+u'_{{\rkn}-1,{\rkn}})\text{ (non-degenerate)}\\
\psi_{N'_{\Levi'}}\circ\levi'=\psi_{N'_{\GLn}}\text{ i.e., }\psi_{N'_{\Levi'}}(n)=\psi_{N_\Levi}(\gamma \toG(n)\gamma^{-1})\\
\psi_{N'}(nu)=\psi_{N'_{\Levi'}}(n)\psi(\frac12u_{{\rkn},{\rkn}+1})^{-1}, n\in N'_{\Levi'},\,u\in U'\\
\psi_{\tilde N}\text{ is the extension of $\psi_{N'}$ to a genuine character of $\tilde N$}\\
\psi_N(nu)=\psi_{N_\Levi}(n),\ \ \ n\in N_\Levi, u\in U\text{ (a degenerate character)}\\
\psi_U(\toU(v))=\psi(\frac12(v_{{\rkn},{\rkn}+1}-v_{2{\rkn},1}))\\
\psi_{\Vm}(vu)=\psi_{N_\Levi}(v)^{-1}\psi_U(u),\ \ v\in \altV_{\Levi}, u\in V_U\\
\psi_{\kgrp}(\toG(n)v)=\psi_{N'}(n)\psi_{\Vm}(v), \ \ n\in N',v\in\Vm\\
\psi_{\kgrp_\GLnn}=\psi_{\kgrp}\circ\levi\\
\psi_{\vf}(m)=\psi_{\kgrp_\GLnn}(m^*),\ m\in\vf\\
\psi_{\bar U}(\toU(v)^t)=\psi(v_{1,1}),\ v\in \symspace_{2\rkn}.
\end{gather*}

\subsection{Other notation} \label{sec: othernotation}
\begin{itemize}
\item We use the notation $a\ll_d b$ to mean that $a\leq cb$ with $c>0$ a constant depending on $d$.

\item For any $g\in G$ define $\wgtt{g}\in\R_{>0}$ by $\wgtt{u\levi(m)k}=\abs{\det m}$
for any $u\in U$, $m\in\GLnn$, $k\in K$. Let $\wgt{g}=\wgtt{\toG(g)}$ for $g\in G'$.

\item $\csgr(Q)$ is the set of compact open subgroups of a topological group $Q$.

\item $\csgr^s(G')$ is the subset of $\csgr(G')$ consisting of the $K_0$'s for which
$k\in K_0\mapsto \animg{k}$ is an isomorphism of groups. We identify any $K_0\in\csgr^s(G')$
with its image under $k\mapsto \animg{k}$ (an element of $\csgr(\tilde G)$).
Any sufficiently small $K_0\in\csgr(G')$ belongs to $\csgr^s(G')$.

\item For an $\ell$-group $Q$ let $C(Q)$ (resp., $\swz(Q)$) be the space of continuous (resp., Schwartz) functions on $Q$ respectively.

\item When $F$ is $p$-adic, if $Q'$ is a closed subgroup of $Q$ and $\chi$ is a character of $Q'$, we denote by $C(Q'\bs Q,\chi)$
(resp., $C^{\smth}(Q'\bs Q,\chi)$,  $\swrz(Q'\bs Q,\chi)$)  the spaces of
continuous (resp. $Q$-smooth\footnote{i.e., right-invariant under an open subgroup of $Q$},
smooth and compactly supported modulo $Q'$)
complex-valued left $(Q',\chi)$-equivariant functions on $Q$.

\item
For an $\ell$-group $Q$ we write $\Irr Q$ for the set of equivalence classes of irreducible representations of $Q$.
If $Q$ is reductive, we also write $\Irr_{\sqr}Q$ and $\Irr_{\temp}Q$ for the subsets of irreducible unitary square-integrable
(modulo center) and tempered representations respectively. We write $\Irr_{\gen}\GLnn$ and $\Irr_{\meta}\GLnn$
for the subset of irreducible generic representations of $\GLnn$ and representations of metaplectic type (see below), respectively.
For the set of irreducible generic representations of $\tilde G$ we use the notation $\Irr_{\gen,\psi_{\tilde N}}\tilde G$ to emphasize the dependence
on the character $\psi_{\tilde N}$.

\item For $\pi\in \Irr Q$, let $\d\pi$ be the contragredient of $\pi$.

\item For $\pi\in\Irr_{\gen}\GLnn$, $\WhitM(\pi)$ denotes the (uniquely
determined) Whittaker space of $\pi$ with respect to the character $\psi_{N_\GLnn}$. Similarly we use the notation
$\WhitMd$, $\WhitML$, $\WhitMLd$, $\WhitG$, $\WhitGd$.

\item For $\pi\in\Irr_{\gen}\Levi$ let $\Ind(\WhitML(\pi))$ be the space of smooth left $U$-invariant functions $W:G\rightarrow\C$ such that
for all $g\in G$, the function $m\mapsto\modulus_P(m)^{-\frac12}W(mg)$ on $\Levi$ belongs to $\WhitML(\pi)$. Similarly define $\Ind(\WhitMLd(\pi))$.

\item If a group $G_0$ acts on a vector space $W$ and $H_0$ is a subgroup of $G_0$,
we denote by $W^{H_0}$ the subspace of $H_0$-fixed points.

\item We use the following bracket notation for iterated integrals:
$\iint \,( \iint\,\ldots)\ldots$ implies that the inner integrals converge
as a double integral and after evaluating them,
the outer double integral is absolutely convergent.
\end{itemize}

\subsection{Measures}\label{sec: chevalley}
We take the self-dual Haar measure on $F$ with respect to $\psi$.
We use the following convention for Haar measures for algebraic subgroups of $G$.
(We consider $G'$, $\GLnn$, $\GLn$ as subgroups of $G$ through the embeddings $\toG$, $\levi$ and $\toG\circ\levi'$.)

When $F$ is a $p$-adic field, let $\OO$ be its ring of integers.
The Lie algebra $\mathfrak{M}$ of $\GL_{4\rkn}$ consists of the $4\rkn\times 4\rkn$-matrices $X$ over $F$.
Let $\mathfrak{M}_{\OO}$ be the lattice of integral matrices in $\mathfrak{M}$.
For any algebraic subgroup $\alg{Q}$ of $\GL_{4\rkn}$ defined over $F$
(e.g., an algebraic subgroup of $\alg{G}$)
let $\mathfrak{q}\subset\mathfrak{M}$ be the Lie algebra of $\alg{Q}$.
The lattice $\mathfrak{q}\cap \mathfrak{M}_{\OO}$ of $\mathfrak{q}$ gives rise to a gauge form of $\alg{Q}$
(determined up to multiplication by an element of $\OO^*$) and we use it to define a Haar measure on $Q$ by the recipe of \cite{MR0217077}.

When $F=\R$, the measures are fixed similarly, except that we use $\mathfrak{M}_{\Z}$ in place of $\mathfrak{M}_{\OO}$.
When $F=\C$ we will only consider subgroups of $G$ which are defined over $\R$ and take the gauge forms induced from the above
recipe for $\R$.

\subsection{Weil representation} \label{sec: Weil}
Let $\symp$ be a symplectic space over $F$ with a symplectic form $\sprod{\cdot}{\cdot}$.
Let $\Heise=\Heise_\symp$ be the Heisenberg group of $(\symp,\sprod{\cdot}{\cdot})$.
Recall that $\Heise_\symp=\symp\oplus F$ with the product rule
\[
(x,t)\cdot (y,z)=(x+y,t+z+\frac12\sprod xy).
\]
Fix a polarization $\symp=\symp_+\dsum \symp_-$. The group $\Sp(\symp)$ acts on the right on $\symp$.
We write a typical element of $\Sp(\symp)$ as
$\sm ABCD$ where $A\in \Hom(\symp_+,\symp_+)$, $B\in \Hom(\symp_+,\symp_-)$, $C\in \Hom(\symp_-,\symp_+)$ and $D\in \Hom (\symp_-,\symp_-)$.
Let $\Mp(\symp)$ be the metaplectic two-fold cover of $\Sp(\symp)$ with respect to the Rao cocycle determined
by the splitting.
Consider the Weil representation $\wev$ of the group $\Heise_\symp\rtimes\Mp(\symp)$ on $\swz(\symp_+)$.
Explicitly, for any $\Phi\in\swz(\symp_+)$ and $X\in \symp_+$ the action of $\Heise_\symp$ is given by
\begin{subequations}
\begin{align}
\label{eq: weilH1} \wev(a,0)\Phi(X)&=\Phi(X+a), \ \ a\in \symp_+,\\
\label{eq: weilH2} \wev(b,0)\Phi(X)&=\psi(\sprod Xb)\Phi(X),\ \ b\in \symp_-,\\
\label{eq: weilH3} \wev(0,t)\Phi(X)&=\psi(t)\Phi(X), \ \ t\in F,
\end{align}
\end{subequations}
while the action of $\Mp(\symp)$ is (partially) given by
\begin{subequations}
\begin{align}
\wev(\sm g00{g^*},\epsilon)\Phi(X)&=\epsilon\weilfctr(\det g)\abs{\det g}^{\frac12}\Phi(X g), \ \ g\in\GL(\symp_+),\\
\wev(\sm IB0I,\epsilon)\Phi(X)&=\epsilon\psi(\frac12\sprod{X}{XB})\Phi(X),\ \ B\in\Hom(\symp_+,\symp_-)\text{ self-dual},
\end{align}
\end{subequations}
where $\weilfctr$ is Weil's factor.

We now take $\symp=F^{2{\rkn}}$ with the standard symplectic form
\[
\sprod{(x_1,\dots,x_{2{\rkn}})}{(y_1,\dots,y_{2{\rkn}})}=\sum_{i=1}^{\rkn}x_iy_{2{\rkn}+1-i}-\sum_{i=1}^{\rkn}
y_ix_{2{\rkn}+1-i}
\]
and the standard polarization $\symp_+=\{(x_1,\dots,x_{\rkn},0,\dots,0)\}$, $\symp_-=\{(0,\dots,0,y_1,\dots,y_{\rkn})\}$.
(We identify $\symp_+$ and $\symp_-$ with $F^{\rkn}$.)
The corresponding Heisenberg group is isomorphic to the quotient $V/\altV$
(with $V$, $\altV$ as defined in \S\ref{sec: elements})
via $v\mapsto\Hei{v}:=((v_{{\rkn},{\rkn}+j})_{j=1,\dots,2{\rkn}},\frac12v_{{\rkn},3{\rkn}+1})$.

For $X=(x_1,\dots,x_{\rkn}),X'=(x_1',\dots,x_{\rkn}')\in F^{\rkn}$ define
\[
\sprod X{X'}'=x_1x'_{\rkn}+\dots+x_{\rkn}x'_1.
\]
For $\Phi\in\swz(F^{\rkn})$ define the Fourier transform
\[
\hat\Phi(X)=\int_{F^{\rkn}}\Phi(X')\psi(\sprod{X}{X'}')\ dX'.
\]
Then, realized on $\swz(F^{\rkn})$, the Weil representation satisfies
\begin{subequations}
\begin{align}
\label{eq: weil1} \wev(\animg{\levi'(h)})\Phi(X)&=\abs{\det(h)}^{\frac{1}{2}}
\beta_{\psi}(\levi'(h))\Phi(Xh),\ \ h\in\GLn,\\
\label{eq: weil2} \wev(\animg{\sm1B{}1})\Phi(X)&=\psi(\frac12\sprod{X}{XB}')\Phi(X),\ \ \ B\in\symspace_{\rkn},\\
\label{eq: weil3} \wev(\twU)\Phi(X)&=\beta_{\psi}(w'_{U'})\hat\Phi(X).
\end{align}
\end{subequations}
Here $\beta_\psi(g)$, $g\in G'$ are certain roots of unity; moreover $\beta_{\psi}(\levi'(h))=\weilfctr(\det h)$.

We extend $\weil_{\psi}$ to $V\rtimes\tilde G$ by setting
\begin{equation}\label{eq: weilext}
\wev(v \animg g)\Phi=\psi(v_{1,2}+\dots+v_{{\rkn}-1,{\rkn}})^{-1}\wev(\Hei{v})(\wev(\animg g)\Phi),\,\,v\in V,\,\,g\in G'.
\end{equation}
Then for any $g\in G'$, $v\in V$ we have
\begin{equation}\label{eq: weilext2}
\wev((\toG(g)v\toG(g)^{-1})\animg g)\Phi=\wev(\animg g)(\wev(v)\Phi).
\end{equation}

\subsection{Stable integral}
For the rest of the section, we assume $F$ is $p$-adic.

Suppose that $U_0$ is a unipotent group over $F$ with a fixed Haar measure $du$.
Recall that the group generated by a relatively compact subset of $U_0$ is relatively compact.
In particular, the set $\csgr(U_0)$ is directed. Recall the following definition of stable integral in \cite{MR3267120}.

\label{sec: stint}
\begin{definition} \label{def: stable integral}
Let $f$ be a smooth function on $U_0$.
We say that $f$ has a \emph{stable integral} over $U_0$ if there exists $U_1\in\csgr(U_0)$ such that for any
$U_2\in\csgr(U_0)$ containing $U_1$ we have
\begin{equation} \label{eq: comval}
\int_{U_2}f(u)\ du=\int_{U_1}f(u)\ du.
\end{equation}
In this case we write $\stint_{U_0} f(u)\ du$ for the common value \eqref{eq: comval} and say that
$\stint_{U_0} f(u)\ du$ stabilizes at $U_1$.
In other words, $\stint_{U_0} f(u)\ du$ is the limit of the net $(\int_{U_1}f(u)\ du)_{U_1\in\csgr(U_0)}$
with respect to the discrete topology of $\C$.
\end{definition}

Given a family of functions $f_x\in C^{\smth}(U_0)$, we say that the integral $\stint_{U_0}f_x(u)\ du$ \emph{stabilizes uniformly in $x$}
if $U_1$ as above can be chosen independently of $x$. Similarly, if $x$ ranges over a topological space $X$ then we say that
$\stint_{U_0}f_x(u)\ du$ stabilizes locally uniformly in $x$ if any $y\in X$ admits a neighborhood on which $\stint_{U_0}f_x(u)\ du$
stabilizes uniformly.

\subsection{A remark on convergence}
Frequently, we will make use of the following elementary remark.
\begin{remark} \label{rem: partint}
Let $\bf H$ be any algebraic group over $F$ and $\bf H'$ a closed subgroup.
Assume that $\modulus_H\rest_{H'}\equiv\modulus_{H'}$. Suppose that $f\in C^{\smth}(H)$ and that the integral
$\int_H f(h)\ dh$ converges absolutely. Then the same is true for $\int_{H'}f(h')\ dh'$.
\end{remark}

\section{Statement of main result}

\subsection{Local Fourier--Jacobi transform}
For any $f\in C(G)$ and $s\in\C$ define $f_s(g)=f(g)\wgtt{g}^s$, $g\in G$.
Let $\pi\in\Irr_{\gen}\Levi$
with Whittaker model $\WhitML(\pi)$.
Let $\Ind(\WhitML(\pi))$ be the space of $G$-smooth left $U$-invariant functions $W:G\rightarrow\C$ such that
for all $g\in G$, the function $\modulus_P(m)^{-\frac12}W(mg)$ on $\Levi$ belongs to $\WhitML(\pi)$.
For any $s\in\C$ we have a representation $\Ind(\WhitML(\pi),s)$ on the space $\Ind(\WhitML(\pi))$ given by
$(I(s,g)W)_s(x)=W_s(xg)$, $x,g\in G$. It is equivalent to the induced representation of $\pi\otimes\nu^s$ from $P$ to $G$.
The family $W_s$, $s\in \C$ is a holomorphic section of this family of induced representations.

Let $F$ be a $p$-adic field. Following \cite{MR1675971}, for any $W\in C^{\smth}(N\bs G,\psi_N)$ and $\Phi\in \swz(F^{\rkn})$ define a genuine function on $\tilde G$:
\begin{equation}\label{eq: defwhitform}
\whitform(W,\Phi,\animg g)=\int_{V_\gamma\bs V} W(\gamma v \toG(g))\wevinv(v\animg g)\Phi(\xi_{\rkn})\,dv,\,\,\,g\in G'
\end{equation}
where the element $\gamma\in G$ and the groups $V$ and $V_\gamma$ were defined in \S\ref{sec: elements}.

Its properties were studied in \cite[\S4]{1401.0198}.
In particular, the integrand is always compactly supported and $\whitform$ gives rise to a $V\rtimes\tilde G$-intertwining map
\begin{equation} \label{eq: whitformequ}
\whitform:C^{\smth}(N\bs G,\psi_N)\otimes\swz(F^{\rkn})\rightarrow C^{\smth}(N'\bs\tilde G,\psi_{\tilde N})
\end{equation}
where $V\rtimes\tilde G$ acts via $V\rtimes\toG(G')$ by right translation on $C^{\smth}(N\bs G,\psi_N)$, through $\wevinv$ on $\swz(F^{\rkn})$
and via the projection to $\tilde G$ by right translation on $C^{\smth}(N'\bs\tilde G,\psi_{\tilde N})$.

Let $\Vp\subset V$ be the image under $\toU_\Levi$ of the space of ${\rkn}\times {\rkn}$-matrices whose rows are zero except possibly for the last one.
For $c=\toU_\Levi(x)\in \Vp$ we denote by $\underline c\in F^{\rkn}$ the last row of $x$.
Then
\[
\whitform(W(\cdot c),\Phi(\cdot+\underline{c}),\animg{g})=\whitform(W,\Phi,\animg{g}),\ \ c\in \Vp, g\in G'.
\]
It follows that the function $\whitform(W,\Phi,\cdot)$ factors through $W\otimes\Phi\mapsto\Phi*W$
where for any function $f\in C^{\infty}(G)$ we set
\[
\Phi*f(g)=\int_{\Vp} f(g c)\Phi(\underline{c}) \,dc.
\]
We will denote by $\nwhitform$ the map
\[
\nwhitform:C^{\smth}(N\bs G,\psi_N)\rightarrow C^{\smth}(\tilde N\bs\tilde G,\psi_{\tilde N})
\]
such that
\[
\whitform(W,\Phi,\cdot)=\nwhitform(\Phi*W,\cdot).
\]
(Informally, $\nwhitform(W,\cdot)=\whitform(W,\delta_0,\cdot)$ where $\delta_0$ is the delta function at $0$.)
The map $\nwhitform$ is no longer $\tilde G$ or $V$-equivariant since neither $\tilde G$ nor $V$ (acting through $\wevinv$) stabilize $\delta_0$.
However, $\nwhitform$ satisfies the following equivariance property.
Let $\Vm=V_U\rtimes \altV_{\Levi}$ where $V_U=V\cap U$ and $\altV_{\Levi}\subset V_{\Levi}$ is defined in \S\ref{sec: elements}.
Thus, $\Vm$ is the preimage of $\symp_-$ under the composition $V\rightarrow V/\altV\simeq\Heise\rightarrow\symp$ and we have $V=\Vp\ltimes \Vm$.
Note that $\Vm$ is normalized by $P'$.
Let $\psi_{\Vm}$ be the character on $\Vm$ given by
\[
\psi_{\Vm}(vu)=\psi_{N_\Levi}(v)^{-1}\psi_U(u),\ \ v\in \altV_{\Levi}, u\in V_U.
\]

\begin{lemma} \label{lem: equnwhit}
For any $v\in \Vm$, $p=mu\in P'$ where $m\in M'$ and $u\in U'$, we have
\[
\nwhitform(W(\cdot v\toG(p)),\animg{g})=
\wgt{m}^{\frac12}\beta_{\psi^{-1}}(m)^{-1}\psi_{\Vm}(v)\nwhitform(W,\animg{g}\animg{p}).
\]
\end{lemma}

\begin{proof}
The statement simply reflects the fact that $\Vm\rtimes\tilde P$ acts on $\delta_0$ by multiplication by the character
\[
\animg{m'}\animg{u'}v\mapsto\wgt{m'}^{-\frac12}\beta_{\psi^{-1}}(m')\psi_{\Vm}^{-1}(v),\ \ \ v\in \Vm,m'\in M',u'\in U'.
\]
More rigorously, fix $W$, $p$ and $v$ and let $C$ be a small neighborhood of $0$ in $F^{\rkn}$.
Suppose that $\Phi\in\swrz(F^{\rkn})$ is supported in $C$ and $\int_{F^{\rkn}}\Phi(c)\ dc=1$.
Then $\whitform(W,\Phi,\animg{g})=\nwhitform(W,\animg{g})$ for all $g\in G'$ provided that $C$ is sufficiently small.
On the other hand, if $\Phi'=\wevinv(v\animg{p})\Phi$ then
\[
\whitform(W(\cdot v\toG(p)),\Phi',\animg{g})=\whitform(W,\Phi,\animg{g}\animg{p}).
\]
By \eqref{eq: weilH2}, \eqref{eq: weilH3}, \eqref{eq: weil1}, \eqref{eq: weil2} and \eqref{eq: weilext},
$\operatorname{supp}\Phi'\subset Cm^{-1}$ and (when $C$ is sufficiently small)
\[
\int_{F^{\rkn}}\Phi'(c)\ dc=\psi_{\Vm}(v)^{-1}\beta_{\psi^{-1}}(m)\wgt{m}^{-\frac12}.
\]
Thus, if $C$ is small enough then
\[
\Phi'*W(\cdot vp)=\psi_{\Vm}(v)^{-1}\beta_{\psi^{-1}}(m)\wgt{m}^{-\frac12}W(\cdot vp).
\]
Hence,
\[
\whitform(W(\cdot v\toG(p)),\Phi',\animg{g})=\psi_{\Vm}(v)^{-1}\beta_{\psi^{-1}}(m)\wgt{m}^{-\frac12}\nwhitform(W(\cdot v\toG(p)),\animg{g}).
\]
The lemma follows.
\end{proof}

For later reference we record the following result which is implicit in the proof of \cite[Lemma 4.5]{1401.0198}.

\begin{lemma}\label{lem: support subsym}
Suppose that $F$ is $p$-adic. Then for any $K_0\in\csgr(G)$ there exists $\Omega\in\csgr(V_U)$ such that
for any $W\in C(N\bs G,\psi_N)^{K_0}$ the support of $W(\gamma\cdot)\rest_{\Vm}$ is contained in $V_\gamma\toG(w'_{U'})\Omega\toG(w'_{U'})^{-1}$.
\end{lemma}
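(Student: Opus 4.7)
The plan is to combine the left $(N,\psi_N)$-equivariance and right $K_0$-invariance of $W$ via a Bruhat-type identity, and to conclude by exhibiting, for each coordinate of $v\in\subUbar$ that grows beyond a threshold depending on $K_0$, an element of $N$ that forces $W(\toUbar(v))=0$.

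The starting point is the observation that $\psi_N$ is trivial on $U$, so $W(\toU(z)g)=W(g)$ for every $z\in\symspace_{2\rkn}$ and $g\in G$. Applying this with $g=\toUbar(v)$ and using the identity
\[
\toU(z)\toUbar(v)=\toUbar\bigl(v(I+zv)^{-1}\bigr)\cdot\levi(I+zv)\cdot\toU\bigl((I+zv)^{-1}z\bigr),
\]
valid whenever $I+zv\in\GL_{2\rkn}$, one obtains
\[
W(\toUbar(v))=W\bigl(\toUbar(v(I+zv)^{-1})\cdot\levi(I+zv)\cdot\toU((I+zv)^{-1}z)\bigr).
\]
The goal is to choose $z\in\symspace_{2\rkn}$ so that (i) $vzv=0$ (so that $v(I+zv)^{-1}=v$), (ii) $I+zv\in N_\GLnn$ (upper unitriangular, so $\levi(I+zv)\in N_\Levi$ and $\psi_{N_\Levi}(\levi(I+zv))$ is defined), (iii) the last factor $\toU((I+zv)^{-1}z)$ lies in $K_0$, and (iv) setting $n=I+zv$, the conjugation formula $\levi(n)\toUbar(v)\levi(n)^{-1}=\toUbar(n^*vn^{-1})$ gives $n^*vn^{-1}=v$.

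Under these conditions, one moves $\levi(I+zv)$ to the left of $\toUbar(v)$, applies the left $N_\Levi$-equivariance of $W$, and finally absorbs the trailing $\toU((I+zv)^{-1}z)\in K_0$ on the right, obtaining
\[
W(\toUbar(v))=\psi_{N_\GLnn}(I+zv)\cdot W(\toUbar(v)),
\]
which forces $W(\toUbar(v))=0$ whenever $\psi_{N_\GLnn}(I+zv)\neq1$.

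For each basis coordinate of the parameterization $v=\sm{\startran x}{y}{0}{x}$ by $(x,y)\in R_\rkn\times\symspace_\rkn$, one exhibits a one-parameter family of $z$'s satisfying (i)--(iv) and producing a character of the form $\psi(a\cdot v_0)$, where $v_0$ is the given coordinate and $a\in F$ is the parameter. In the model case $\rkn=1$, the choice $z=\diag(a,a)$ works: a direct calculation yields $I+zv=\sm{1}{ay}{0}{1}\in N_\GLnn$, $vzv=0$, $n^*vn^{-1}=v$, $(I+zv)^{-1}z=\sm{a}{-a^2y}{0}{a}\in K_0\cap\symspace_2$ for $|a|\leq q^{-M/2}|y|^{-1/2}$ ($M$ the level of $K_0$), and $\psi(ay)\neq1$ for $|a|>q^{c_\psi}/|y|$; these ranges for $a$ are compatible precisely once $|y|$ exceeds a threshold depending on $K_0$ and the conductor of $\psi$. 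For general $\rkn$ one takes $z$ of block-diagonal form $\sm{\startran D}{0}{0}{D}$ with $D\in\Mat_{\rkn,\rkn}$ chosen so that $Dx$ is strictly upper triangular with a single nonzero superdiagonal entry, then iterates over a lexicographically ordered basis of the coordinates of $\subUbar$, handling the $y$-coordinates separately via an analogous choice of $z$ with nontrivial top-right block. After each coordinate is bounded, the corresponding $\Omega$-component is fixed and subsequent iterations preserve the prior bounds.

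The principal obstacle is the simultaneous realization of conditions (i)--(iv) in higher rank: the vanishing of $vzv$ and the unit-upper triangularity of $I+zv$ constrain $z$ to lie in a codimension-low subspace of $\symspace_{2\rkn}$ adapted to the specific shape of $v\in\subUbar$, while (iv) demands that the chosen $n$ be a coordinate-wise stabilizer of $v$ under the $*$-conjugation action. The shape $\sm{\startran x}{y}{0}{x}$ with $x\in R_\rkn$ (first column zero, hence $\startran x$ has last row zero) provides exactly the nilpotency needed to make this work root-by-root; verifying the compatibility of the ranges of the parameter $a$ (small enough for (iii), large enough for $\psi_{N_\GLnn}(I+zv)\neq1$) is routine once one tracks the dependence on the level of $K_0$ and the conductor of $\psi$.
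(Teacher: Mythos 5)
Your framework is the standard one for such support statements and the algebra you set up is correct: the factorization $\toU(z)\toUbar(v)=\toUbar(v(I+zv)^{-1})\levi(I+zv)\toU((I+zv)^{-1}z)$ holds, left translation by $\toU(z)$ costs nothing because $\psi_N$ is trivial on $U$, and your condition (iv) is in fact automatic from (i): if $vzv=0$ then $(zv)^2=(vz)^2=0$, so $(I+zv)^{*}=(I+vz)^{-1}=I-vz$ and $(I-vz)v(I-zv)=v$. The rank-one computation is also correct. However, for $\rkn=1$ one has $R_1=0$, so $\subUbar$ is the one-parameter group $\{\toUbar(\sm{0}{y}{0}{0})\}$ and the whole $x$-part of $\subUbar$ is absent; the only case you actually verify carries essentially none of the difficulty of the lemma.

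The gap is the general-rank step, and the specific choices you propose do not meet your own constraints (i)--(ii). Write $v=\sm{\startran x}{y}{0}{x}$ and $z=\sm{\startran d}{b}{c}{d}$ with $b,c\in\symspace_{\rkn}$; then $zv=\sm{\startran d\startran x}{\startran d y+bx}{c\startran x}{cy+dx}$. First, condition (ii) requires not only $dx$ but also $xd$ to be strictly upper triangular (since $\startran d\startran x=\startran{(xd)}$), together with $c\startran x=0$ and $cy+dx$ strictly upper triangular when $c\neq0$; for $d$ a single elementary matrix $t\one_{l-1,k}$ these force the vanishing of certain rows and columns of $x$, which fails for general $x\in R_{\rkn}$. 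Second, condition (i) contains, besides $xdx=0$, the off-diagonal block identity $ydx+\startran{(ydx)}+\cdots=0$, which couples $x$, $y$ and $z$ and is never addressed. Third, the character only sees the superdiagonal of $zv$, whose $(\rkn,\rkn+1)$ entry is $(\startran d y)_{\rkn,1}$ because $(bx)_{\rkn,1}=0$ ($x$ has zero first column); hence a block-diagonal $z$ pairs only with the first column of $y$, and detecting the remaining entries of $y$ forces $c\neq0$, which reinstates the constraints above. The ``lexicographic iteration'' that is supposed to resolve these interferences is asserted rather than designed: in such inductions the choice of ordering and the control of error terms coming from already-bounded but nonzero coordinates (one needs $|t\cdot v_{0}|$ large for the targeted entry while $t$ times the previously bounded entries stays within the level of $K_0$) is exactly where the work lies. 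As written, the argument establishes the lemma only for $\rkn=1$.
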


\begin{remark}
When $F$ is an archimedean field, the above discussion still holds for $W\in \Ind(\WhitML(\pi))$
(or more generally if $W\in C^{\smth}(N\bs G,\psi_N)$ is of moderate growth).
From \cite[Lemma~4.9]{1401.0198}, the integral \eqref{eq: defwhitform} converges.
Moreover $\whitform(W,\Phi,\cdot)$ factors through $W\otimes\Phi\mapsto\Phi*W$, and the induced linear form $\nwhitform$ satisfies the
equivariance property in Lemma~\ref{lem: equnwhit} (by an approximate identity argument).
\end{remark}
\subsection{Explicit local descent}

Let $F$ be a local field of characteristic 0. Define the intertwining operator
\[
M(\pi,s)=M(s):\Ind(\WhitML(\pi),s)\rightarrow\Ind(\WhitML(\d\pi),-s)
\]
by (the analytic continuation of)
\begin{equation} \label{eq: defM}
M(s)W(g)=\wgtt{g}^s\int_U W_s(\levi(\spclt) w_U ug)\,du
\end{equation}
where $\spclt=\diag(1,-1,\dots,1,-1)$ is introduced in order to preserve the character $\psi_{N_\Levi}$.
By abuse of notation we will also denote by $M(\pi,s)$ the intertwining operator
$\Ind(\WhitMLd(\pi),s)\rightarrow\Ind(\WhitMLd(\d\pi),-s)$ defined in the same way.

For simplicity we denote $M^*_sW:=(M(s)W)_{-s}$ so that
\[
M^*_sW=\int_U W_s(\levi(\spclt) w_U u\cdot)\ du
\]
for $\Re s\gg_{\pi}1$. Set $M^*W:=M^*_{\frac12}W$.

Recall that $H_\GLnn$ is the centralizer of $E=\diag(1,-1,\dots,1,-1)(=\spclt)$, isomorphic to $\GL_{\rkn}\times\GL_{\rkn}$.
The involution $\wnn$ lies in the normalizer of $H_\GLnn$.
We consider the class $\Irr_{\meta}\GLnn$ of irreducible representations
of $\GLnn$ which admit a continuous non-zero
$H_\GLnn$-invariant linear form $\ell$ on the space of $\pi$.
It is known that any such $\pi$ is self-dual and $\ell$ is unique up to a scalar (\cite{MR1394521, MR2553879}).
Thus, $\ell\circ\pi(\wnn)=\eps_\pi\ell$ where $\eps_\pi\in\{\pm1\}$ does not depend on the choice of $\ell$.
By \cite[Theorem~3.2]{1401.0198}, when $F$ is $p$-adic, for any $\pi\in\Irr_{\meta,\gen}\GLnn$ we have
\begin{equation} \label{eq:shalmetagen}
\eps_\pi=\epsilon(\frac12,\pi,\psi)
\end{equation}
where $\epsilon(s,\pi,\psi)$ is the standard $\epsilon$-factor attached to $\pi$.

Let $\pi\in\Irr_{\gen,\meta}\GLnn$, considered also as a representation of $\Levi$ via $\levi$.
By \cite[Proposition~4.1]{1401.0198} $M(s)$ is holomorphic at $s=\frac12$.
Denote by $\des(\pi)$ the space of Whittaker functions on $\tilde G$ generated by $\whitform(M^*W,\Phi,\cdot)$,
$W\in \Ind(\WhitML(\pi))$, $\Phi\in\swz(F^{\rkn})$, i.e.,
\[
\des(\pi)=\{\nwhitform(M^*W,\cdot):W\in \Ind(\WhitML(\pi))\}.
\]
This defines an explicit descent map $\pi\mapsto \des(\pi)$ on $\Irr_{\gen,\meta}\GLnn$.
By \cite[Theorem in \S1.3]{MR1671452} $\des(\pi)\ne0$. It can be shown that $\des(\pi)$ is admissible but we will not do it here since
we will not use this fact directly.

\subsection{\emph{Good} representations}

Let $\pi\in\Irr_{\gen}\Levi$ and $\tilde\sigma\in\Irr_{\gen,\psi_{\tilde N}^{-1}}\tilde G$ with Whittaker model $\WhitGd(\tilde\sigma)$.
Following Ginzburg--Rallis--Soudry \cite{MR1675971}, for any $\tilde W\in \WhitGd(\tilde\sigma)$, $W\in\Ind(\WhitML(\pi))$ define
the local Shimura type integral
\begin{equation}\label{eq: localinner}
\tilde{J}(\tilde W,W,s):=\int_{N'\bs G'}\tilde W(\tilde g)\nwhitform(W_s,\tilde g)\ dg.
\end{equation}
By \cite[\S6.3]{MR1675971} and \cite{MR1671452} $\tilde{J}$ converges for $\Re s\gg_{\pi,\tilde\sigma}1$ and admits a meromorphic continuation in $s$.
Moreover, for any $s\in\C$ we can choose $\tilde W$ and $W$ such that $\tilde{J}(\tilde W,W,s)\ne0$.

Let $\pi\in\Irr_{\gen,\meta}\GLnn$. We say that $\pi$ is \emph{\good}\ if the following conditions are satisfied for all $\psi$:
\begin{enumerate}
\item $\des(\pi)$ is irreducible.
\item $\tilde{J}(\tilde W,W,s)$ is holomorphic at $s=\frac12$ for any $\tilde W\in\desinv(\pi)$ and $W\in\Ind(\WhitML(\pi))$.
(We do not assume that the integral defining $\tilde{J}$ converges at $s=\frac12$.)
\item There is a non-degenerate $\tilde G$-invariant pairing $[\cdot,\cdot]$ on $\desinv(\pi)\times\des(\pi)$ such that
\[
\tilde{J}(\tilde W,W,\frac12)=[\tilde W,\nwhitform(M^*W,\cdot)]
\]
for any $\tilde W\in\desinv(\pi)$, $W\in\Ind(\WhitML(\pi))$.
In particular $\des(\pi)^\vee\simeq\desinv(\pi)$.
\end{enumerate}

This property was introduced and discussed in \cite[\S5]{1401.0198}.
In particular, if $\pi$ is \good\ and $\tilde\pi=\desinv(\pi)$ then there exists a constant $c_\pi$ such that for any
$\tilde W\in\WhitGd(\tilde\pi^\vee)$, $\d{\tilde W}\in \WhitG(\tilde\pi)$ we have
\begin{equation} \label{eq: main}
\stint_{N'}[\tilde\pi(\animg n)\tilde W, \d{\tilde W}]\psi_{\tilde N}( n)\,dn=c_\pi\tilde W(e)\d{\tilde W}(e).
\end{equation}
More explicitly, for any $W\in\Ind(\WhitML(\pi))$ and $\alt{W}\in\Ind(\WhitMLd(\pi))$ we have
\begin{equation} \label{eq: main1}
\stint_{N'}\tilde{J}(\nwhitformd(M^*\alt{W},\cdot \animg n),W,\frac12)\psi_{\tilde N}( n)\,dn
=c_\pi\nwhitformd(M^*\alt{W},e)\nwhitform(M^*W,e).
\end{equation}

Let $F$ be a $p$-adic field. In the rest of the paper we will prove the following statement:
\begin{theorem} \label{thm: main}
For any unitarizable $\pi\in\Irr_{\gen,\meta}\GLnn$ which is \good\ we have $c_\pi=\eps_\pi$.
\end{theorem}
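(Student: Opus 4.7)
The strategy follows the roadmap laid out in the introduction. The starting point is the Main Identity \eqref{eq: main1}, which defines $c_\pi$ as the ratio of a stable integral of $\tilde J$-integrals to a product of values of $\whitform$'s. First I would invoke the reduction of Section \ref{sec: firstred}, which combines Matringe's classification of $\GL_n \times \GL_n$-distinguished generic representations \cite{1301.0350} with the globalization result from \cite[Appendix A]{1404.2909} based on Sakellaridis--Venkatesh, in order to restrict attention to the case where $\pi$ is tempered and enjoys additional good properties. Under these simplifying assumptions both sides of \eqref{eq: main1} are better behaved.

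Next, invoking the nonvanishing of the Bessel function of generic representations (Appendix \ref{sec: nonvanishing}), I can restrict to a class of ``special'' sections $W \in \Ind(\WhitML(\pi))$ without losing any information about $c_\pi$. For such sections the left-hand side of \eqref{eq: main1} can be recognized (via the uniqueness results of Baruch \cite{MR2192818}, \cite{MR3021791}) as the value at $s=\tfrac12$ of an analytic family of $G$-invariant bilinear forms $\Bil(W,\d W,s)$ on $\Ind(\WhitML(\pi),s)\times \Ind(\WhitMLd(\d\pi),-s)$, applied to $(W, M(\pi,\tfrac12)\alt W)$. The advantage is that $\Bil(\,\cdot\,,\,\cdot\,,s)$ is defined by an absolutely convergent integral for $\Re s \gg 0$ and admits meromorphic continuation, which allows us to trade the delicate $s=\tfrac12$ point for a region of genuine convergence.

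Then, for $\Re s \ll 0$ and with a further (harmless) restriction on $\alt W$ ensuring compact support of the integrand, I would apply the functional equations of \cite{LMao4} to rewrite
\[
\Bil(W,M(\pi,s)\alt W,s) = \int \newhalf(M(\pi,s)W,-s;t)\,\newhalfd(\alt W,s;t)\,\frac{dt}{\abs{\det t}},
\]
where $t$ ranges over an $n$-dimensional torus. By the entireness results of \cite{1403.6787} for $\newhalf(\cdot,s;t)$ after a suitable averaging (the non-convergent inner integrals are the reason for the averaging), this right-hand side extends holomorphically to a neighborhood of $s=\tfrac12$, while the left-hand side is holomorphic there since $M(\pi,s)$ is by \cite[Proposition~4.1]{1401.0198}. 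Hence the identity persists at $s=\tfrac12$.

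Finally, specializing to $s=\tfrac12$, two ingredients finish the proof: Proposition \ref{prop: tindep} (to be established from \cite{1403.6787}) says that after averaging, $\newhalf(M(\pi,\tfrac12)W,-\tfrac12;t)$ is constant in $t$ with an explicit value, and Proposition \ref{prop: Mfactor} (similarly) says that the remaining $t$-integral of $\newhalfd(\alt W,\tfrac12;t)$ factors explicitly through $M(\pi,\tfrac12)\alt W$. Putting these together evaluates both sides of \eqref{eq: main1} in explicit form, and the identification $c_\pi = \eps_\pi$ follows by tracking all constants, with the sign $\eps_\pi$ entering precisely through the $H_\GLnn$-equivariance of the descent functional. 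The principal obstacle is the third step: making the formal manipulation with functional equations rigorous despite the fact that the native integrals defining both sides converge only as iterated integrals. This is exactly what forces us to pass to the analytic family $\Bil(\cdot,\cdot,s)$, to restrict to special sections, and to invoke the entireness of the averaged $\newhalf$ from \cite{1403.6787}.
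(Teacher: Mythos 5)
Your plan reproduces the paper's own strategy essentially step for step: the reduction to the tempered case via Matringe's classification and globalization, the restriction to special sections justified by the nonvanishing of Bessel functions, the introduction of the analytic family $\Bil(W,\d W,s)$ and the stability results of \cite{MR3021791}, the application of the functional equations of \cite{LMao4} for $\Re s\ll 0$ with the extra restriction on $\alt W$, the analytic continuation via the averaged $\newhalf$ from \cite{1403.6787}, and the final evaluation through Propositions \ref{prop: tindep} and \ref{prop: Mfactor}. The only cosmetic inaccuracy is attributing the role of \cite{MR2192818}/\cite{MR3021791} to a "uniqueness" statement rather than to the stability of the Bessel integral, but this does not affect the argument.
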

\begin{remark}
Of course, we expect Theorem \ref{thm: main} to hold in the archimedean case as well.
However, we will not deal with the archimedean case in this paper.
\end{remark}

Of particular importance will be the following special case (see Remark \ref{rem: sqrintcase} below):
\begin{corollary} \label{cor: sqrintcase}
Suppose $\pi=\tau_1\times\dots\times\tau_l$ where $\tau_j\in\Irr_{\sqr,\meta}\GL_{2m_j}$,
$j=1,\dots,l$ are distinct and ${\rkn}=m_1+\dots+m_l$. Then $\pi$ is \good\ and $\tilde\pi:=\desinv(\pi)$ is square-integrable.
Moreover,
\[
\int_{N'}\tilde{J}(\tilde\pi(\animg n)\tilde W,W,\frac12)\psi_{\tilde N}( n)\,dn
=\eps_\pi\tilde W(e)\nwhitform(M^*W,e),
\]
for any $W\in\Ind(\WhitML(\pi))$ and $\tilde W\in\WhitGd(\tilde\pi)$.
\end{corollary}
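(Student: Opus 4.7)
The plan is to derive the corollary as a consequence of Theorem \ref{thm: main} by verifying (i) that $\pi=\tau_1\times\cdots\times\tau_l$ is unitarizable and \good, and (ii) that $\tilde\pi:=\desinv(\pi)$ is square-integrable, so that the stable integral in \eqref{eq: main} is in fact an absolutely convergent ordinary integral.

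For step (i) I would first note that unitarizability, irreducibility, and temperedness of $\pi$ follow at once from the distinctness of the unitary square-integrable factors $\tau_j$ via standard Bernstein--Zelevinsky theory. The property of being of metaplectic type is inherited by the induced representation: an $H_\GLnn$-invariant functional on $\pi$ is built by the open-orbit procedure from the individual invariant functionals on each $\tau_j$, and its uniqueness---so that $\eps_\pi$ is well-defined---follows from distinctness. For the three conditions in the definition of \good: irreducibility of $\des(\pi)$ in the tempered case is among the outputs of the Ginzburg--Rallis--Soudry descent; holomorphy of $\tilde{J}(\tilde W,W,\Phi,s)$ at $s=\tfrac12$ follows from the convergence analysis of \cite{MR1675971} together with the absence of poles of the relevant local $L$-factors on $\Re s\ge\tfrac12$ in the tempered range; and the pairing identity is the local functional equation of \cite[\S5]{1401.0198}.

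For step (ii), square-integrability of $\tilde\pi$ under the distinctness hypothesis would be established via Casselman's criterion applied to the explicit realization of $\des(\pi)$ as a space of Whittaker functions generated by $\whitform(M(\tfrac12)W,\Phi,\cdot,-\tfrac12)$. I would analyze the exponents of $\tilde\pi$ along every proper parabolic of $\tilde G$: square-integrability of each $\tau_j$ provides the needed negativity of each exponent, while the distinctness assumption excludes the cancellations that would otherwise produce a non-strictly-negative composite exponent. Once $\tilde\pi$ is square-integrable, matrix coefficients of $\tilde\pi$ are integrable on $N'$ (Harish-Chandra's criterion, extended to the metaplectic cover), and combining this with the bounds on $\whitform(M(\tfrac12)W,\Phi,\cdot,-\tfrac12)$ that underlie the convergence of $\tilde J$ shows that the integrand of \eqref{eq: main} is absolutely integrable over $N'$. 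The stable integral therefore coincides with the ordinary integral, and Theorem \ref{thm: main} identifies the resulting constant as $\eps_\pi$.

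The hardest steps will be verifying condition (2) in the definition of \good---holomorphy of $\tilde J$ at $s=\tfrac12$---and square-integrability of $\tilde\pi$, both of which require fine control of exponents rather than formal manipulations. Fortunately both have already been addressed in \cite{1401.0198} and in the earlier work of Ginzburg--Rallis--Soudry; given those inputs, the corollary reduces to an application of Theorem \ref{thm: main}.
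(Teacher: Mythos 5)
Your overall skeleton (establish that $\pi$ is \good\ and that $\tilde\pi=\desinv(\pi)$ is square-integrable, deduce that the stable integral is an absolutely convergent ordinary integral, then quote Theorem \ref{thm: main}) agrees with the paper's, and the last step --- integrability of matrix coefficients of a square-integrable $\tilde\pi$ over $N'$ turning $\stint_{N'}$ into $\int_{N'}$ --- is fine. The genuine gap is in how you propose to verify the hypotheses. The paper does \emph{not} verify the \good\ conditions or the square-integrability of $\desinv(\pi)$ by local arguments; it observes that in the case $k=0$ (all factors $\tau_j$ square-integrable of metaplectic type and distinct) the representation $\pi$ itself can be globalized, i.e.\ realized as the local component of some $\Pi\in\Mcusp\GLnn$ via \cite[Appendix A]{1404.2909}, whereupon \good ness is Proposition \ref{prop: globalgood} (= \cite[Theorem 6.2]{1401.0198}) and irreducibility plus square-integrability of $\desinv(\pi)$ is \cite[Theorem 3.1]{1404.2909}. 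This globalization is the actual content of the proof (see Remark \ref{rem: sqrintcase}), and it is absent from your proposal.

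Your substitute local arguments do not hold up as stated. Condition (3) of the definition of \good\ --- that $\tilde J(\cdot,\cdot,\cdot,\frac12)$ factors as an invariant pairing against $\whitform(M(\frac12)W,\Phi,\cdot,-\frac12)$ --- is not ``the local functional equation of \cite[\S5]{1401.0198}''; in that reference it is only \emph{defined} in \S5 and is \emph{proved} for the relevant representations by the global argument of \S6. Likewise, irreducibility of the local descent $\des(\pi)$ is not ``among the outputs'' of \cite{MR1675971, MR2848523}, whose irreducibility statements are global; the local statement is exactly \cite[Theorem 3.1]{1404.2909}. Finally, your plan to prove square-integrability of $\tilde\pi$ by running Casselman's criterion on the exponents of the explicit Whittaker realization is a substantial Jacquet-module computation for the descent that is not carried out in any of the cited sources and is not attempted in your proposal. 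As written, the three hard hypotheses are asserted rather than proved, and the one idea that makes them all available at once --- globalizing $\pi$ --- is missing.
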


\subsection{Relation to global statement}

Suppose now that $F$ is a number field and $\A$ is its ring of adeles.
We say that an irreducible cuspidal representation $\pi$ of $\GLnn$ is of \emph{metaplectic type} if
\[
\int_{H_\GLnn(F)\bs H_\GLnn(\A)\cap\GLnn(\A)^1}\varphi(h)\ dh\ne0
\]
for some $\varphi$ in the space of $\pi$. (Here, $\GLnn(\A)^1=\{m\in\GLnn(\A):\abs{\det m}=1\}$.)
Equivalently, $L^S(\frac12,\pi)\res_{s=1}L^S(s,\pi,\wedge^2)\ne0$ (\cite{MR1241129}).\footnote{We can
replace the partial $L$-function by the completed one since the local factors are holomorphic and non-zero.}
In particular, $\pi$ is self-dual and admits a trivial central character.
We write $\Cusp_{\meta}\GLnn$ for the set of irreducible cuspidal representations of metaplectic type.

Consider the set $\Mcusp\GLnn$ of automorphic representations $\pi$ of $\GLnn(\A)$ which are realized on Eisenstein series
induced from $\pi_1\otimes\dots\otimes\pi_k$ where $\pi_i\in\Cusp_{\meta}\GL_{2n_i}$, $i=1,\dots,k$ are distinct and $\rkn=n_1+\dots+n_k$.
The representation $\pi$ is irreducible: it is equivalent to the parabolic induction $\pi_1\times\dots\times\pi_k$.
Moreover, $\pi$ determines $\pi_1,\dots,\pi_k$ uniquely up to permutation \cite{MR618323, MR623137}.

Recall that Conjecture~\ref{conj: metplectic global} is pertaining to the representations $\pi\in\Mcusp\GLnn$.
The following fact will be crucial for us.

\begin{proposition}(\cite[Theorem~6.2]{1401.0198}) \label{prop: globalgood}
If $\pi\in \Mcusp\GLnn$ then its local components $\pi_v$ are \good.
\end{proposition}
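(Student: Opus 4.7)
The plan is to deduce the local \emph{good} property from the global Rankin--Selberg--Shimura type integral representation developed by Ginzburg--Rallis--Soudry, exploiting the Eulerian factorization of the global object. For $\pi\in\Mcusp\GLnn$, the global descent $\tilde\pi=\desinv(\pi)$ is an irreducible cuspidal $\psi_{\tilde N}^{-1}$-generic representation of $\Mp_n(\A)$; this is the content of the GRS descent theorem together with the characterization of $\Mcusp\GLnn$ recalled in the introduction. My strategy is to identify the local component $\tilde\pi_v$ with $\desinv(\pi_v)$, and then to transfer the global cuspidal pairing on $\tilde\pi$ into the local pairing $[\cdot,\cdot]$ required by condition~(3).

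First I would verify the local--global compatibility $\tilde\pi_v\cong\desinv(\pi_v)$. The function $\whitform(W,\Phi,\cdot,s)$ is the local avatar of the Fourier--Jacobi period that cuts out the global descent; choosing decomposable data and pulling back the Whittaker functional on $\tilde\pi$ along $\whitform$ at $s=-\tfrac12$ realizes the local descent inside $\WhitGd(\tilde\pi_v)$. Irreducibility of $\tilde\pi$ then forces condition~(1): $\des(\pi_v)$ is irreducible (and nonzero, by \cite[Theorem in \S1.3]{MR1671452}). Condition~(2), holomorphy of $\tilde J$ at $s=\tfrac12$, follows by combining the fact that the global integral, being a cuspidal Petersson pairing twisted by an intertwining operator that is holomorphic at $s=\tfrac12$ (by \cite[Proposition 4.1]{1401.0198}), is holomorphic there, with the unramified computation identifying almost all local factors with $L$-values that are holomorphic (and nonzero) at $s=\tfrac12$ on $\Mcusp\GLnn$. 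A place-by-place argument via independence of the local data at $S$ then yields holomorphy at $s=\tfrac12$ for \emph{every} $v$.

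For condition~(3), I would unfold the global Shimura integral at $s=\tfrac12$ against $\whitform(M(\tfrac12)W,\Phi,\cdot,-\tfrac12)$: on one side it becomes a sum of local integrals $\tilde J(\tilde W_v,W_v,\Phi_v,\tfrac12)$, and on the other side it becomes the natural $\tilde G(\A)$-invariant pairing between $\tilde\pi$ and $\des(\pi)$ obtained from the Petersson product, which factorizes as $\prod_v[\cdot,\cdot]_v$ for local pairings between $\desinv(\pi_v)$ and $\des(\pi_v)$. Uniqueness (up to scalar) of both the local Whittaker model and the local $\tilde G$-invariant pairing (via irreducibility established in the first step) promotes this global identity into the local identity in~(3) at each place, with the scalar absorbed into the normalization of $[\cdot,\cdot]$.

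The main obstacle will be Step~1: making the identification $\tilde\pi_v\cong\desinv(\pi_v)$ rigorous at ramified places, where one cannot rely on unramified parameters. This requires a careful analysis of the Fourier--Jacobi type intertwining operator, using the nonvanishing of $\whitform(M(\tfrac12)W,\Phi,\cdot,-\tfrac12)$ applied to decomposable data and the classification of generic constituents, together with the uniqueness results of \cite{MR1394521,MR2553879} governing $H_{\GLnn}$-invariant forms. Once this local--global bridge is in place, the remaining deductions reduce to manipulating the factored global identity, aided by the nonvanishing of the Bessel-type local functional at $s=\tfrac12$.
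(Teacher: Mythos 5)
First, a point of order: the paper does not prove this proposition at all --- it is imported verbatim as Theorem~6.2 of \cite{1401.0198}, so there is no internal argument to compare yours against. Your global-to-local strategy (globalize via the descent, factorize the Shimura integral and the inner product, isolate one place) is indeed the shape of the argument in the cited paper, but as written your sketch has gaps at exactly the points that constitute the substance of that theorem.

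Two concrete problems. For condition~(1): irreducibility of the global descent $\tilde\pi$ gives irreducibility of each local component $\tilde\pi_v$, but $\des(\pi_v)$ is by definition the span of all $\whitform(M(\frac12)W,\Phi,\cdot,-\frac12)$ over local data, which a priori generates a $\tilde G$-module that could strictly contain $\WhitG(\tilde\pi_v)$; global irreducibility does not by itself ``force'' this space to be irreducible, and closing that gap requires the local structure theory of the descent (the Jacquet-module computations of Ginzburg--Rallis--Soudry), not merely the factorization of the global Whittaker coefficient. More seriously, your analytic mechanism for conditions~(2) and~(3) is wrong: for $\pi\in\Mcusp\GLnn$ the partial $L$-function $L^S(2s,\pi,\wedge^2)$ has a pole at $s=\frac12$, so the Siegel Eisenstein series --- and with it the global Shimura integral --- is \emph{not} holomorphic at $s=\frac12$; the descent is manufactured precisely from the residue there (the local operators $M_v(\frac12)$ are holomorphic, but the global intertwining operator is not). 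The correct global identity therefore compares residues at $s=\frac12$, while the holomorphy of the local integrals at $s=\frac12$ (condition~(2)) is a separate, purely local convergence statement --- it is \cite[Lemma~4.12]{1401.0198}, invoked as such in \S\ref{sec: firstred} of the present paper --- and cannot be read off from the (nonexistent) holomorphy of the global integral. Finally, to isolate a single place in the resulting product identity you must also invoke the nonvanishing of the complementary local factors at $s=\frac12$ (the nonvanishing statement for $\tilde J$ recorded after \eqref{eq: localinner}), which your sketch does not address.
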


Thus, Theorem~\ref{thm: main} implies Theorem~\ref{thm: intromain}, our main result.

\subsection{Relation to Bessel functions}
We record here a \emph{purely formal} argument that relates Theorem~\ref{thm: main} to an identity of Bessel functions, which are defined in \cite{MR3021791}.
We do not worry about convergence issues in this subsection.

Using  the functional equation (cf.~\cite{MR3267119}, note that the central character of $\pi$ is trivial)
\begin{equation} \label{eq: local functional equation}
\tilde{J}(\tilde W,M(s)W,-s)=\abs{2}^{2\rkn s} \frac{\gamma(\tilde\pi\otimes\pi,s+\frac12,\psi)}{\gamma(\pi,s,\psi)\gamma(\pi,\wedge^2,2s,\psi)}
\tilde{J}(\tilde W,W,s),
\end{equation}
\eqref{eq: main} becomes:
\[
\int_{N'}\tilde{J}(\tilde\pi(\animg n)\tilde W,M(\frac12)W,-\frac12)\psi_{\tilde N}( n)\,dn
=\abs{2}^{\rkn}\frac{\gamma(\tilde\pi\otimes\pi,s+\frac12,\psi)}{\gamma(\pi,s,\psi)\gamma(\pi,\wedge^2,2s,\psi)}\rest_{s=\frac12}
c_\pi\tilde W(e)\nwhitform(M^*W,e).
\]
Explicitly, the left-hand side is
\[
\int_{N'}\int_{N'\bs G'}\tilde W(\tilde g\tilde n)\nwhitform(M^*W,\tilde g)\ dg\psi_{\tilde N}( n)\,dn
=\int_{N'}\int_{N'\bs G'}\tilde W(\tilde g\tilde n)\tilde W'(\tilde g)\ dg\psi_{\tilde N}( n)\,dn
\]
where $\tilde W'=\nwhitform(M^*W,\cdot)$. Using Bruhat decomposition this is
\begin{multline*}
\int_{T'}\int_{N'}\int_{N'}\delta_{B'}(t)\tilde W(\tilde w_0'\tilde t\tilde n'\tilde n)
\tilde W'(\tilde w_0'\tilde t\tilde n')\psi_{\tilde N}( n)\ dn'\ dn\ dt
\\=\int_{T'}\int_{N'}\int_{N'}\delta_{B'}(t)\tilde W(\tilde w_0'\tilde t\tilde n)
\tilde W'(\tilde w_0'\tilde t\tilde n')\psi_{\tilde N}( n)\psi_{\tilde N}^{-1}( n')\ dn'\ dn\ dt.
\end{multline*}
By definition of Bessel functions $\Bes_{\tilde\pi}^{\psi_{\tilde N}^{-1}}$ (see \eqref{eq: Bess rltn}), this is
\[
\tilde W(e)\tilde W'(e)\int_{T'}\Bes_{\tilde\pi}^{\psi_{\tilde N}^{-1}}(\tilde w_0'\tilde t)
\Bes_{\d{\tilde\pi}}^{\psi_{\tilde N}}(\tilde w_0'\tilde t)\delta_{B'}(t)\ dt.
\]
Here $\d{\tilde\pi}= \des(\pi)$. Thus, on a formal level, Theorem \ref{thm: main} becomes the following inner product identity:
\begin{equation}\label{eq: Besselinner}
\int_{T'}\Bes_{\tilde\pi}^{\psi_{\tilde N}^{-1}}(\tilde w_0'\tilde t)
\Bes_{\d{\tilde\pi}}^{\psi_{\tilde N}}(\tilde w_0'\tilde t)\delta_{B'}(t)\ dt=
\abs{2}^{\rkn}\frac{\gamma(\tilde\pi\otimes\pi,s+\frac12,\psi)}{\gamma(\pi,\wedge^2,2s,\psi)}\rest_{s=\frac12}.
\end{equation}
(It is easy to determine the order of the poles in the numerator and denominator on the right-hand side in terms
of the data of Theorem \ref{thm: genmetaclassification} below.)
Of course, the above manipulations are purely formal as the integrals are not absolutely convergent.

\subsection{A reduction of the main theorem}

\label{sec: firstred}

For the rest of the paper let $F$ be a $p$-adic field.
The proof of Theorem \ref{thm: main} is essentially local under the additional assumption
that $\pi\in\Irr_{\temp}\GLnn$ and $\tilde\pi:=\desinv(\pi)\in\Irr_{\temp}\tilde G$.
We first show that it indeed suffices to prove Theorem \ref{thm: main} under these additional assumptions.
This will use a global argument as well as the following classification result due to Matringe.
We denote by $\times$ parabolic induction for $\GL_m$.

\begin{theorem}[\cite{MR3430877}] \label{thm: genmetaclassification}
The set $\Irr_{\gen,\meta}\GLnn$ consists of the irreducible representations of the form
\[
\pi=\sigma_1\times\d\sigma_1\times\dots\times\sigma_k\times\d\sigma_k\times
\tau_1\times\dots\times\tau_l
\]
where $\sigma_1,\dots,\sigma_k$ are essentially square-integrable\footnote{that is, a twist of a square-integrable representation by
a quasi-character},
$\tau_1,\dots,\tau_l$ are square-integrable and $L(0,\tau_i,\wedge^2)=\infty$ for all $i$.
\end{theorem}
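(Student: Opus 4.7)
The plan, following Matringe \cite{1301.0350}, is to reduce the classification to (i) the identification of which individual essentially square-integrable representations are $H$-distinguished and (ii) a geometric-lemma argument controlling how distinction propagates under parabolic induction.

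For the building blocks I would first show that for any essentially square-integrable $\sigma$ of $\GL_m$, the induced representation $\sigma \times \d\sigma$ of $\GL_{2m}$ admits a non-zero $\GL_m \times \GL_m$-invariant functional. This is an open-orbit argument on the Siegel flag variety of $\GL_{2m}$: the open $\GL_m \times \GL_m$-orbit has isotropy $\GL_m^{\text{diag}}$, and the Mackey summand associated to this orbit carries the invariant form coming from the canonical pairing $\sigma \otimes \d\sigma \to \mathbb{C}$. Second, I would show that a square-integrable $\tau$ of $\GL_{2m}$ is $\GL_m \times \GL_m$-distinguished if and only if $L(0, \tau, \wedge^2) = \infty$; the existence of the invariant form at the pole is provided by the Friedberg--Jacquet integral, while the converse uses that any such functional forces $\tau \cong \d\tau$ together with a pole analysis of the local integral.

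With these blocks in place, the sufficiency direction combines them on a general $\pi = \sigma_1 \times \d\sigma_1 \times \cdots \times \sigma_k \times \d\sigma_k \times \tau_1 \times \cdots \times \tau_l$ by a geometric-lemma computation: the partial flag variety attached to the inducing parabolic admits an open $H_\GLnn$-orbit whose stabilizer is precisely the product of the $H$-type subgroups coming from each summand, so Frobenius reciprocity yields the required non-zero invariant functional on $\pi$. For the necessity direction, I would start with a generic $H_\GLnn$-distinguished $\pi$, write it in its Langlands/Zelevinsky classification as $\pi = \delta_1 \times \cdots \times \delta_r$ with $\delta_i$ essentially square-integrable, and apply the geometric lemma: the $H_\GLnn$-orbits on the corresponding flag variety are indexed by certain matching data on $\{1, \ldots, r\}$, and non-vanishing of the corresponding $\Hom$-spaces forces each $\delta_i$ either to be paired with some $\delta_j \cong \d\delta_i$ (producing a $\sigma \times \d\sigma$ block) or to be self-dual with $L(0, \delta_i, \wedge^2) = \infty$ (producing a $\tau$).

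The main obstacle is the vanishing of non-open contributions in the necessity step: the geometric lemma produces many subquotients a priori, and one must show that only those corresponding to the pairing and self-dual symplectic strata can contribute to $\Hom_{H_\GLnn}(\pi, \mathbb{C})$. Matringe handles this by an induction on $r$ built on the compatibility of $H_\GLnn$-distinction with Bernstein--Zelevinsky derivatives; the induction base is the discrete-series case described above, and the key extra input is the precise location of poles of $L(s, \tau, \wedge^2)$ for square-integrable $\tau$, which via the M{\oe}glin--Tadi{\'c} classification of discrete series reduces to the corresponding statement for cuspidal representations.
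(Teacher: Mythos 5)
The paper does not prove this theorem; it is quoted directly from Matringe \cite{1301.0350}, so there is no internal argument to compare against. Judged against Matringe's actual proof, your outline reproduces the right strategy in its broad strokes: open-orbit distinction of the blocks $\sigma\times\d\sigma$, the discrete-series criterion via the pole of $L(s,\tau,\wedge^2)$ at $s=0$, Mackey theory/the geometric lemma in both directions, and an induction using Bernstein--Zelevinsky derivatives to rule out contributions from the non-generic orbits in the necessity step.

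Two steps are weaker than your phrasing suggests. First, in the sufficiency direction, Frobenius reciprocity does not produce an $H_\GLnn$-invariant functional on $\pi$ from data on the \emph{open} orbit: in the Mackey filtration of $\pi\rest_{H_\GLnn}$ the open-orbit term is a subobject (compactly supported sections), not a quotient, so an invariant form on that summand need not extend to $\pi$. One needs either the Blanc--Delorme mechanism (realize the period as a convergent integral over $(H_\GLnn\cap P)\bs H_\GLnn$ for dominant twists of the inducing data and continue meromorphically) or, as Matringe does, extract the functional from the local Friedberg--Jacquet/Bump--Friedberg zeta integral normalized by its $L$-factor; the same caveat already applies to your basic block $\sigma\times\d\sigma$. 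Second, the reduction of the discrete-series criterion to cuspidal representations does not pass through the M{\oe}glin--Tadi\'c classification, which concerns classical groups; for $\GL_{2m}$ one uses the Zelevinsky segment description of discrete series together with the known expression for $L(s,\tau,\wedge^2)$ of a generalized Steinberg representation, and the equivalence between the linear period and the pole is obtained by routing through the Shalika model rather than directly through a pole analysis of the linear-period integral. Neither point derails the plan, but both are places where the one-line justifications given would need genuine additional arguments.
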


(We expect the same result to hold in the archimedean case as well.)

Denote by $\omega_\sigma$ the central character of $\sigma\in\Irr\GL_m$.
We also write $\sigma[s]$ for the twist of $\sigma$ by $\abs{\det\cdot}^s$.

Let $\tau_j\in\Irr_{\sqr,\meta}\GL_{2m_j}$, $j=1,\dots,l$ be distinct and $\delta_i\in\Irr_{\sqr}\GL_{n_i}$, $i=1,\dots,k$
with ${\rkn}=n_1+\dots+n_k+m_1+\dots+m_l$. (Possibly $k=0$ or $l=0$.) For $\underline{s}=(s_1,\dots,s_k)\in\C^k$ we consider the representation
\[
\pi(\underline{s})=\delta_1[s_1]\times\d{\delta_1}[-s_1]\times\dots\times\delta_k[s_k]\times\d{\delta_k}[-s_k]\times\tau_1\times\dots\times\tau_l.
\]
Suppose that our given $p$-adic field is the completion at a place $v$ of a number field.
We claim that for a dense set of $\underline{s}\in\iii\R^k$, $\pi(\underline{s})$ is the local component at $v$ of an element of $\Mcusp\GLnn$.
This follows from \cite[Appendix A]{1404.2909}.\footnote{We remark that the appendices and \S4 of \cite{1404.2909},
and in particular the proof of \cite[Theorem~3.1]{1404.2909}, are independent of the results of the current paper}
Indeed, let $m=m_1+\dots+m_l$ and let $\rho\in\Irr_{\sqr,\gen}\SO(2m+1)$ be the representation corresponding to
$\tau_1\times\dots\times\tau_l$ under Jiang--Soudry \cite{MR2058617} and let
$\tilde\rho\in\Irr_{\sqr,\gen}\Mp_m$ be the theta lift of $\rho$.
Let $\sigma(\underline{s})=\delta_1[s_1]\times\dots\times\delta_k[s_k]\rtimes\rho$ (parabolic induction) and
$\tilde\sigma(\underline{s})=\delta_1[s_1]\times\dots\times\delta_k[s_k]\rtimes\tilde\rho$.
Then for $\underline{s}$ in a dense open subset of $\iii\R^k$ we have $\sigma(\underline{s})\in\Irr\SO(2\rkn +1)$
and $\tilde\sigma(\underline{s})\in\Irr\tilde G$ and moreover by \cite{MR2999299} $\tilde\sigma(\underline{s})$
is the theta lift of $\sigma(\underline{s})$.
By \cite[Corollary A.8]{1404.2909}, for a dense set of $\underline{s}\in\iii\R^k$,
$\tilde\sigma(\underline{s})$ is the local component at $v$ of a generic cuspidal automorphic representation of $\Mp_{\rkn}(\A)$
whose theta lift to $\SO(2{\rkn}+1)$ is cuspidal.
The Cogdell--Kim--Piatetski-Shapiro--Shahidi lift (\cite{MR2075885}) of the latter to $\GLnn$ is the required representation in $\Mcusp\GLnn$.

It follows from Proposition~\ref{prop: globalgood}
that for a dense set of $\underline{s}\in\iii\R^k$, $\pi(\underline{s})\in\Irr_{\temp,\meta}\GLnn$ is \good.
Moreover, by \cite[Proposition 4.6]{1404.2909} $\desinv(\pi(\underline{s}))=\tilde\sigma(\underline{s})$ and in particular
it is tempered.

Suppose that $\underline{s}$ is in the domain
\[
\mathfrak{D}=\{(s_1,\dots,s_k)\in\C^k:-\frac12<\Re s_i<\frac12\text{ for all }i\}.
\]
We recall that by \cite[Lemma 4.12]{1401.0198} the integral defining $\tilde{J}(\nwhitformd(M^*\alt{W},\cdot),W,s)$
converges and is holomorphic at $s=\frac12$ for any $W\in\Ind(\WhitML(\pi(\underline{s})))$, $\alt{W}\in\Ind(\WhitMLd(\pi(\underline{s})))$.
Moreover, by the properties of $\whitform$ and $\tilde J$ (see \cite{1401.0198}, (4.4) and (4.13)), the function
$g\mapsto\tilde{J}(\nwhitformd(M^*\alt{W},\cdot \animg g),W,\frac12)$ is bi-$K_0$-invariant
provided that $K_0\in\csgr^s(G')$ is such that $I(\frac12,\toG(k))W=W$, $I(\frac12,\toG(k))\alt{W}=\alt{W}$ for all $k\in K_0$.
It follows from \cite[Proposition 2.11]{MR3267120} (applied to $\tilde G$) that the stable integral on the left-hand side of \eqref{eq: main1} can be written
as an integral over a compact open subgroup of $N'$ depending only on $K_0$.
Thus, taking $W\in\Ind(\WhitML(\pi(\underline{s})))$, $\alt{W}\in\Ind(\WhitMLd(\pi(\underline{s})))$
to be Jacquet integrals, both sides of \eqref{eq: main1} for $\pi(\underline{s})$ are holomorphic functions of $\underline{s}\in\mathfrak{D}$.
Note that by \cite[Lemma~3.6]{1401.0198} $\eps_{\pi(\underline{s})}=\omega_{\delta_1}(-1)\cdots\omega_{\delta_k}(-1)\eps_{\tau_1}\cdots\eps_{\tau_l}$
is independent of $\underline{s}$.
Thus, in order to prove \eqref{eq: main1} for $\underline{s}\in\mathfrak{D}$ it is enough to show it for a dense set of $\underline{s}\in\iii\R^k$.
Since every unitarizable $\pi\in\Irr_{\meta}\GLnn$ is of the form $\pi(\underline{s})$ for some
$\tau_1,\dots,\tau_l,\delta_1,\dots,\delta_k$ as above and $\underline{s}\in\mathfrak{D}$, the reduction step follows.

\begin{remark} \label{rem: sqrintcase}
In the case $k=0$ we can globalize $\pi$ itself. Thus, by Proposition~\ref{prop: globalgood} $\pi$ is \good;
$\desinv(\pi)$ is irreducible and square integrable (see \cite[Theorem~3.1]{1404.2909}).\footnotemark[\value{footnote}]
This yields Corollary \ref{cor: sqrintcase} from Theorem \ref{thm: main}.
\end{remark}

In the remainder of the paper we will prove Theorem \ref{thm: main}, i.e., the Main Identity
\begin{multline} \tag{MI} \label{eq: MI'}
\stint_{N'}\big(\int_{N'\bs G'}\nwhitform(W_s,\tilde g)\nwhitformd(M^*\alt{W} ,\tilde g\tilde u)\ dg\big)
\psi_{\tilde N}(\tilde u)\ du\rest_{s=\frac12}=\\
\eps_\pi\nwhitformd(M^*\alt{W},e)\nwhitform(M^*W,e)
\end{multline}
under the assumptions that $\pi\in\Irr_{\temp,\meta}\GLnn$ is \good\ and $\tilde\pi:=\desinv(\pi)$ is tempered.

\begin{remark}
As already pointed out before (and used in the reduction step), by  \cite[Lemma 4.12]{1401.0198} the inner integral on the left-hand side of \eqref{eq: MI'}
converges (and is analytic) for $\Re s\ge\frac12$ (for any unitary $\pi$). We will not need to use this fact explicitly any further.
\end{remark}

\section{An informal sketch} \label{sec: nsketch}

Before embarking on the rigorous proof of the relation \eqref{eq: MI'}, let us give a brief sketch of the argument.
For convenience we will momentarily ignore all convergence issues (which are of course at the heart of the argument,
and will be discussed below). Thus, the discussion of this section is \emph{purely formal}.

Define (assuming convergent)
\[
\Bil(W,\d{W})=\int_{N'}\big(\int_{N'\bs G'}\nwhitform(W,\animg{g})
\nwhitformd(\d{W},\animg{g}\animg{u})\ dg\big)\psi_{\tilde N}(\animg{u})\ du,
\]
so that the left-hand side of \eqref{eq: MI'} is $\Bil(W_{\frac12},M^*W)$.

We will use the following formal identity (which follows from the Bruhat decomposition for $G'$):
for (suitable) $\tilde W\in C^{\smth}(\tilde N\bs \tilde G,\psi_{\tilde N})$ and $\d{\tilde W}\in C^{\smth}(\tilde N\bs \tilde G,\psi_{\tilde N}^{-1})$ we have
\begin{multline} \label{eq: BruhatG'}
\int_{N'}\big(\int_{N'\bs G'}\tilde W(\animg{g})\d{\tilde W}(\animg{g}\animg{u})\ dg\big)\psi_{\tilde N}(\animg{u})\ du=\\
\int_{T'}\modulus_{B'}(t)\int_{N'} \tilde W(\animg{w_0't}\animg{n_1})\psi_{\tilde N}(\animg{n_1})^{-1}\ dn_1
\int_{N'}\d{\tilde W}(\animg{w_0't}\animg{n_2})\psi_{\tilde N}(\animg{n_2})\ dn_2\ \ dt.
\end{multline}
(We remark that for the ensuing discussion it would be more natural to replace the right-hand side by
\begin{multline} \label{eq: lessadv}
\int_{N'_{\Levi'}}\int_{N'_{\Levi'}\bs \Levi'}  \modulus_{P'}(m)
\int_{U'}\tilde W(\twU\animg{m}\animg{n}\animg{u_1})\psi_{\tilde N}(\animg{u_1})^{-1}\ du_1\\
\int_{U'}\d{\tilde W}(\twU\animg{m}\animg{u_2})\psi_{\tilde N}(\animg{u_2})\ du_2\ \ dm\ \psi_{N'_{\Levi'}}(n)\ dn.
\end{multline}
However, it is analytically advantageous to work with the former expression.)

Applying this to $\tilde W=\nwhitform(W,\cdot)$, $\d{\tilde W}=\nwhitformd(\d{W},\cdot)$ we get
\begin{equation} \label{eq: bilYY}
\Bil(W ,\d{W},s):=\int_{T'}\oldhalf(W,t)\oldhalfd(\d{W},t)\modulus_{B'}({t})\,dt
\end{equation}
where
\[
\oldhalf(W,t):=\int_{N'}\nwhitform(W,\animg{w_0'tn})\psi_{\tilde N}(\animg n)^{-1}\,dn.
\]
We note the following equivariance property of $\oldhalf(W,t)$.
Let $\kgrp=\Vm\rtimes\toG(N')$, which is the stabilizer of $\psi_U$ in $N$.
Let $\psi_{\kgrp}$ be the character
\[
\psi_{\kgrp}(\toG(n)v)=\psi_{N'}(n)\psi_{\Vm}(v),\ \ n\in N',v\in\Vm
\]
on $\kgrp$. Then by Lemma~\ref{lem: equnwhit} $\oldhalf(W,t)$ is $(\kgrp,\psi_{\kgrp})$-equivariant in $W$.

Next, we explicate $\nwhitform(W,\animg{w_0'tn})$. By Lemma \ref{lem: equnwhit} it is enough to explicate $\nwhitform(W,\twU)$.
We have (up to an immaterial root of unity, which we suppress)
\[
\nwhitform(W,\twU)=*
\int_{\altV_{\Levi}\bs \Vm}W (w_Uv)\psi_{\Vm}(v)^{-1}\,dv=
*\int_{V_U}W (w_Uv)\psi_U(v)^{-1}\,dv.
\]
(See Remark \ref{rem: genbc} for a more precise statement.)
We also mention (this will be used for the right-hand side of \eqref{eq: MI'}) that
\[
\nwhitform(W,e)=\int_{\auxthree^{-1}(V_\gamma\rtimes\toG(N'))\auxthree\bs \kgrp} W(\gamma \auxthree n )\psi_{\kgrp}( n )^{-1}\, dn
\]
for a suitable unipotent element $\auxthree$. (See Lemma \ref{L: Mint}.) This expression is clearly $(\kgrp,\psi_{\kgrp})$-equivariant in $W$.

\begin{remark}
By \cite[\S 4.4, Theorem]{MR1671452} and \cite[Lemma 6.1]{MR3431601}, for $\pi\in \Irr_{\meta,\temp}\GLnn$ the space of
$(\kgrp,\psi_{\kgrp})$-equivariant linear forms on the Langlands quotient
of $\Ind(\WhitM(\pi),\frac12)$ is one-dimensional. This implies that $\oldhalf(M^*W,t)$ is proportional $\nwhitform(M^*W,e)$.
The constant of proportionality is a complicated function of $t$ which is probably related to the Bessel function.
At best, the putative equality becomes something like \eqref{eq: Besselinner} which seems difficult to approach directly.
Instead, we take a different approach.
\end{remark}

It follows from the formula for $\nwhitform(W,\twU)$ above that
\begin{equation} \label{eq: expforY}
\oldhalf(W,t)=*\wgt{t}^{{\rkn}-\frac12}\int_{N'_{\Levi'}}\int_U W(w_U\toG( \wnM tn)v)\psi_{N'_{\Levi'}}(n)^{-1}\psi_U(u)^{-1}\,du\ dn.
\end{equation}
Substituting this in \eqref{eq: bilYY} and using the Bruhat decomposition for $\GLn$ we obtain
\begin{multline} \label{eq: bFE}
\Bil(W ,\d{W})=\int_U\int_U\int_{N'_{\GLn}}\int_{N'_{\GLn}\bs\GLn}
W\left(\toLevi(gn)w_Uu_1\right)\d{W}\left(\toLevi(g)w_Uu_2\right)\\\modulus_P(\toLevi(g))^{-1}\abs{\det g}^{1-{\rkn}}\ dg\
\psi_{N'_{\GLn}}(n)\ dn\ \psi_U(u_1)^{-1}\psi_U(u_2)\,du_1\,du_2.
\end{multline}
For the inner integral, we will use the following functional equation proved in \cite{MR3220931}:
for any $W\in\WhitM(\pi)$ and $\d{W}\in\WhitMd(\d\pi)$ we have
\begin{multline*}
\int_{N'_{\GLn}\bs \GLn} W\left(\toM(g)\right)\d{W}\left(\toM(g)\right) \abs{\det g}^{1-{\rkn}}\,dg
\\=\int_{\zerocol}\int_{\zerocol}\int_{N'_{\GLn}\bs \GLn}
W \left(w_{2{\rkn},{\rkn}}\toM(g)\toU_{\GLnn}(X)\right)
\d{W} \left(w_{2{\rkn},{\rkn}}\toM(g)\toU_{\GLnn}(Y)\right)
\abs{\det g}^{{\rkn}-1}\,dg\,d X\,d Y
\end{multline*}
where $\zerocol$ is the subspace of $\Mat_\rkn$ consisting of the matrices whose first column is zero.
From this it is easy to infer that
\begin{multline} \label{eq: GLnnFEint}
\int_{N'_{\GLn}}\big(\int_{N'_{\GLn}\bs \GLn} W\left(\toM(gn)\right)\d{W}\left(\toM(g)\right) \abs{\det g}^{1-{\rkn}}\,dg\big)\
\psi_{N'_{\GLn}}(n)\,dn\\=
\int_{\vrq}\int_{\vrq}\int_{T'_{\GLn}}W\left(\toMd(t)\auxone r_1\right)
\d{W} \left(\toMd(t)\auxone r_2\right)\\
\abs{\det t}^{{\rkn}-1}\modulus_{B'_{\GLn}}(t)^{-1}\psi_{\vf}(r_1^{-1}r_2) \,dt\,dr_2\,dr_1
\end{multline}
where  $\vf=(\kgrp_\GLnn)^*$, $\psi_{\vf}$ is the character on $\vf$ given by $\psi_{\vf}(m)=\psi_{\kgrp_\GLnn}(m^*)$
and $\auxone\in\GLnn$ is a suitable element introduced in \S\ref{sec: bepsilon1}.
Substituting this in \eqref{eq: bFE} we obtain
\[
\Bil(W,\d{W})=\int_{\toMd(T'_{\GLn})}\newhalf(W,t)\newhalfd(\d{W},t)\,\frac{dt}{\abs{\det t}}
\]
where
\[
\newhalf(W,t):=\modulus_B^{-\frac12}(\levi(t))\int_{\bigvr}W\left(\levi(t\auxtwo\auxone) w_Uv\right)\psi_{\kgrp}(v)^{-1}\,dv,\ \ t\in \toMd(T'_{\GLn}).
\]
Here,
$\auxtwo\in\GLnn$ is an auxiliary unipotent element chosen so that $\levi(\auxtwo\auxone)w_U$ conjugates $\psi_U$
to a character which is supported on a single (short negative) root.
The sought-after identity \eqref{eq: MI'} is now a consequence of the following two results concerning $(\kgrp,\psi_{\kgrp})$-equivariant linear forms in $W$,
which are proved in \cite{MR3431601}.
\begin{enumerate}
\item $\newhalf(M^*W,t)$ is the constant function $\eps_\pi^{\rkn} \Mint(M^*W)$.
\item $\int_{\toMd(T'_{\GLn})}\newhalf(W_{\frac12},t)\,\frac{dt}{\abs{\det t}}=\eps_\pi^{{\rkn}+1}\Mint(M^* {W})$.
\end{enumerate}
It is here that we use that $\pi\in\Irr_{\meta}\GLnn$ in an essential way (and where the factor $\eps_\pi$ shows up).

This conclude the sketch of the argument. We finish with a few technical remarks about the rigorous justification of the above argument
and also give a few pointers to the upcoming sections.
Hopefully, this well shed some light on (if not motivate) the contents of the remaining sections.
\begin{enumerate}
\item To justify \eqref{eq: BruhatG'}, with $\int_{N'}$ replaced by $\stint_{N'}$ on both sides,
we are forced to assume that $\tilde W(\animg{w_0'tn})$ is compactly supported on $T'\times N'$
(see Lemma \ref{lem: bruhdec}).
This entails imposing a certain support condition on the section $W$ (see Definition \ref{def: spclW} and Lemma \ref{L: Iexp2}).
Fortunately (and this is a key point in our argument), it is enough to prove \eqref{eq: MI'} for a single pair $(W,\alt{W})$
for which the right-hand side is non-zero.
For the special sections as above, the non-vanishing is guaranteed by that
of the Bessel function of $\tilde\pi$, which is proved in the appendix, following ideas of Ichino--Zhang \cite{IZ}.
\item The fact that $\oldhalf$ is well-defined (with $\stint_{N'}$ instead of $\int_{N'}$) relies on results and techniques of Baruch
\cite{MR2192818} (modified to the case at hand in \cite{MR3021791}) on the stability of Bessel functions (Corollary \ref{cor: stbessel}).
This is the reason why we use the Bruhat decomposition with respect to $B'$ (i.e., \eqref{eq: BruhatG'}) rather than $P'$
(i.e., \eqref{eq: lessadv}).
\item Since we cannot control the support of $M^*\alt{W}$, it is necessary to justify \eqref{eq: expforY} for general sections.
To that end, we introduce a complex parameter $s$ and consider the family
\[
\Bil(W,\alt{W},s)=\Bil(W_s,\alt{W}_{-s}).
\]
The equality \eqref{eq: expforY} is then justified for $W_s$ with $\Re s\gg1$. (See Lemma \ref{L: Iexp}.)
This yields the analogue of \eqref{eq: bFE} for $\Bil(W,\alt{W},s)$ provided that $-\Re s\gg1$ and $W$ is special (Lemma \ref{L: defnewBil}).
\item A similar issue arises with the equation \eqref{eq: GLnnFEint}: we can only justify it if at least one of the Whittaker functions
satisfies an additional support constraint (which is in some sense dual to the support condition considered in the first remark).
(See Definition \ref{def: spclWM2} and Proposition \ref{prop: feuse}.)
Thus, we take $W$ to be special of the first kind and $\alt{W}$ to be special of the second kind (see Definition \ref{def: spclW2}).
\item Ultimately, we need to consider $\Bil(W,M(s)\alt{W},s)$ at $s=\frac12$.
In order to exploit the assumptions on $W$ and $\alt{W}$, we need to know that $M(s)$ is self-adjoint with respect to the bilinear form $\Bil$.
Such a relation was proved in \cite[Appendix B]{MR3220931} for a variant of $\Bil$ (where the order of integration is slightly different).
(See Corollary \ref{cor: FEBil}.)
For $-\Re s\gg1$ and $W$ special of the first kind, this agrees with the original $\Bil$.
\item The upshot is an identity
\[
\Bil(W,M(s)\alt{W},s)=\int_{\toMd(T'_{\GLn})}\newhalf(M^*_sW,t)\newhalfd(\alt{W}_s,t)\,\frac{dt}{\abs{\det t}}
\]
for special sections $W$, $\d{W}$ and for $-\Re s\gg1$ (see Proposition \ref{prop: afterfe}).
To finish the argument, we need to show that the right-hand side has analytic continuation at $s$ and compute its value at $s=\frac12$.
These steps are carried out in \cite{MR3431601}.
See \S\ref{sec: define E} where it is also explained why the restriction on $\alt{W}$ is harmless from the point of view of the relation \eqref{eq: MI'}.
\end{enumerate}

\section{A bilinear form} \label{sec: preliminary}
We begin the analysis of the main identity.
A key ingredient is the stability of the integral defining a Bessel function,
which was proved in \cite{MR3021791} following ideas of Baruch \cite{MR2192818}.

\subsection{}

We recall the main result of \cite{MR3021791} for the group $\tilde G$.
\begin{theorem}\label{thm: stable} \cite{MR3021791}
Let $K_0\in\csgr^s(G')$.
Then the integral
\[
\stint_{N'} \tilde W(\twU\animg{\wnM tn})\psi_{\tilde N}(n)^{-1}\,dn
\]
stabilizes uniformly for $\tilde W\in C(\tilde N\bs \tilde G,\psi_{\tilde N})^{K_0}$ and locally uniformly in $t\in T'$.
In particular, if $\tilde W\in\WhitG(\tilde\pi)$ with $\tilde\pi\in\Irr_{\gen,\psi_{\tilde N}}\tilde G$ then
\begin{equation} \label{eq: Bess rltn}
\stint_{N'} \tilde W(\twU\animg{\wnM tn})\psi_{\tilde N}(n)^{-1}\,dn=\Bes_{\tilde\pi}^{\psi_{\tilde N}}(\twU\animg{\wnM t})\tilde W(e)
\end{equation}
where $\Bes_{\tilde\pi}^{\psi_{\tilde N}}$ is the Bessel function of $\tilde\pi$ (see \eqref{eq: defbes}).
\end{theorem}

\begin{remark}
Note that $w_0'=w'_{U'}\wnM$. Of course, one can state the theorem above (equivalently) for
\[
\stint_{N'} \tilde W(\animg{w_0'tn})\psi_{\tilde N}(n)^{-1}\,dn.
\]
The original formulation will be slightly more convenient for computation.
\end{remark}

We will apply this to the function $\tilde W(g)=\nwhitform(W,\tilde g)$.

\begin{corollary} \label{cor: stbessel}
Let $K_0\in\csgr(G)$. Then the integral
\[
\oldhalf(W,t):=\stint_{N'}\nwhitform(W,\twU\animg{\wnM tn})\psi_{\tilde N}( n)^{-1}\,dn, \ \ t\in T'
\]
stabilizes uniformly for $W\in C(N\bs G,\psi_N)^{K_0}$ and locally uniformly in $t\in T'$.
In particular, $\oldhalf(W_s,t)$ is entire in $s\in\C$ and
if $\pi\in\Irr_{\gen}\GLnn$ and $W\in\Ind(\WhitML(\pi))$ then $\oldhalf(M^*_sW,t)$ is meromorphic in $s$.
Both $\oldhalf(W_s,t)$ and $\oldhalf(M^*_sW,t)$ are locally constant in $t$, uniformly in $s\in\C$.

Finally, if we assume that $\pi\in\Irr_{\meta,\gen}\GLnn$ and that $\tilde\pi=\desinv(\pi)$ is irreducible
then for any $\alt{W}\in\Ind(\WhitMLd(\pi))$ we have
\[
\oldhalfd(M^*\alt{W},t)=\Bes_{\tilde\pi}^{\psi_{\tilde N}^{-1}}(\twU\animg{\wnM t})\nwhitformd(M^*\alt{W},e).
\]
\end{corollary}

Another useful consequence of Theorem \ref{thm: stable} is the following.

\begin{lemma} \label{lem: bruhdec}
Let $\tilde W\in C^{\smth}(\tilde N\bs \tilde G,\psi_{\tilde N})$ and $\d{\tilde W}\in C^{\smth}(\tilde N\bs \tilde G,\psi_{\tilde N}^{-1})$.
Assume that the function $(t,n)\in T'\times N'\mapsto\tilde W(\animg{w_0'tn})$ is compactly supported.
(In particular, $\tilde W\in C_c^{\smth}(\tilde N\bs \tilde G,\psi_{\tilde N})$.)
Then the iterated integral
\[
\stint_{N'}\big(\int_{N'\bs G'}\tilde W(\tilde g)\d{\tilde W}(\tilde g\tilde u)\ dg\big)
\psi_{\tilde N}(\tilde u)\ du
\]
is well defined and is equal to
\[
\int_{T'}\int_{N'} \modulus_{B'}(t)\tilde W(\twU\animg{\wnM tn})\psi_{\tilde N}(\animg{n}^{-1})
\big(\stint_{N'}\d{\tilde W}(\twU\animg{\wnM tu})\psi_{\tilde N}(\animg{u})\ du\big)\ \ dn\ dt.
\]
\end{lemma}

\begin{proof}
Indeed, using the Bruhat decomposition we can write the left-hand side as
\[
\stint_{N'}\big(\int_{T'}\int_{N'}\tilde W(\twU\animg{\wnM tn})
\d{\tilde W}(\twU\animg{\wnM tn}\animg{u})\modulus_{B'}(t) \psi_{\tilde N}(\animg{u})\ dn\ dt\big)\ du.
\]
Note that by the assumption on $\tilde W$, the integrand is compactly supported in $t$, $n$.
Consider
\begin{multline*}
\int_{\Omega}\big(\int_{T'}\int_{N'}\tilde W(\twU\animg{\wnM tn})
\d{\tilde W}(\twU\animg{\wnM tn}\animg{u})\modulus_{B'}(t) \psi_{\tilde N}(\animg{u})\ dn\ dt\big)\ du\\=
\int_{T'}\int_{N'} \int_{\Omega}\modulus_{B'}(t)\tilde W(\twU\animg{\wnM tn})
\d{\tilde W}(\twU\animg{\wnM tn}\animg{u})\psi_{\tilde N}(\animg{u})\ du\ dn\ dt
\end{multline*}
for $\Omega\in\csgr(N')$.
By Theorem \ref{thm: stable} this is equal to
\[
\int_{T'}\int_{N'}\modulus_{B'}(t)\tilde W(\twU\animg{\wnM tn}) \big(\stint_{N'}
\d{\tilde W}(\twU\animg{\wnM tn}\animg{u})\psi_{\tilde N}(\animg{u})\ du\big)\ dn\ dt
\]
provided that $\Omega$ is sufficiently large (independently of $t$ and $n$, since they can be confined to compact sets by the assumption on $\tilde W$).
Making a change of variable $u\mapsto n^{-1}u$ we get
\[
\int_{T'}\int_{N'}\modulus_{B'}(t) \tilde W(\twU\animg{\wnM tn})\psi_{\tilde N}(\animg{n}^{-1})
\big(\stint_{N'}\d{\tilde W}(\twU\animg{\wnM tu})\psi_{\tilde N}(\animg{u})\ du\big)\ dn\ dt
\]
as required.
\end{proof}

\subsection{}\label{sec: defspecial}
In order to apply Lemma \ref{lem: bruhdec} for $\nwhitform(W_s,\cdot)$ we make a special choice of $W$.
Consider the $P$-invariant subspace $\Ind(\WhitML(\pi)){\spcl}$ of $\Ind(\WhitML(\pi))$ consisting
of functions supported in the big cell $Pw_UP=Pw_U U$.
Any element of $\Ind(\WhitML(\pi)){\spcl}$ is a linear combination of functions of the form
\begin{equation}\label{eq: spl}
W(u'mw_Uu)=\modulus_P(m)^{\frac12}W^M(m)\phi(u), \ m\in \Levi, u,u'\in U
\end{equation}
with $W^M\in\WhitML(\pi)$ and $\phi\in\swrz(U)$.
Let $\toM$ be the embedding $\toM(g)=\sm{g}{}{}{I_{\rkn}}$ of $\GLn$ into $\GLnn$.
Also let $\toLevi=\levi\circ\toM$ so that $\toLevi(g)=\left(\begin{smallmatrix}g&&\\&I_{2{\rkn}}&\\&&g^*\end{smallmatrix}\right)$.

\begin{definition} \label{def: spclW}
Let $\spclW$ be the linear subspace of $\Ind(\WhitML(\pi)){\spcl}$ generated by
$W$'s as in \eqref{eq: spl} that satisfy the additional property that the function
$(t,n)\mapsto W^\Levi(\toLevi(t\wn n))$ is compactly supported on $T'_{\GLn}\times N'_{\GLn}$,
or equivalently, that the function $W^\Levi\circ\toLevi$ on $\GLn$
is supported in the big cell $B'_{\GLn}\wn N'_{\GLn}$ and its support is compact modulo $N'_{\GLn}$.
\end{definition}

This space is nonzero, see the proof of Lemma~\ref{lem: Bnonvanish} below.
It is also invariant under $\toG(T')\ltimes N$.

\begin{lemma} \label{L: Iexp2}
For any $W\in\spclW$, the function
$\nwhitform(W_s, \twU\animg{\wnM tn})$ is compactly supported in $t\in T'$ and $n\in N'$ uniformly in $s\in\C$.
\end{lemma}

\begin{proof}
Since $\Phi*W\in\spclW$ whenever $W\in\spclW$ (for any $\Phi\in\swrz(F^\rkn)$), it is enough to show (in view of the definition of $\nwhitform$,
upon replacing the domain of integration in \eqref{eq: defwhitform} by $\toG(w'_{U'})V_U\toG(w'_{U'})^{-1}$)
that for any $W\in\spclW$, the function $(v,t,n)\mapsto W(\gamma \toG(w'_{U'})v\toG(\wnM tn))$ is compactly supported in $V_U\times T'\times N'$.
Write $t=\levi'(t')$ and $n\in N'$ as $n=\levi'(n_1')n_2$ with $n_1\in N'_{\GLn}$ and $n_2\in U'$.
Also let $m'=\wn t'n_1'\in\GLn$ and $m=\toG(\levi'(m'))=\toG(\wnM t\levi'(n_1'))$. Then
\[
W(\gamma \toG(w'_{U'})v\toG(\wnM tn))=W(w_Uvm\toG(n_2))=W(\toLevi((m')^*)w_U m^{-1}vm\eta(n_2)).
\]
It follows immediately from the definition of the space $\spclW$ that
 this function is compactly supported in $(v,n_1, n_2, t')\in V_U\times N'_{\GLn}\times U'\times T'_{\GLn}$.
 \end{proof}

\subsection{} \label{sec: defBil}

For $W \in \spclW$ and $\d{W}\in \Ind(\WhitMLd(\d\pi))$ we define
\[
\Bil(W,\d{W},s):=\stint_{N'}\big(\int_{N'\bs G'}\nwhitform(W_s,\tilde g)\nwhitformd(\d{W}_{-s} ,\tilde g\tilde u)\ dg\big)
\psi_{\tilde N}(\tilde u)\ du.
\]
By Lemma \ref{L: Iexp2} and the proof of Lemma \ref{lem: bruhdec}, $\Bil(W,\d{W},s)$ is an entire function of $s$ and we have
\begin{equation} \label{eq: afterbruhat}
\Bil(W ,\d{W},s)=\int_{T'}\oldhalf(W_s,t)\oldhalfd(\d{W}_{-s},t)\modulus_{B'}({t})\,dt
\end{equation}
for any $W \in \spclW$ and $\d{W}\in \Ind(\WhitMLd(\pi))$.

Assume that $\pi\in\Irr_{\gen,\meta}\GLnn$ and $\tilde\pi=\desinv(\pi)$ is irreducible.
Then for any $W \in \spclW$ and $\alt{W}\in \Ind(\WhitMLd(\pi))$,
\begin{equation} \label{eq: LHSofMI}
\text{the left-hand side of the Main Identity \eqref{eq: MI'} is }\Bil(W,M(\frac12)\alt{W},\frac12).
\end{equation}

In the next section we will obtain a more explicit form of $\Bil(W,\d{W},s)$ for $W\in\spclW$.

\section{Further analysis}

\subsection{The function $\nwhitform(W_s,\animg{g})$} \label{sec: expwhitform}

At this point it is necessary to explicate $\nwhitform(W_s,\cdot)$ on the big cell.

We start with explicating $\nwhitform(W,e)$, which is easier and useful for the right-hand side of \eqref{eq: main2}.

Fix an element $\auxthree\in V$ of the form $\toU_\Levi(X)$ where $X\in\Mat_{\rkn}$ and the last row of $X$ is $-\xi_\rkn$.
For any $W\in C^{\smth}(N\bs G,\psi_N)$ define
\begin{equation}\label{eq: defMint}
\Mint(W):=\int_{V_\gamma\bs \Vm} W(\gamma v\auxthree )\psi_{\Vm}(\auxthree^{-1} v \auxthree )^{-1}\, dv.
\end{equation}
Since $\psi_{\Vm}(\auxthree^{-1} v \auxthree )=\psi_{\Vm}(v)\psi(v_{\rkn,2\rkn+1})$, this expression is clearly independent of the choice of $\auxthree$ as above.
Moreover, we have
\begin{multline}\label{eq: defMintvar}
\Mint(W)=\int_{(V_\gamma\rtimes\toG(N'))\bs \kgrp} W(\gamma n\auxthree )\psi_{\kgrp}(\auxthree^{-1} n \auxthree )^{-1}\, dn
\\=\int_{\auxthree^{-1}(V_\gamma\rtimes\toG(N'))\auxthree\bs \kgrp} W(\gamma \auxthree n )\psi_{\kgrp}( n )^{-1}\, dn.
\end{multline}
The integrand in the first equation is invariant under $\toG(N')$ since the character
$\psi_{\kgrp}(\auxthree^{-1} \toG(u) \auxthree)$ is trivial on $U'$ and agrees with $\psi_{N'_{\Levi'}}$ on $N'_{\Levi'}$.

\begin{lemma}\label{L: Mint}
For any $W\in C^{\smth}(N\bs G,\psi_N)$,
the integrand in \eqref{eq: defMint} is compactly supported on $V_\gamma\bs \Vm$ and we have $\nwhitform(W,e)=\Mint(W)$.
\end{lemma}

\begin{remark}
Note that by Lemma \ref{lem: equnwhit}, the linear form $W\mapsto\nwhitform(W,e)$ is $(\kgrp,\psi_{\kgrp})$-equivariant.
By \eqref{eq: defMintvar} the same is true for $\Mint(W)$.
\end{remark}

\begin{proof}
The support condition follows from Lemma \ref{lem: support subsym}. 
By \eqref{eq: defwhitform} we have
\[
\whitform(W,\Phi,e)=\int_{V_\gamma\bs V} W(\gamma v)\wevinv(v)\Phi(\xi_{\rkn})\,dv.
\]
We can write this as
\[
\int_{\Vp}\int_{V_\gamma\bs \Vm} W(\gamma vc)\wevinv(vc)\Phi(\xi_{\rkn})\,dv\ dc
\]
which by \eqref{eq: weilH1}--\eqref{eq: weilH3} is equal to
\[
\int_{\Vp}\int_{V_\gamma\bs \Vm} W(\gamma vc)\psi_{\Vm}(v)^{-1}\psi(v_{\rkn,2\rkn+1})^{-1}\Phi(\underline{c}+\xi_{\rkn})\,dv\ dc.
\]
Changing variables in $c$ and $v$ we can rewrite this as
\begin{multline*}
\int_{\Vp}\int_{V_\gamma\bs \Vm} W(\gamma v\auxthree c)\Phi(\underline{c})\psi_{\Vm}(v)^{-1}\psi(v_{\rkn,2\rkn+1})^{-1}\,dv\ dc=\\
\int_{V_\gamma\bs \Vm} (\Phi*W)(\gamma v\auxthree)\psi_{\Vm}(v)^{-1}\psi(v_{\rkn,2\rkn+1})^{-1}\,dv\ dc.
\end{multline*}
This is $\Mint(\Phi*W)$ as $\psi_{\Vm}(\auxthree^{-1} v \auxthree )=\psi_{\Vm}(v)\psi(v_{\rkn,2\rkn+1})$.
\end{proof}

We now explicate $\nwhitform(W_s,\cdot)$ on the big cell. By Lemma \ref{lem: equnwhit} it is enough to consider the element $w'_{U'}$.

\begin{lemma} \label{lem: onw'U}
Let $\pi\in\Irr_{\gen}\Levi$.
Then for $\Re s\gg_{\pi}1$ and any $W\in\Ind(\WhitML(\pi))$ we have
\begin{multline} \label{eq: nwhitbcg}
\nwhitform(W_s,\twU)=
\beta_{\psi^{-1}}(w'_{U'})\int_{V_U}W_s(w_Uv)\psi_U(v)^{-1}\,dv\\=
\beta_{\psi^{-1}}(w'_{U'})\int_{\altV_{\Levi}\bs \Vm}W_s(w_Uv)\psi_{\Vm}(v)^{-1}\ dv.
\end{multline}
\end{lemma}

\begin{remark}
It is easy to see that both sides of \eqref{eq: nwhitbcg} are $(\Vm\rtimes\toG(N'_{\Levi'}),\psi_{\Vm}\psi_{N'_{\Levi'}}^{-1})$-equivariant in $W$.
\end{remark}

\begin{proof}
From \eqref{eq: defwhitform},
\[
\whitform(W_s,\Phi,\twU)=
\int_{V_\gamma\bs V}W_s(\gamma v \toG(w'_{U'}))\wevinv(v\twU)\Phi(\xi_{\rkn})\ dv.
\]
Make a change of variable $v\mapsto \toG(w'_{U'})v\toG(w'_{U'})^{-1}$.
Note that $V_\gamma=\toG(w'_{U'})V_{\Levi}\toG(w'_{U'})^{-1}$, $\toG(w'_{U'})$ normalizes $V$, and $V=V_{\Levi}\ltimes V_U$.
By \eqref{eq: weilext2} we infer that
\[
\whitform(W_s,\Phi,\widetilde{w'_{U'}})=\int_{V_U}W_s(w_Uv)\wevinv(\widetilde{w'_{U'}}v)\Phi(\xi_{\rkn})\ dv
\]
and using \eqref{eq: weil3} we get
\[
\beta_{\psi^{-1}}({w'_{U'}})\int_{V_U}\big(\int_{F^{\rkn}} W_s(w_Uv)\wevinv(v)\Phi(Y)\psi(Y_1)^{-1}\,dY \big)\ dv.
\]
We claim that the double integral converges absolutely when $\Re s$ is large enough.
Note that $V_U=\{\toU(\sm{x}{y}{}{\startran{x}}),\ x\in \Mat_\rkn, y\in \symspace_{\rkn}\}$
and hence $\abs{\wevinv(v)\Phi(Y)}=\abs{\Phi(Y)}$ for $v\in V_U$. Thus,
\[
\int_{V_U}\int_{F^{\rkn}} \abs{W_s(w_Uv)\wevinv(v)\Phi(Y)\psi(Y_1)^{-1}}\,dY \ dv=
\widehat{\abs{\Phi}}(0)\int_{V_U} \abs{W_s(w_Uv)} \ dv.
\]
The integration on the right-hand side is absolutely convergent when $\Re s\gg_{\pi} 1$.
This follows from the convergence of $\int_U \abs{W_s(w_Uu)} \ du$ and Remark \ref{rem: partint}.

For $c\in \Vp$ such that $\underline{c}=Y\in F^{\rkn}$, we have $W_s(w_U cg)=\psi(Y_1)^{-1}W_s(w_Ug)$
by the equivariance of $W$, and $\wevinv(c)\Phi(0)=\Phi(Y)$ by \eqref{eq: weilH1}. Thus,
\[
\beta_{\psi^{-1}}({w'_{U'}})^{-1}\whitform(W_s,\Phi,\widetilde{w'_{U'}})=\int_{V_U}\int_{\Vp} W_s(w_Ucv)\wevinv(cv)\Phi(0) \,dc\ dv.
\]
Since $\Vp$ normalizes $V_U$ and the double integral converges we can write the last integral as
\begin{multline*}
\int_{V_U}\int_{\Vp}W_s(w_Uvc)\wevinv(vc)\Phi(0)\ dc\ dv\\=
\int_{V_U}\int_{\Vp}W_s(w_Uvc)\psi_{\Vm}(v)^{-1}\Phi(\underline{c})\ dc\ dv=
\int_{V_U}(\Phi*W_s)(w_Uv)\psi_{\Vm}(v)^{-1}\ dv.
\end{multline*}
This give the first equality in \eqref{eq: nwhitbcg}, and the second one follows from it since the integrand is left $\altV_{\Levi}$-invariant.
\end{proof}

\begin{remark} \label{rem: genbc}
One can show that for any $W\in C^{\smth}(N\bs G,\psi_N)$ we have
\begin{equation} \label{eq: nwhitbc}
\nwhitform(W,\widetilde{w'_{U'}})=\beta_{\psi^{-1}}(w'_{U'})
\int_{V''\bs \Vm}\big(\int_{\altV_{\Levi}\bs V''}W(w_Uvx)\psi_{\Vm}(vx)^{-1}\,dv\big)\,dx
\end{equation}
where on the right-hand side $V''$ is the preimage of the center of the Heisenberg group under $v\mapsto \Hei{v}$
and in both the inner and outer integral the integrand is compactly supported, thus convergent.
We will not give the details since we are not going to use this result.
\end{remark}

\subsection{}
We can now explicate $\oldhalf(W_s,t)$ for $\Re s\gg1$.

\begin{lemma}\label{L: Iexp}
Let $\pi\in \Irr_{\gen}\Levi$. For $\Re s\gg_{\pi}1$ and any $W\in \Ind(\WhitML(\pi))$, $t\in T'$ we have the identity
\begin{equation}\label{eq: Iexp}
\oldhalf(W_s,t)=
\wgt{t}^{{\rkn}-\frac12}\beta_{\psi^{-1}}(w'_{U'})\beta_{\psi^{-1}}(\wnM t)
\int_{\kgrpq}W_s(w_U\toG( \wnM t)v)\psi_{\kgrp}(v)^{-1}\,dv
\end{equation}
where the right-hand side is absolutely convergent.
\end{lemma}

\begin{remark}
Clearly, both sides of \eqref{eq: Iexp} are $(\kgrp,\psi_{\kgrp})$-equivariant in $W$.
\end{remark}

We first need the following convergence result.

\begin{lemma}\label{L:conv}
Let $\pi\in\Irr_{\gen}\Levi$. Then for $\Re s\gg_{\pi}1$ we have
\begin{subequations}
\begin{gather}
\label{conv: row}\int_{N_\GLnn^t\cap\mira}\int_U\abs{W_s \left(\levi(n)w_Uug \right)}\,du\,d n<\infty,\\
\label{conv: column} \int_{N_\GLnn^t\cap\mira^*}\int_U\abs{W_s \left(\levi(n)w_Uug \right)}\,du\,d n<\infty,
\end{gather}
\end{subequations}
for any $W\in\Ind(\WhitML(\pi))$, $g\in G$.
\end{lemma}

\begin{proof}
We may assume without loss of generality that $g=e$.
Denote by $\levi(a(x))$ the torus part in the Iwasawa decomposition of $x\in G$.
Let $\mnrs_i(x)$ be the set of non-zero $i\times i$ minors in the last $i$ rows of $x$ (with the convention that $\mnrs_0(x)=\{1\}$).
Then $\mnrs_i(\levi(a(x)))$ is (obviously) a singleton and its absolute value is $\max_{\delta\in\Delta_i(x)}\abs{\delta}$.

For any $n\in N_\GLnn^t$ and $u\in U$ consider
\[
a(\levi(n)w_Uu)=\diag(a_1, \dots,a_{2{\rkn}}).
\]
Notice that
\begin{equation}\label{eq: sameminor}
\prod_{i=1}^{2{\rkn}}\abs{a_i}=\abs{\det(a(\levi(n)w_Uu))}=\abs{\det(a(w_Uu))}\le1
\end{equation}
and for all $1\le j\le i\le 2\rkn$ we have $n_{i,j}\in \{0\}\cup\Delta_{i-1}(\levi(n)w_Uu)$
(since $n_{i,j}$ is an entry in the adjugate of $n^*$).
Hence
\begin{equation} \label{eq: uprbndnij}
\abs{n_{i,j}}\le\prod_{k=1}^{i-1}\abs{a_k}^{-1}.
\end{equation}
We will show below that for all $n\in N_\GLnn^t\cap (\mira\cup\mira^*)$ and $u\in U$ such that $W(\levi(n)w_Uu)\ne0$ we have
\begin{equation}\label{eq: boundnentry}
 \abs{\det(a(w_Uu))} \ll_{W} \abs{a_i}\ll_{W} \abs{\det(a(w_Uu))}^{2-2\rkn},\ \ \ i=1,\dots 2\rkn.
\end{equation}

Assuming this, we first show the lemma.
It follows from \eqref{eq: sameminor} and \eqref{eq: boundnentry} that there exists $\lambda\in\R$, depending only on $\pi$ such that
\begin{equation}\label{conv: leviind}
\abs{W_s(\levi(n)w_Uu)}\ll_W\abs{\det(a(w_Uu))}^{\Re s-\lambda}
\end{equation}
for any $u\in U$ and $n\in N_\GLnn^t\cap(\mira\cup\mira^*)$. Together with \eqref{eq: uprbndnij} and \eqref{eq: boundnentry} we obtain
$\abs{n_{i,j}}\ll_W  \abs{\det(a(w_Uu))}^{1-i}$ whenever $W(\levi(n)w_Uu)\ne0$. Thus,
\[
\int_{N_\GLnn^t\cap\mira}\int_U\abs{W_s(\levi(n)w_Uu)}\ du\ dn
\ll_W \int_U\abs{\det(a(w_Uu))}^{\Re s-\lambda-\sum_{i=1}^{2\rkn-1}i^2}\ du
\]
(and similarly for $\int_{N_\GLnn^t\cap\mira^*}$)
where the last integral is finite when $\Re s\gg_\pi 1$ by a standard result on intertwining operators.

We are left to prove \eqref{eq: boundnentry}. First, consider the case when $n\in N_\GLnn^t\cap\mira$.
Then the $2{\rkn}$-th row of $\levi(n)w_Uu$ is $\xi_{4\rkn}$.
Thus, $\mnrs_{2\rkn+1}(\levi(n)w_Uu)\subset\mnrs_{2\rkn}(\levi(n)w_Uu)$ and hence
\[
\abs{a_{2{\rkn}}}\prod_{i=1}^{2{\rkn}}\abs{a_i}^{-1}\le\prod_{i=1}^{2{\rkn}}\abs{a_i}^{-1},
\]
or equivalently $\abs{a_{2{\rkn}}}\le1$.
Thus, if $W(\levi(n)w_Uu)\ne0$ then $\abs{a_1}\ll_W \cdots\ll_W\abs{a_{2{\rkn}}}\le 1$.
The relation \eqref{eq: boundnentry} follows (since $1\leq \abs{\det(a(w_Uu))}^{2-2\rkn}$).

Next, consider the case $n\in N_\GLnn^t\cap\mira^*$. The last row of
$\levi(n)w_Uu$ is the same as the last row of $w_Uu$. Thus, $\abs{a_1}^{-1}$ is bounded above by the norm of the
entries in $u$, which in turn is bounded above by $\abs{\det(a(w_Uu))}^{-1}$ (since each non-zero entry of $u$ belongs to $\mnrs_{2\rkn}(w_Uu)$).
Again, if $W(\levi(n)w_Uu)\ne0$ then $\abs{a_1}_W\ll \cdots\ll_W\abs{a_{2{\rkn}}}$. From \eqref{eq: sameminor}
we conclude that $\abs{\det(a(\levi(n)w_Uu))}\ll\abs{a_1}\ll_W\abs{a_i}$ for all $i$.
On the other hand,
\begin{multline*}
\abs{\det(a(w_Uu))}^{2\rkn-2}=\abs{a_1}^{2\rkn-2}\abs{a_2\dots a_{2\rkn-1}}^{2\rkn-2}\abs{a_{2\rkn}}^{2\rkn-2}\ll_W
\abs{a_2\dots a_{2\rkn-1}}^{2\rkn-1}\abs{a_{2\rkn}}^{2\rkn-2}\\=
\abs{a_2\dots a_{2\rkn}}^{2\rkn-1}\abs{a_{2\rkn}}^{-1}\le\abs{a_{2\rkn}}^{-1}
\end{multline*}
and the relation \eqref{eq: boundnentry} follows.
\end{proof}

\begin{remark}
From the proof it is clear that we can take a uniform region of convergence $\Re s\gg1$ for all unitarizable $\pi$.
\end{remark}

Lemma \ref{L: Iexp} is an immediate consequence of Lemmas \ref{lem: onw'U}, \ref{lem: equnwhit} and \ref{L:conv}.
Indeed, the right-hand side of \eqref{eq: Iexp} can be viewed as a `partial integration'
of the double integral \eqref{conv: row} for $g=\toG(\wnM t)$.
Thus, by Remark \ref{rem: partint}, it is absolutely convergent for $\Re s\gg_\pi1$.
Meanwhile by Lemma  \ref{lem: equnwhit},
\begin{multline*}
\oldhalf(W_s,t)=\int_{N'}\nwhitform(W_s,\twU\animg{\wnM tn})\psi_{\tilde N}( n)^{-1}\,dn
\\=\wgt{t}^{-\frac12}\beta_{\psi^{-1}}(\wnM t)\int_{N'}\nwhitform(W_s(\cdot \toG(\wnM tn)),\twU)\psi_{\tilde N}( n)^{-1}\,dn
\end{multline*}
assuming the integral converges. From \eqref{eq: nwhitbcg} this is
\[
\wgt{t}^{-\frac12}\beta_{\psi^{-1}}(\wnM t)\beta_{\psi^{-1}}(w'_{U'})\int_{N'}\int_{V_U}W_s(w_Uv\toG(\wnM tn))\psi_U(v)^{-1}\psi_{\tilde N}( n)^{-1}\,dv\,dn.
\]
Conjugating $v$ over $\toG(\wnM t)$ and combining the two integrals (which by the above, converge) we get Lemma \ref{L: Iexp}.
(We note that the integrand on the right-hand side of \eqref{eq: Iexp} is indeed left $\altV_{\Levi}$-invariant.)

\subsection{}
We now go back to the bilinear form $\Bil$ defined in \S\ref{sec: defBil}.
Recall the definition of the space $\spclW$ in \S\ref{sec: defspecial}.
\begin{lemma}\label{lem: Iexp2}
For $W\in\spclW$ the integrand on the right-hand side of \eqref{eq: Iexp} is compactly supported in $t, v$ uniformly in $s$
(i.e., the support in $(t,v)$ is contained in a compact set which is independent of $s$).
In particular, the identity \eqref{eq: Iexp} holds for all $s\in\C$.
\end{lemma}

\begin{proof}
Suppose that $W\in\spclW$ is of the form \eqref{eq: spl}.
Then for $n\in N'_{\GLn}$, $u\in U$ and $t\in T'_{\GLn}$, we have $\toG(\levi'( \wn t n))\in \Levi$ and
\[
W\left(w_U\toG(\levi'( \wn t n))u\right)=\modulus_P(\toG(t))^{-\frac12}W^\Levi(\toLevi((\wn t n)^*))\phi(u).
\]
The lemma follows from the definition of $\spclW$.
\end{proof}

\begin{lemma}\label{L: defnewBil}
Let $\pi\in\Irr_{\gen}\Levi$. Then for $-\Re s\gg1$ we have
\begin{multline*}
\Bil(W ,\d{W},s)=\int_U\int_{\kgrpq}\int_{N'_{\GLn}\bs\GLn}
W _s\left(\toLevi(g)w_Uv\right)\d{W}_{-s}\left(\toLevi(g)w_Uu\right)\\\modulus_P(\toLevi(g))^{-1}\abs{\det g}^{1-{\rkn}}
\psi_{\kgrp}(v)^{-1}\psi_U(u) \,dg\,dv\,du
\end{multline*}
for any $W\in\spclW$, $\d{W}\in\Ind(\WhitMLd(\d\pi))$, with the integral being absolutely convergent.
\end{lemma}

\begin{proof}
Suppose that $-\Re s\gg1$ and $W\in\spclW$. Then by \eqref{eq: afterbruhat}, Lemmas \ref{L: Iexp} and \ref{lem: Iexp2}, and the fact that
$\modulus_{B'}(t)=\modulus_{B'_{\Levi'}}(t)\wgt{t}^{{\rkn}+1}$, $\Bil(W ,\d{W},s)$ equals
\begin{multline*}
\int_{T'_{\GLn}}\int_{\toG(N'_{\Levi'})\ltimes U}\int_{\toG(N'_{\Levi'})\ltimes U}
W _s\left(\toLevi(( \wn t)^*) w_Uv_1\right)
\d{W}_{-s}\left(\toLevi(( \wn t)^*) w_Uv_2 \right)
\\ \abs{\det t}^{3{\rkn}}\modulus_{B'_{\GLn}}(t)\psi_{\kgrp}(v_1)^{-1}\psi_{\kgrp}(v_2) \,dv_1\,dv_2\,dt
\end{multline*}
where the integral is absolutely convergent. Here we used $\toG(N'_{\Levi'})\ltimes U$ as a section of $\kgrpq$.

Making a change of variable $v_1\mapsto v_2v_1$
and writing $v_2=\toG(\levi'(n))u$ with $n\in N'_{\GLn}$ and $u\in U$,
we get that $\Bil(W,\d{W},s)$ is equal to
\begin{multline*}
\int_{T'_{\GLn}}\int_{N'_{\GLn}}\int_U\int_{\toG(N'_{\Levi'})\ltimes U}
W _s\left(\toLevi( (\wn tn)^*)w_U uv_1\right)
\d{W}_{-s}\left(\toLevi((\wn tn)^*) w_U u \right)
\\ \abs{\det t}^{3{\rkn}}\modulus_{B'_{\GLn}}(t)\psi_{\kgrp}(v_1)^{-1} \,dv_1\,du\,dn\,dt.
\end{multline*}
Make a further change of variable $v_1\mapsto u^{-1}v_1$. It remains to use the Bruhat decomposition for $\GLn$ and to note that
$\modulus_P(\toLevi(g))=\abs{\det g}^{2{\rkn}+1}$ for any $g\in\GLn$ and that as a function of $v$, the integrand in the statement of the lemma
is left $\altV_{\Levi}$-invariant.
\end{proof}

Define when convergent
\[
\semipair{W}{\d{W}}:=\int_{N'_{\GLn}\bs\GLn}W(\toLevi(g))\d{W}(\toLevi(g))\modulus_P(\toLevi(g))^{-1}\abs{\det g}^{1-{\rkn}}\ dg.
\]
Then we get for any $W\in\spclW$, $\d{W}\in\Ind(\WhitMLd(\d\pi))$ and when $-\Re s\gg1$
\begin{equation}\label{eq: explicitBil}
\Bil(W ,\d{W},s)=\int_U\int_{\kgrpq}\semipair{W_s(\cdot w_Uv)}{\d{W}_{-s}(\cdot w_Uu)}\psi_{\kgrp}(v)^{-1}\psi_U(u) \,dv\,du.
\end{equation}

\begin{remark}
If $\pi$ is unitarizable then the integral defining $\semipair{W}{\d{W}}$ is absolutely convergent
for any $(W,\d{W})\in \Ind(\WhitML(\pi))\times\Ind(\WhitMLd(\d\pi))$ by \cite[Lemma 1.2]{MR3220931}.
\end{remark}

\subsection{}
We will need a non-vanishing result which follows from Theorem \ref{thm: nonvanishingBesselfunction} of Appendix \ref{sec: nonvanishing}.

\begin{lemma} \label{lem: Bnonvanish}
Assume that $\pi\in\Irr_{\gen,\meta}\Levi$ and $\tilde\pi=\desinv(\pi)$ is irreducible and tempered.
Then the bilinear form $\Bil(W ,M(\frac12)\alt{W},\frac12)$ does not vanish identically on $\spclW\times\Ind(\WhitMLd(\pi))$.
\end{lemma}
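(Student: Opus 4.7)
\emph{Plan.} My plan is to use the formula \eqref{eq: Bessel and oldhalf} from Corollary~\ref{C: oldhalf} to factor $\Bil(W,M(\frac12)\alt{W},\frac12)$ as a product of a $\alt{W}$-dependent factor and a $W$-dependent factor, and then verify that each factor is non-vanishing.

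Substituting \eqref{eq: Bessel and oldhalf} into the definition \eqref{eq: defbil} of $\Bil$ and pulling the $t$-independent constant $\Mintd(M^*\alt{W})$ outside the integral gives
\[
\Bil(W,M(\tfrac12)\alt{W},\tfrac12) = \Mintd(M^*\alt{W})\cdot\mathcal{L}(W),
\]
where
\[
\mathcal{L}(W):=\int_{T'}\oldhalf(W,\tfrac12;t)\,\wgt{t}^{\rkn-\frac12}\,\kappa_{t,\psi^{-1}}\,\Bes_{\tilde\pi}^{\psi_{\tilde N}^{-1}}(\tilde w_0'\tilde t)\,\modulus_{B'}(t)\,dt.
\]
By Lemma~\ref{L: Iexp2} the integrand is compactly supported in $t$, so there is no convergence issue.

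The first factor is non-zero for a suitable choice of $\alt{W}$: the non-vanishing $\desinv(\pi)\ne 0$ (cf.~\cite[Theorem in \S1.3]{MR1671452}) supplies $\alt{W}_0\in\Ind(\WhitMLd(\pi))$ and $\d{\Phi}\in\swz(F^\rkn)$ with $\whitformd(M(\tfrac12)\alt{W}_0,\d{\Phi},e,-\tfrac12)\ne 0$. By Lemma~\ref{L: Mint} this value equals $\Mintd(M^*(\d\Phi*_{\frac12}\alt{W}_0))$, so taking $\alt{W}:=\d\Phi*_{\frac12}\alt{W}_0$ yields $\Mintd(M^*\alt{W})\ne 0$.

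For the second factor I would invoke Theorem~\ref{thm: nonvanishingBesselfunction} of Appendix~\ref{sec: nonvanishing}: since $\tilde\pi$ is irreducible and generic, its Bessel function $\Bes_{\tilde\pi}^{\psi_{\tilde N}^{-1}}(\tilde w_0'\tilde t)$ is non-zero on some non-empty open set $\Omega\subset T'$ on which, shrinking $\Omega$ if necessary, all the remaining continuous weights $\wgt{t}^{\rkn-\frac12}\kappa_{t,\psi^{-1}}\modulus_{B'}(t)$ in the integrand are non-vanishing. It then suffices to exhibit $W\in\spclW$ with $\oldhalf(W,\tfrac12;\cdot)$ supported in $\Omega$ and not identically zero there. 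Using the form \eqref{eq: spl}, one picks $\phi\in\swrz(U)$ with $\int_{\bar U}\phi(\toU(-v))\oldUbar(v)\,dv\ne 0$, then uses the freedom in the choice of $W^\Levi$ (whose pullback $W^\Levi\circ\toLevi$ is an arbitrary compactly supported smooth function on the big cell $B_{\GLn}\wn N_{\GLn}$) to localize $\oldhalf(W,\tfrac12;\cdot)$ arbitrarily near any prescribed point of $T'$. This forces $\mathcal{L}(W)\ne 0$.

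The main obstacle will be the last step — showing that $W\in\spclW$ has enough flexibility for $\oldhalf(W,\frac12;\cdot)$ to realise a prescribed non-zero smooth compactly supported function on $T'$. Concretely this requires carefully unwinding the expression from Lemma~\ref{L: Iexp} for $W$ of the special form \eqref{eq: spl} in order to see that, up to an invertible smooth factor in $t$ and the non-vanishing $\bar U$-integral of $\phi$ against $\oldUbar$, the function $\oldhalf(W,\frac12;t)$ reduces to a Whittaker-type integral $\int_{N'_{\Levi'}}W^\Levi(\toG(\wnM tn)^\flipM)\psi_{N'_{\Levi'}}(n)^{-1}\,dn$ in the $W^\Levi$ variable, whose $t$-localisation is controlled by the (arbitrary) support of $W^\Levi\circ\toLevi$.
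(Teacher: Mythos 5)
Your argument is correct and is essentially the paper's own proof: both rest on \eqref{eq: Bessel and oldhalf}, the Bessel non-vanishing of Theorem~\ref{thm: nonvanishingBesselfunction}, and the flexibility of $W\mapsto\oldhalf(W,\frac12;\cdot)$ on $\spclW$ (your ``freedom in $W^\Levi$''), which the paper makes precise as surjectivity onto $\swrz(T')$, justified by the standard fact that the restriction of $\WhitML(\pi)$ to the mirabolic contains $\swrz(N_\Levi\bs\levi(\mira),\psi_{N_\Levi})$. The differences are cosmetic: you factor $\Bil(W,M(\frac12)\alt{W},\frac12)=\Mintd(M^*\alt{W})\,\mathcal{L}(W)$ first and make the non-vanishing of $\Mintd(M^*\alt{W})$ explicit via the non-vanishing of the descent, and, since the weights in $\mathcal{L}$ are locally constant but not positive, it is indeed the prescription of $\oldhalf(W,\frac12;\cdot)$ (as in your final paragraph), not merely its non-vanishing on $\Omega$, that rules out cancellation.
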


\begin{proof}
Since the image of the restriction map
$\WhitML(\pi)\rightarrow C(N_\Levi\bs\levi(\mira),\psi_{N_\Levi})$ contains
$\swrz(N_\Levi\bs\levi(\mira),\psi_{N_\Levi})$,
it follows that for any $\varphi\in \swrz(T'_{\GLn}\times N'_{\GLn})$ and $\phi\in\swrz(U)$ there exists
$W\in\Ind(\WhitML(\pi)){\spcl}$ (necessarily in $\spclW$) of the form \eqref{eq: spl} such that $\varphi(t,n)=W^\Levi(\toLevi((t\wn n)^*))$.
It follows from Lemma \ref{lem: Iexp2} that the linear map $\spclW\rightarrow \swrz(T')$ given by
$W\mapsto\oldhalf(W_{\frac12},\cdot)$ is onto.
Therefore, by \eqref{eq: afterbruhat}, the lemma amounts to the non-vanishing of
the linear form $\alt{W}\mapsto\oldhalfd(M^*\alt{W},t)$ on $\Ind(\WhitMLd(\pi))$ for some $t\in T'$.
This follows from Theorem \ref{thm: nonvanishingBesselfunction} and the last part of Corollary \ref{cor: stbessel}.
\end{proof}

\subsection{}
By \eqref{eq: LHSofMI}, Lemma \ref{L: Mint} and Lemma \ref{lem: Bnonvanish}, we conclude
\begin{corollary} \label{cor: Bnonvanish}
Suppose that $\pi\in\Irr_{\meta,\temp}\Levi$ is \good\
and $\tilde\pi=\desinv(\pi)$ is tempered.
Then
\[
\Bil(W ,M(\frac12)\alt{W},\frac12)=c_\pi\Mint(M^*W)\Mintd(M^*\alt{W})
\]
for all $W \in\spclW$ and $\alt{W}\in\Ind(\WhitMLd(\pi))$. Moreover, the linear form $\Mint(M^*W)$ does not vanish identically on $\spclW$.
\end{corollary}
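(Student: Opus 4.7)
My plan is to deduce the identity by feeding the definition of goodness \eqref{eq: main1} into the two rewriting results at our disposal --- Proposition \ref{P: Bs} on the left-hand side and Lemma \ref{L: Mint} on the right-hand side --- and then eliminating the auxiliary Schwartz data via an approximate identity argument. The non-vanishing clause will then be an immediate consequence of Lemma \ref{lem: Bnonvanish}.

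First I would start from \eqref{eq: main1}, which is available for every $W\in\Ind(\WhitML(\pi))$, $\alt{W}\in\Ind(\WhitMLd(\pi))$ and every $\Phi,\d{\Phi}\in\swrz(F^{\rkn})$ since $\pi$ is \good. Specializing to $W\in\spclW$ and noting that $\tilde W:=\whitformd(M(\tfrac12)\alt{W},\d{\Phi},\cdot,-\tfrac12)\in\WhitGd(\tilde\pi)$, I would apply Proposition \ref{P: Bs} to recognize the left-hand side of \eqref{eq: main1} as $\Bil(\Phi*_{\frac12}W,\,M(\tfrac12)(\d{\Phi}*_{\frac12}\alt{W}),\,\tfrac12)$, and apply Lemma \ref{L: Mint} together with the commutation \eqref{eq: MphiW} to recognize the right-hand side as $c_\pi\Mint(M^{*}(\Phi*_{\frac12}W))\Mintd(M^{*}(\d{\Phi}*_{\frac12}\alt{W}))$. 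This yields the desired identity, but with $W$ and $\alt{W}$ replaced by their convolutions against $\Phi$ and $\d{\Phi}$.

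Next I would strip off $\Phi$ and $\d{\Phi}$. Since $\Phi*_{\frac12}W$ lies in $\spclW$ whenever $W$ does (observed right after the definition in \S\ref{sec: defspecial}), and since $\wgtt{\cdot}$ is trivial on $X_{\rkn}$, the operator $\Phi*_{\frac12}$ reduces to right convolution of the smooth function $W$ by $\Phi$ along the abelian group $X_{\rkn}\cong F^{\rkn}$. Taking $\Phi$ to be the normalized characteristic function of a sufficiently small compact open neighborhood of the origin in $F^{\rkn}$ gives $\Phi*_{\frac12}W=W$, and the same device applied to $\d{\Phi}$ disposes of $\d{\Phi}*_{\frac12}\alt{W}$. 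This leaves exactly
\[
\Bil(W,M(\tfrac12)\alt{W},\tfrac12)=c_\pi\Mint(M^{*}W)\Mintd(M^{*}\alt{W}), \qquad W\in\spclW,\ \alt{W}\in\Ind(\WhitMLd(\pi)).
\]

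Finally, non-vanishing of the linear form $W\mapsto\Mint(M^{*}W)$ on $\spclW$ comes for free: Lemma \ref{lem: Bnonvanish} supplies $W_0\in\spclW$ and $\alt{W}_0\in\Ind(\WhitMLd(\pi))$ with $\Bil(W_0,M(\tfrac12)\alt{W}_0,\tfrac12)\ne 0$, and the displayed identity then forces both $\Mint(M^{*}W_0)\ne 0$ and $\Mintd(M^{*}\alt{W}_0)\ne 0$ (and incidentally $c_\pi\ne 0$). I do not foresee a serious obstacle here: all of the genuine analytic work --- absolute convergence and support control for the Bruhat unfolding in Proposition \ref{P: Bs}, stability of the Bessel integral in Lemma \ref{L: stable}, and the descent non-vanishing in Lemma \ref{lem: Bnonvanish} via Appendix \ref{sec: nonvanishing} --- has already been dispatched, so the only real content of the corollary is the bookkeeping above.
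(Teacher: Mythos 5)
Your argument is correct and is essentially the paper's own proof: the corollary is stated there as an immediate consequence of Lemma \ref{L: Mint}, Proposition \ref{P: Bs} and Lemma \ref{lem: Bnonvanish}, and your bookkeeping (specializing \eqref{eq: main1}, rewriting both sides, and removing $\Phi,\d{\Phi}$ by taking them to be normalized characteristic functions of small subgroups so that $\Phi*_{\frac12}W=W$ and $\d{\Phi}*_{\frac12}\alt{W}=\alt{W}$, which works since $W_{\frac12}$ is smooth and $\Phi*_{\frac12}$ preserves $\spclW$) is exactly the intended reading, as is the deduction of the non-vanishing of $\Mint(M^*W)$ from Lemma \ref{lem: Bnonvanish}.
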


In other words, taking into account the reduction step of \S\ref{sec: firstred} we have reduced Theorem \ref{thm: main} to the following statement.

\begin{proposition}\label{prop: main2}
Assume $\pi\in\Irr_{\meta,\temp}\Levi$ is \good\ and $\tilde\pi=\desinv(\pi)$ is tempered.
Then for any $W \in\spclW$ and $\alt{W}\in\Ind(\WhitMLd(\pi))$ we have
\begin{equation}\label{eq: main2}
\Bil(W ,M(\frac12)\alt{W},\frac12)=\eps_\pi\Mint(M^*W)\Mintd(M^*\alt{W})
\end{equation}
\end{proposition}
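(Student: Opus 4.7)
The plan is to study the analytic family $s \mapsto \Bil(W, M(\pi,s)\alt{W}, s)$ on $\Ind(\WhitML(\pi),s) \times \Ind(\WhitMLd(\d\pi), -s)$, which is holomorphic in a neighborhood of $s=\tfrac12$ (using the holomorphy of $M(\pi,s)$ there, by \cite[Proposition 4.1]{1401.0198}, and the entirety of $\oldhalf(W,s;t)$ in $s$ together with the uniform compactness in $t$ from Lemma~\ref{L: Iexp2}). Specializing to $s=\tfrac12$ recovers the left-hand side of \eqref{eq: main2}. The point is that although \eqref{eq: defbil} and the argument outlined for $n=1$ in \cite[\S7]{1401.0198} are only formally valid at $s=\tfrac12$, the manipulations via the functional equations of \cite{LMao4} can be carried out rigorously in the region $\Re s \ll 0$ and then continued analytically back to $s=\tfrac12$.

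First I would rewrite $\Bil(W, M(\pi,s)\alt{W}, s)$ using Lemma~\ref{L: Iexp} in the form where each factor $\oldhalf$ is unfolded as a double integral over $N'_{\Levi'} \times \bar U$; this is valid for $\Re s \gg 0$ on the first factor and $\Re s \ll 0$ on the second via $M(\pi,s)$. Applying the functional equations from \cite{LMao4} allows one to trade $\oldhalfd(M(\pi,s)\alt{W}, -s; t)$ for an integral of $\newhalfd(\alt{W}, s; t)$, and similarly to swap orders of integration, producing the identity
\[
\Bil(W, M(\pi,s)\alt{W}, s) = \int \newhalf(M(\pi,s)W, -s; t)\,\newhalfd(\alt{W}, s; t)\,\frac{dt}{|\det t|}
\]
for $\Re s$ sufficiently negative. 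The delicate point here is that the intermediate expressions only converge as iterated integrals, so to interchange the orders one must impose a further restriction on $\alt{W}$ that forces the integrand on the right to have support compact in $t$ uniformly in $s$; this restriction should still leave enough test functions for the pairing to be non-degenerate in view of Corollary~\ref{cor: Bnonvanish}.

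Next, appealing to the representation-theoretic results of \cite{1403.6787}, one obtains that a suitable averaging of $\newhalf(W, s; t)$ over a unipotent subgroup is entire in $s$, so the above identity extends holomorphically to all $s$ where $M(\pi, s)$ is holomorphic, in particular to $s = \tfrac12$. At this value, two key facts enter: Proposition~\ref{prop: tindep} asserts that $\newhalf(M(\pi,\tfrac12)W, -\tfrac12; t)$ (after the averaging) is a constant function of $t$ whose value is computable in terms of $\Mint(M^*W)$; and Proposition~\ref{prop: Mfactor} asserts that the remaining $t$-integral of $\newhalfd(\alt{W}, \tfrac12; t)$ factors through $\alt{W} \mapsto M(\pi,\tfrac12)\alt{W}$, yielding an expression proportional to $\Mintd(M^*\alt{W})$.

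Combining these, $\Bil(W, M(\tfrac12)\alt{W}, \tfrac12)$ becomes a constant multiple of $\Mint(M^*W)\Mintd(M^*\alt{W})$, and the constant is to be identified with $\eps_\pi$. The identification of the constant uses \eqref{eq:shalmetagen} (relating $\eps_\pi$ to $\epsilon(\tfrac12, \pi, \psi)$) together with bookkeeping of the various $\gamma$-factors and Weil constants appearing in the functional equations. The main obstacle will be the rigorous justification of the manipulation in the strip $\Re s \ll 0$: ensuring that the successive unfoldings, order swaps and applications of the \cite{LMao4} functional equations are valid on a single region where all integrals converge absolutely, and that the averaging procedure introduced to secure entirety of $\newhalf$ is compatible with the form of special $W$ and $\alt{W}$ being used. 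Once this analytic backbone is in place, Propositions~\ref{prop: tindep} and \ref{prop: Mfactor} supply the final evaluation at $s=\tfrac12$.
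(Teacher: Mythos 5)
Your proposal follows essentially the same route as the paper: the identity \eqref{eq: BilE} for $\Re s\ll0$ via the functional equations of \cite{LMao4} (with the extra restriction $\alt{W}\in\spclWd{\pi}$), analytic continuation to $s=\frac12$ after averaging using \cite{1403.6787}, evaluation through Propositions \ref{prop: tindep} and \ref{prop: Mfactor}, and finally the extension to all $\alt{W}$ via the surjectivity of $\alt{W}\mapsto\newhalfd(\alt{W},\frac12;\cdot)$ and Corollary \ref{cor: Bnonvanish} (i.e.\ the goodness of $\pi$). One small correction: the constant $\eps_\pi$ comes directly from the powers $\eps_\pi^{\rkn}$ and $\eps_\pi^{\rkn+1}$ in those two propositions (ultimately \cite[Corollaries~11.9, 11.10]{1403.6787}), not from \eqref{eq:shalmetagen} or any bookkeeping of $\gamma$-factors and Weil constants.
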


The proposition will eventually be proved in \S\ref{sec: define E} after further reductions,
using the results of  \cite{MR3431601} and \cite{MR3220931}.

\section{Applications of functional equations} \label{sec: FEnewBil}

In this section we will use the functional equations established in \cite{MR3220931} to analyze $\Bil(W ,M(s)\alt{W},s)$.

\subsection{}

We first apply a functional equation proved in \cite[Appendix B]{MR3220931}.
To that end we introduce a variant of $\Bil$. We define $\newBil(W,\d{W},s)$ to be the right hand side of
\eqref{eq: explicitBil} whenever the integral defining $\semipair{\cdot}{\cdot}$ and the double integrals over $\kgrpq$ and $U$ are absolutely convergent.

Clearly for $g\in \GLn$, with $\abs{\det g}=1$
\[
\semipair{W(\cdot\toLevi(g))}{\d{W}(\cdot\toLevi(g))}=\semipair{W}{\d{W}}.
\]
Thus, $\newBil(W,\d{W},s)$ is equal to
\begin{multline}\label{eq: otherBil}
\int_{N'_{\GLn}}\int_U\int_U\semipair{W_s\left(\cdot\toLevi(n)w_U v\right)}{\d{W}_{-s}\left(\cdot w_U u\right)}\psi_U(v)^{-1}\psi_U(u)
\psi_{N'_{\GLn}}(n)  \,dv\,du\,dn
\\=\int_{N'_{\GLn}}\int_U\int_U\semipair{W_s\left(\cdot w_U v\right)}{\d{W}_{-s}\left(\cdot\toLevi(n) w_U u\right)}\psi_U(v)^{-1}\psi_U(u)
\psi_{N'_{\GLn}}(n)^{-1}  \,dv\,du\,dn
\\=\int_{\kgrpq}\int_U\semipair{W_s\left(\cdot w_U v\right)}{\d{W}_{-s}\left(\cdot w_U v_2\right)}
\psi_U(v)^{-1}\psi_{\kgrp}(v_2)  \,dv\,dv_2.
\end{multline}

By \eqref{eq: explicitBil}, for any $\pi\in\Irr_{\gen}\Levi$, $W\in\spclW$, $\d{W}\in\Ind(\WhitMLd(\d\pi))$ and $-\Re s\gg1$ we have
\[
\newBil(W,\d{W},s)=\Bil(W ,\d{W},s).
\]
On the other hand, we have the following result.
\begin{proposition}(\cite[Appendix B]{MR3220931}) \label{prop: adjrltn}
Let $\pi\in \Irr_{\temp}\Levi$. Then
\begin{enumerate}
\item For $\Re s\gg1$, $\newBil(W,\d{W},s)$ is well defined for any $W\in \Ind(\WhitML(\pi))$, $\d{W}\in\Ind(\WhitMLd(\d\pi)){\spcl}$.
\item For $-\Re s\gg1$, $\newBil(W,\d{W},s)$ is well defined for any $W\in \Ind(\WhitML(\pi)){\spcl}$, $\d{W}\in\Ind(\WhitMLd(\d\pi))$.
\item For $-\Re s\gg1$ we have
\[
\newBil(W, M(s)\alt{W},s)=\newBil(M(s)W,\alt{W},-s)
\]
for any $W\in \Ind(\WhitML(\pi)){\spcl}$, $\alt{W}\in\Ind(\WhitMLd(\pi)){\spcl}$.
\end{enumerate}
\end{proposition}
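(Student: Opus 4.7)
The plan is to prove the three assertions in order, establishing the convergence in (1) and (2) in parallel using the symmetry $W\leftrightarrow\d{W}$, $s\leftrightarrow -s$, and then deducing the functional equation in (3) by substituting the defining integral for $M(s)$ into both sides and matching after a change of variables.

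For (1), assume $\d{W}\in\Ind(\WhitMLd(\d\pi))\spcl$ has the form $\d{W}(u'mw_Uu)=\modulus_P(m)^{\frac12}W^M(m)\phi(u)$ on the big cell, with $\phi\in\swrz(U)$. Writing $\bar v_2 w_U=w_U u_2$ with $u_2=w_U^{-1}\bar v_2 w_U\in U$, the factor $\d{W}_{-s}(\toLevi(g)\bar v_2 w_U)$ unfolds to $\modulus_P(\toLevi(g))^{\frac12}W^M(\toLevi(g))\phi(u_2)$, so the support of $\phi$ forces $\bar v_2$ into a fixed compact set. The inner integration over $\GLn$ is then a $\GL_n\times\GL_n$ Rankin--Selberg type integral against a single Whittaker function on $\GLnn$, which converges absolutely for $\Re s\gg 1$ by the temperedness of $\pi$ combined with the standard asymptotics for tempered Whittaker functions. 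The outer integrations in $\bar v_1\in\bar U$ and in $n\in N'_{\GLn}$ are controlled by the weight $\wgtt{\cdot}^s$ appearing in $W_s$, which provides the decay required for the intertwining operator to converge when $\Re s\gg 1$; the bounds required are of the same type as in Lemma~\ref{L:conv}. Assertion (2) follows by the symmetric argument, interchanging the roles of $W$ and $\d{W}$ and reflecting $s\mapsto -s$.

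For (3), in the region $-\Re s\gg 1$ the operator $M(s)$ applied to a special section admits the absolutely convergent expression
\[
(M(s)\alt{W})_{-s}(h)=\int_U \alt{W}_s(\levi(\spclt)w_Uuh)\,du.
\]
Substituting this into $\newBil(W,M(s)\alt{W},s)$ introduces an extra $U$-integration alongside the existing $\bar U\times\bar U$ integrations. A change of variables that absorbs the new $U$-integration together with one of the $\bar U$ integrations (using $w_U^{-1}\bar U w_U=U$ and the fact that $\levi(\spclt)=\levi(E)$ preserves the character $\psi_{N_\Levi}$) then rewrites the expression as an integral that matches, term by term, the analogous expansion of $\newBil(M(s)W,\alt{W},-s)$ obtained by the parallel substitution $(M(s)W)_{-s}(h')=\int_U W_s(\levi(\spclt)w_Uuh')\,du$.

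The main obstacle is the rigorous justification of the successive interchanges of integration in both the convergence estimates and the functional-equation manipulation. Since the multiple integrals involved are only conditionally convergent jointly, the proof must proceed by carefully nesting the integrations, invoking at each stage either the compact support furnished by the $\phi$-factor coming from the special-section hypothesis or the absolute convergence supplied by the decay of $W_s$ (resp.\ $\d{W}_{-s}$) in the appropriate half-plane; this is precisely why the hypothesis $W,\alt{W}\in\Ind(\WhitML(\pi))\spcl,\Ind(\WhitMLd(\pi))\spcl$ is needed on \emph{both} slots in (3).
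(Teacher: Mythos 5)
The paper does not actually prove this proposition: it is quoted verbatim from \cite[Appendix B]{LMao4}, so there is no internal argument to compare with, and your proposal must be judged on its own. For parts (1) and (2) your outline is in the right spirit and broadly matches what such a proof needs: the special section forces $\bar v_2$ (resp.\ $\bar v_1$) into a compact set through the $\phi$-factor, the inner integral over $N'_{\GLn}\bs\GLn$ is a linear-period type integral whose absolute convergence for tempered $\pi$ is the content of \cite[Appendix A]{LMao4} (the same input the paper invokes for $\BilM$), and the remaining $\bar U$- and $N'_{\GLn}$-integrations are of the kind controlled by Lemma \ref{L:conv} together with Remark \ref{rem: partint}. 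These steps are asserted rather than carried out, but the strategy is plausible.

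The genuine gap is in (3), which is the only part used later (Corollary \ref{cor: FEBil}). You insert $(M(s)\alt{W})_{-s}(h)=\int_U\alt{W}_s(\levi(\spclt)w_Uuh)\,du$ and claim that a change of variables absorbing the $U$-integration into one of the $\bar U$-integrations makes the two sides match ``term by term''. It does not: after the substitution the intertwining integral sits inside the argument of $\alt{W}_s$ at $\levi(\spclt)w_Uu\,\toLevi(g)\bar v_2w_U$ on the left, whereas on the right it sits inside $W_s$ at $\levi(\spclt)w_Uu\,\toLevi(gn)\bar v_1w_U$; to identify the two expressions you must transport the $U$-integration across the pairing $\int_{N'_{\GLn}\bs\GLn}(\cdots)\,dg$ twisted by $n$, $\psi_{N'_{\GLn}}(n)$ and the characters $\oldUbar(\bar v_1)\oldUbar(\bar v_2)^{-1}$. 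Writing $w_Uu=\bar u'w_U$ with $\bar u'\in\bar U$ merely relocates the problem, and any attempt to decompose products such as $u'\bar v_2$ via Bruhat produces nontrivial Levi components and Jacobians that act on the Whittaker functions and the characters. Crucially, the underlying form $A_{\rkn}$ (equivalently $\BilM$) is \emph{not} $\GLnn$-invariant --- the paper stresses exactly this point in \S\ref{sec: GL_n funcequ} --- so there is no invariance available from which a formal change of variables could move $M(s)$ from one slot to the other; adjointness with respect to $\newBil$ is a genuine functional equation, not an identity of integrands, and this is precisely why it requires the separate (and substantially more involved) proof given in \cite[Appendix B]{LMao4}. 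As written, your argument for (3) would also be needed to justify the interchanges of the new $U$-integral with the conditionally convergent outer integrals, which you acknowledge but do not address; so part (3) remains unproved in your proposal.
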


Combined with the above we get

\begin{corollary} \label{cor: FEBil}
For $-\Re s\gg1$ we have
\[
\Bil(W, M(s)\alt{W},s)=\newBil(M(s)W,\alt{W},-s)
\]
for any $W\in\spclW$, $\alt{W}\in\Ind(\WhitMLd(\pi)){\spcl}$.
\end{corollary}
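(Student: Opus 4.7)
The plan is to obtain Corollary~\ref{cor: FEBil} as an immediate combination of Lemma~\ref{L: defnewBil} and part~(3) of Proposition~\ref{prop: adjrltn}, taking care only to check that the hypotheses of both results are satisfied in the regime $-\Re s \gg 1$ for the given $W$ and $\alt{W}$.

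First I would fix $W\in\spclW$ and $\alt{W}\in\Ind(\WhitMLd(\pi))\spcl$, and take $s$ with $-\Re s$ sufficiently large (large enough for each of the auxiliary results below). Since $M(s)\alt{W}\in\Ind(\WhitMLd(\d\pi))$ (a general element, not necessarily in the $\spcl$ subspace), Lemma~\ref{L: defnewBil} applies with $\d W = M(s)\alt{W}$ and yields
\[
\Bil(W, M(s)\alt{W}, s) = \newBil(W, M(s)\alt{W}, s),
\]
provided $-\Re s \gg 1$. This identifies the left-hand side of the corollary with a value of $\newBil$.

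Next, I would note that $\spclW \subset \Ind(\WhitML(\pi))\spcl$ by the very definition of $\spclW$ in~\S\ref{sec: defspecial}, and $\alt{W}\in\Ind(\WhitMLd(\pi))\spcl$ by assumption. Hence Proposition~\ref{prop: adjrltn}(3) is directly applicable and gives
\[
\newBil(W, M(s)\alt{W}, s) = \newBil(M(s)W, \alt{W}, -s),
\]
again for $-\Re s \gg 1$. Chaining the two identities yields Corollary~\ref{cor: FEBil}.

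There is no substantive obstacle here; the only point that requires a moment's thought is that the two invocations occur at different ``phases'': Lemma~\ref{L: defnewBil} rewrites $\Bil$ as $\newBil$ using only that $W\in\spclW$ (the second argument being completely arbitrary in $\Ind(\WhitMLd(\d\pi))$), while Proposition~\ref{prop: adjrltn}(3) is the genuine functional equation for $\newBil$ requiring both entries to lie in the $\spcl$ subspaces. One needs to confirm that a common half-plane $-\Re s \gg 1$ works for Lemma~\ref{L: defnewBil}, for the absolute convergence of $\newBil(W, M(s)\alt{W}, s)$ in Proposition~\ref{prop: adjrltn}(2), and for the validity of~(3); this is immediate because each statement only asks for $-\Re s$ sufficiently large and the intersection of finitely many such half-planes is non-empty.
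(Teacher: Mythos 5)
Your proposal is correct and coincides with the paper's argument: the paper deduces Corollary~\ref{cor: FEBil} immediately by applying Lemma~\ref{L: defnewBil} (with $\d{W}=M(s)\alt{W}$) and then the functional equation of Proposition~\ref{prop: adjrltn}(3), exactly as you do, with the half-planes of validity intersected. The only hypothesis worth noting explicitly is the standing temperedness assumption on $\pi$ needed for Proposition~\ref{prop: adjrltn}, which is in force in this section.
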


\subsection{An identity of Whittaker functions on $\GL_{2{\rkn}}$} \label{sec: GL_n funcequ}

A key fact in the formal argument for the case ${\rkn}=1$ in \cite[\S7]{1401.0198}
was that for any unitarizable $\pi\in\Irr_{\gen}\GL_2$, the expression
\[
\int_{F^*} W(\sm{t}{}{}{1})\d{W}(\sm{t}{}{}{1})\ dt
\]
defines $\GL_2$-invariant bilinear form on $\WhitM(\pi)\times\WhitMd(\d\pi)$.

In the general case we encountered (in the definition of $\semipair{\cdot}{\cdot}$) a similar integral
\[
A_{\rkn}(W,\d{W})=\int_{N'_{\GLn}\bs \GLn} W\left(\toM(g)\right)\d{W}\left(\toM(g)\right) \abs{\det g}^{1-{\rkn}}\,dg
\]
where $W\in\WhitM(\pi)$ and $\d{W}\in\WhitMd(\d\pi)$.

Let $\zerocol$ be the subspace of $\Mat_\rkn$ consisting of the matrices whose first column is zero.
Note that for any $g\in\GLn,X\in\Mat_\rkn,Y\in \zerocol,n_1,n_2\in N'_{\GLn}$ we have (with $\toU_{\GLnn}$, $\toM$ and $\toMd$ defined in \S\ref{sec: elements})
\begin{multline} \label{eq: Arknequ}
A_{\rkn}(\pi(\toM(g)\toU_{\GLnn}(X+Y)\toMd(n_1))W,\d{\pi}(\toM(g)\toU_{\GLnn}(X)\toMd(n_2))\d{W})=\\
\psi_{N'_{\GLn}}(n_1n_2^{-1}) \abs{\det g}^{\rkn-1}A_{\rkn}(W,\d{W}).
\end{multline}
We also have the following relation.
Let $w_{2{\rkn},{\rkn}}:=\sm{}{I_{\rkn}}{I_{\rkn}}{}$.

\begin{theorem}(\cite[Theorem~1.3]{MR3220931})  \label{T: fe}
Let $\pi\in\Irr_{\temp}\GLnn$.
Then for any $W\in\WhitM(\pi)$, $\d{W} \in \WhitMd(\d\pi)$ we have
\begin{multline} \label{eq: FE}
A_{\rkn}(W,\d{W})
\\=\int_{\zerocol}\int_{\zerocol}\int_{N'_{\GLn}\bs \GLn}
W \left(w_{2{\rkn},{\rkn}}\toM(g)\toU_{\GLnn}(X)\right)
\d{W} \left(w_{2{\rkn},{\rkn}}\toM(g)\toU_{\GLnn}(Y)\right)
\abs{\det g}^{{\rkn}-1}\,dg\,d X\,d Y.
\end{multline}
The integrals on both sides are absolutely convergent.
\end{theorem}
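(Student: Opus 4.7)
The plan is to derive this identity as a specialization at $s=0$ of a meromorphic functional equation for a one-parameter family of bilinear zeta integrals. The strategy is to deform both sides analytically, establish their meromorphic continuations, prove a functional equation away from the critical point, and then specialize.

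First, introduce a complex parameter $s\in\C$ and define
\[
A(s):=\int_{N'_{\GLn}\bs\GLn}W(\toM(g))\d{W}(\toM(g))\abs{\det g}^{s+1-\rkn}\,dg,
\]
together with the analogous deformation $B(s)$ of the right-hand side (with $\abs{\det g}^{\rkn-1-s}$ in place of $\abs{\det g}^{\rkn-1}$). Using the standard gauge bounds on Whittaker functions of $\GLnn$ on the positive chamber of $T_{\GLnn}$, $A(s)$ converges absolutely for $\Re s\gg 0$ and $B(s)$ for $\Re s\ll 0$, and both extend meromorphically to all of $\C$.

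The crux is an algebraic identity relating the two embeddings: a direct matrix calculation gives
\[
\toMd(g)\toUbar_{\GLnn}(X)w_{2\rkn,\rkn}=w_{2\rkn,\rkn}\toM(g)\toU_{\GLnn}(X)
\]
for $g\in\GLn$ and $X\in R_\rkn$, since conjugation by $w_{2\rkn,\rkn}$ intertwines $\toM,\toMd$ and swaps $\toU_{\GLnn},\toUbar_{\GLnn}$. After substituting $g\mapsto w_{2\rkn,\rkn}g$ and absorbing the left action of $w_{2\rkn,\rkn}$ into a redefinition of the Whittaker function, $B(s)$ is reshaped into a Jacquet--Piatetski-Shapiro--Shalika bilinear zeta integral in the $\toM$-embedding carrying the extra $R_\rkn\times R_\rkn$ unipotent integrations. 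The passage from $A(s)$ to $B(-s)$ is then precisely the local functional equation for this bilinear $\GLnn\times\GLn$ Rankin--Selberg convolution applied to the pair $(\pi,\d\pi)$; the intervening meromorphic factor is a product of standard $\gamma$-factors which, by direct inspection, equals $1$ at $s=0$. This yields $A(0)=B(0)$.

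The main obstacle is verifying absolute convergence of both $A(0)$ and $B(0)$ for a tempered (but not necessarily square-integrable) $\pi$. The critical point $s=0$ lies exactly at the edge of the natural region of convergence, so the usual JPSS majorizations do not apply directly: one must combine sharp gauge estimates for tempered Whittaker functions on the positive chamber of $T'_{\GLn}$ with the fact that the product $W(\toM(\cdot))\d{W}(\toM(\cdot))$ decays just barely fast enough to be integrable against $\abs{\det g}^{1-\rkn}\,dg$. These estimates must be uniform in a complex neighborhood of $s=0$ in order to legitimize specialization of the functional equation at the boundary. Temperedness is essential here: for a generic but non-tempered $\pi$ the bare integrals would diverge, and the identity would only be meaningful through analytic continuation.
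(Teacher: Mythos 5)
This theorem is not proved in the present paper at all: it is quoted verbatim from \cite[Theorem~1.3]{LMao4} (``On a new functional equation for local integrals''), so there is no internal proof to compare against. Judged on its own terms, your proposal has a genuine gap at its central step. Your algebraic identity $\toMd(g)\toUbar_{\GLnn}(X)w_{2\rkn,\rkn}=w_{2\rkn,\rkn}\toM(g)\toU_{\GLnn}(X)$ is correct, and deforming by a complex parameter $s$ is indeed the right general strategy. But the pivot of your argument --- that ``the passage from $A(s)$ to $B(-s)$ is precisely the local functional equation for this bilinear $\GLnn\times\GLn$ Rankin--Selberg convolution'' --- invokes a functional equation that does not exist as a citable standard result. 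The integral $A_{\rkn}$ pairs two Whittaker functions on $\GL_{2\rkn}$ restricted to a copy of $\GL_{\rkn}$ of large codimension; it is not one of the Jacquet--Piatetski-Shapiro--Shalika zeta integrals, and there is no off-the-shelf $\gamma$-factor relating it to the right-hand side of \eqref{eq: FE}. Establishing exactly that relation, with the specific unipotent integrations over $R_{\rkn}\times R_{\rkn}$, the Weyl element $w_{2\rkn,\rkn}$, and proportionality constant equal to $1$, \emph{is} the content of the theorem; so your argument is circular at the decisive point. Relatedly, the assertion that the ``intervening meromorphic factor \dots equals $1$ at $s=0$ by direct inspection'' is unsupported because the factor is never identified.

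A secondary but real problem: after substituting, you propose ``absorbing the left action of $w_{2\rkn,\rkn}$ into a redefinition of the Whittaker function.'' Left translation by a Weyl representative does not preserve the Whittaker model: the function $g\mapsto W(w_{2\rkn,\rkn}g)$ is equivariant under $w_{2\rkn,\rkn}^{-1}N_{\GLnn}w_{2\rkn,\rkn}$, not under $N_{\GLnn}$, so the resulting expression is not again a Whittaker integral of the same shape. In \cite{LMao4} the identity is instead obtained by direct manipulation (iterated Fourier inversion/root exchanges on unipotent subgroups, with convergence controlled by majorization with the Harish-Chandra function for tempered $\pi$), not by quoting a functional equation with a $\gamma$-factor. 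Your closing discussion of convergence at the critical exponent is the one ingredient that genuinely matches what is needed (it corresponds to \cite[Appendix A]{LMao4}), but it cannot substitute for the missing proof of the identity itself.
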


Note that it is not a priori clear that the right-hand side of \eqref{eq: FE} satisfies \eqref{eq: Arknequ} for $X\in \Mat_{\rkn}-\zerocol$.

We can slightly rephrase Theorem~\ref{T: fe} as follows.
First, we write the right-hand side of \eqref{eq: FE} as
\[
\int_{\zerocol}\int_{\zerocol}\int_{N'_{\GLn}\bs \GLn}
W \left(\toMd(g)w_{2{\rkn},{\rkn}}\toU_{\GLnn}(X)\right)
\d{W} \left(\toMd(g)w_{2{\rkn},{\rkn}}\toU_{\GLnn}(Y)\right)
\abs{\det g}^{{\rkn}-1}\,dg\,d X\,d Y.
\]
We may multiply $w_{2{\rkn},{\rkn}}$ on the left by any element $x\in\toMd(\GLn)$ with $\abs{\det x}=1$.
In particular, we take $w_{2{\rkn},{\rkn}}':=\sm{}{I_{\rkn}}{\wn}{}$ instead of $w_{2{\rkn},{\rkn}}$.
Let $\auxauxone\in\toU_\GLnn(\one_{1,1}+\zerocol)$ where $\one_{1,1}$ is the matrix in $\Mat_\rkn$ with $1$ in the upper left corner and
zero elsewhere.
\label{sec: bepsilon1}
Since
\[
A_{\rkn}(\pi(\auxauxone)W,\d\pi(\auxauxone) \d{W})=A_{\rkn}( W,  \d{W}),
\]
we infer that $A_{\rkn}(W,\d{W})$ equals
\[
\int_{\zerocol}\int_{\zerocol}\int_{N'_{\GLn}\bs \GLn}
W\left(\toMd(g)w_{2{\rkn},{\rkn}}'\toU_{\GLnn}(X)\auxauxone\right)
\d{W}  \left(\toMd(g)w_{2{\rkn},{\rkn}}'\toU_{\GLnn}(Y)\auxauxone\right)
\abs{\det g}^{{\rkn}-1}\,dg\,dX\,dY.
\]
Using Bruhat decomposition for $g$, the integral can be rewritten as
\begin{multline*}
\int_{\zerocol}\int_{\zerocol}\int_{N'_{\GLn}}\int_{T'_{\GLn}}
W\left(\toMd(t)w_{2{\rkn},{\rkn}}'\toM(n)\toU_{\GLnn}(X)\auxauxone\right)
\\ \d{W}  \left(\toMd(t)w_{2{\rkn},{\rkn}}'\toM(n)\toU_{\GLnn}(Y)\auxauxone\right)
\abs{\det t}^{{\rkn}-1}\modulus_{B'_{\GLn}}(t)^{-1} \,dt\,dn\,dX\,dY.
\end{multline*}
Since $\toU_\GLnn(\one_{1,1}+\zerocol)$ is invariant under conjugation by $\toM(N'_{\GLn})$,
by changing variables in $X$ and $Y$ we get
\begin{multline} \label{eq: reformfe}
A_{\rkn}(W,\d{W})=\int_{\zerocol}\int_{\zerocol}\int_{N'_{\GLn}}\int_{T'_{\GLn}}
W\left(\toMd(t)\auxone\toU_{\GLnn}(X)\toM(n)\right)\\ \d{W}  \left(\toMd(t)\auxone\toU_{\GLnn}(Y)\toM(n)\right)
\abs{\det t}^{{\rkn}-1}\modulus_{B'_{\GLn}}(t)^{-1}  \,dt\,dn\,dX\,dY
\end{multline}
where $\auxone=w_{2{\rkn},{\rkn}}'\auxauxone$.

\subsection{}\label{sec: fe}
In the first expression of \eqref{eq: otherBil} for $\newBil$, we have an inner integral of the form
$$\int_{N'_{\GLn}}\semipair{W_s\left(\cdot\toLevi(n)\right)}{\d{W}_{-s}}
\psi_{N'_{\GLn}}(n)  \,dn.$$
This leads us to apply the above functional equation in the following setting.
Let $\pi\in\Irr_{\temp}\GLnn$.
Define the bilinear form $\BilM(W,\d{W})$ on $\WhitM(\pi)\times\WhitMd(\d\pi)$ by
\begin{equation}\label{eq: defCW}
\BilM(W,\d{W}):=\int_{N'_{\GLn}}A_{\rkn}(\pi(\toM(n))W,\d{W})\psi_{N'_{\GLn}}(n)\,dn.
\end{equation}
It is shown in \cite[Appendix A]{MR3220931} that the integral is absolutely convergent.
\begin{remark}
For $(W,\d{W})\in \Ind(\WhitML(\pi))\times\Ind(\WhitMLd(\d\pi))$ we have
\begin{multline}\label{eq: relateBilM}
\BilM(\modulus_P^{-\frac12}W\circ\levi,\modulus_P^{-\frac12}\d{W}\circ\levi)=\BilM(\modulus_P^{-\frac12}W_s\circ\levi,
\modulus_P^{-\frac12}\d{W}_{-s}\circ\levi) \\
=\int_{N'_{\GLn}}\semipair{W_s\left(\cdot\toLevi(n)\right)}{\d{W}_{-s}}\psi_{N'_{\GLn}}(n)  \,dn.
\end{multline}
\end{remark}

For convenience, set
\begin{multline*}
\vrbar=\auxone(\toM(N'_{\GLn})\ltimes\toU_{\GLnn}(\zerocol))\auxone^{-1}=
w_{2\rkn,\rkn}'(\toM(N'_{\GLn})\ltimes\toU_{\GLnn}(\zerocol))w_{2\rkn,\rkn}'^{-1}
\\=\{\sm{I_{\rkn}}{}{x}{n^t}:x\in \zerocol, n\in N'_{\GLn}\}\subset N_{\GLnn}^t\cap\mira^*.
\end{multline*}

\begin{definition} \label{def: spclWM2}
Let $\WhitMd(\pi)_{\natural}$ be the linear subspace of $\WhitMd(\pi)$ consisting of $W$ such that
\[
W(\cdot \auxone)\rest_{\mira^*}\in \swrz(N_\GLnn\bs\mira^*, \psi_{N_\GLnn}^{-1}) \text{ and }
W(\cdot \auxone)\rest_{\toMd(T'_{\GLn})\ltimes \vrbar}\in \swrz(\toMd(T'_{\GLn})\ltimes \vrbar).
\]
\end{definition}
It is easy to see that this definition is independent of the choice of $\auxauxone$ (and correspondingly $\auxone$).

We note that $\WhitMd(\pi)_{\natural}$ is nonzero, since the restriction of $\WhitMd(\pi)$ to $\mira^*$ contains $\swrz(N_\GLnn\bs\mira^*, \psi_{N_\GLnn}^{-1})$
and $\toMd(T'_{\GLn})\ltimes \vrbar\subset B_{\GLnn}^t\cap\mira^*$.
On the other hand, even if we assume that $\pi$ is supercuspidal, $\WhitMd(\pi)_{\natural}$ is a proper subspace of $\WhitMd(\pi)$ since the set
$N_\GLnn\cdot(\toMd(T'_{\GLn})\ltimes \vrbar)$ is not closed.

Let $\vf=(\kgrp_\GLnn)^*$ and let $\psi_{\vf}$ be the character on $\vf$ given by $\psi_{\vf}(m)=\psi_{\kgrp_\GLnn}(m^*)$.
Note that $\vf$ consists of the elements of $N_{\GLnn}$ whose $\rkn+1$-th column vanishes above the diagonal.

\begin{proposition} \label{prop: feuse}
Let $\pi\in\Irr_{\temp}\GLnn$.
Then for any $W\in\WhitM(\pi)$ and $\d{W} \in \spclWdM$,
$\BilM(W,\d{W})$ is equal to the absolutely convergent integral
\begin{multline*}
\int_{\vrq}\int_{\vrq}\int_{T'_{\GLn}}W\left(\toMd(t)\auxone r_1\right)
\d{W} \left(\toMd(t)\auxone r_2\right)
\abs{\det t}^{{\rkn}-1}\modulus_{B'_{\GLn}}(t)^{-1}\psi_{\vf}(r_1^{-1}r_2)\\ \,dt\,dr_2\,dr_1
\end{multline*}
where the integrand is compactly supported in all variables.
\end{proposition}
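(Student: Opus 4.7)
My plan is to derive Proposition~\ref{prop: feuse} by plugging the reformulated functional equation \eqref{eq: reformfe} (with $\underline{\neweps}=\neweps$, which is permitted since $\neweps = w'_{2\rkn,\rkn}\cdot\toU_\GLnn(\wn\one_{\rkn,1})$ with $\toU_\GLnn(\wn\one_{\rkn,1})\in N_\GLnn$) into the definition \eqref{eq: defCW} and then executing a sequence of changes of variable that absorb the translation by $\toM(n_0)$ into the $\vr$-variables.

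The algebraic linchpin is the identity
\[
\neweps\,\toM(n)\,\neweps^{-1}=\toMd(\wn n\wn),\qquad n\in N'_\GLn,
\]
which I verify by direct block computation, using that $\wn n\wn\cdot\one_{\rkn,1}=\one_{\rkn,1}$ (only the entry $(n)_{1,1}=1$ survives). I also note the symmetry $\psi_{N'_\GLn}(\wn n\wn)=\psi_{N'_\GLn}(\wn n^t\wn)=\psi_{N'_\GLn}(n)$, which follows because the map $n\mapsto \wn n\wn$ (or $n\mapsto \wn n^t\wn$) simply reverses the order of the super-diagonal entries. Starting from $\BilM(W,\d{W})=\int_{N'_\GLn}A_\rkn(\pi(\toM(n_0))W,\d{W})\psi_{N'_\GLn}(n_0)\,dn_0$ and applying \eqref{eq: reformfe}, the argument of $W$ becomes $\toMd(t)\underline{\epsilon_2}\sm{I_\rkn}{}{X}{m^t}\neweps\toM(n_0)$. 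Using the above identity together with $\sm{I_\rkn}{}{X}{m^t}\toMd(\wn n_0\wn)=\sm{I_\rkn}{}{X}{m^t\wn n_0\wn}$, this equals $\toMd(t)\underline{\epsilon_2}\sm{I_\rkn}{}{X}{m^t\wn n_0\wn}\neweps$.

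Next I substitute $m\mapsto (\wn n_0^t\wn)^{-1}m'$ (measure-preserving on $N'_\GLn$), which turns the $W$-argument into $\toMd(t)\underline{\epsilon_2}\sm{I_\rkn}{}{X}{(m')^t}\neweps$ and simultaneously sends $m^t\mapsto (m')^t\wn n_0^{-1}\wn$, so that the $\d W$-argument becomes $\toMd(t)\underline{\epsilon_2}\sm{I_\rkn}{}{Y}{(m')^t}\neweps\toM(n_0^{-1})$ after another application of the algebraic identity. The $n_0$-dependence is now confined to the $\d W$-factor. Swapping $n_0\leftrightarrow n_0^{-1}$, reapplying the identity $\neweps\toM(n_0)=\toMd(\wn n_0\wn)\neweps$, and substituting $n_0\mapsto n_2$ via $(n_2)^t=(m')^t\wn n_0\wn$ converts the inner $\d W$-integral into $\int\d W(\toMd(t)\underline{\epsilon_2}\sm{I_\rkn}{}{Y}{(n_2)^t}\neweps)\psi_{N'_\GLn}(m')\psi_{N'_\GLn}(n_2)^{-1}\,dn_2$; the character factor is computed using the symmetry of $\psi_{N'_\GLn}$ recalled above. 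Renaming $m'=n_1$, $X=x_1$, $Y=x_2$ and setting $r_i=\sm{I_\rkn}{}{x_i}{n_i^t}\in\vr$, the collected character equals $\psi_{N'_\GLn}(n_1)\psi_{N'_\GLn}(n_2)^{-1}=\psi_\vr(r_1r_2^{-1})$ (since $r_1r_2^{-1}$ has bottom-right block $(n_2^{-1}n_1)^t$), and the measure factors as $dr_i=dx_i\,dn_i$. This produces the asserted formula.

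For convergence and joint compact support: the condition $\d{W}\in\spclWdM$ forces $\d W(\cdot\,\neweps)\rest_{\toMd(T'_\GLn)\ltimes\vr}$ to be Schwartz, so $\d W(\toMd(t)\underline{\epsilon_2} r_2\neweps)$ is compactly supported in $(t,r_2)$ (commuting $\underline{\epsilon_2}$ past $\toMd(t)$ only contributes a $\psi_{N_\GLnn}$-phase). With $t$ pinned to a compact set, compact support of $W(\toMd(t)\underline{\epsilon_2} r_1\neweps)$ in $r_1$ should then follow by extracting from $\neweps\vr\neweps^{-1}$ an abelian subgroup on which $W$ transforms by a non-degenerate $\psi$-character (via the left Whittaker equivariance of $W$) and invoking the duality Lemma \ref{L: elemC}; Fubini's theorem then legitimizes all reorderings of the iterated integrals appearing in Steps above. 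The main obstacle is precisely this last compact-support step: unlike the $(t,r_2)$-support, which is built into the $\natural$ condition, the compact support in $r_1$ must be extracted from the interaction between the Whittaker equivariance of $W$ and the specific structure of $\neweps$, which in turn requires a careful identification of the relevant abelian subgroups and characters before Lemma \ref{L: elemC} can be applied.
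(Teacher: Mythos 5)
Your algebraic steps reproduce, by a longer chain of substitutions, exactly the manipulation in the paper's proof: the identity $\neweps\toM(n)=\toMd(\wn n\wn)\neweps$ (valid since $\toUbar_\GLnn(\one_{{\rkn},1})$ commutes with $\toMd((N'_{\GLn})^t)$) is precisely how the paper absorbs the $\toM(n)$-translation into the $\vr$-variable after plugging \eqref{eq: reformfe} into \eqref{eq: defCW}, and your bookkeeping of the character $\psi_{\vr}(r_1r_2^{-1})$ and of the measure agrees with the paper's single change of variable $n_1\mapsto n_1n_2^{-1}$. Likewise your treatment of the $(t,r_2)$-support via the $\natural$-condition, including the remark that $\toMd(t)\underline{\epsilon_2}\toMd(t)^{-1}\in N_\GLnn$ contributes only a unimodular phase, is the paper's.

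The genuine gap is exactly the point you flag and then leave open: you do not prove that $r_1\mapsto W\left(\toMd(t)\underline{\epsilon_2}r_1\neweps\right)$ is compactly supported, and without this neither the asserted absolute convergence nor the Fubini rearrangements underlying your chain of substitutions is justified — this support statement is the only non-formal content of the proposition. Your proposed mechanism (extract from $\neweps\vr\neweps^{-1}$ an abelian subgroup on which $W$ transforms by a non-degenerate character and apply Lemma \ref{L: elemC}) is not carried out, and it is far from clear it can be as stated: $\vr$ is a non-abelian unipotent group, and you exhibit neither the dual group $D$ nor the equivariance $f(cd)=f(c)\chi(d)\psi(\sprod{c}{d}')$ that Lemma \ref{L: elemC} requires. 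The paper's argument is different and elementary: after the phase reduction it suffices to show that the restriction of \emph{any} Whittaker function to $\vr$ is compactly supported; writing the Iwasawa decomposition $r^*=uak$ with $a=\diag(a_1,\dots,a_{2\rkn})$, the last row of $r^*$ is $(0,\dots,0,1)$, so $a_{2\rkn}=1$; since $g\mapsto W(g^*)$ is again a Whittaker function, on its support all $\abs{a_i}$ are bounded above; and the entries of the lower unitriangular matrix $r^*$ are bounded by products of the $\abs{a_i}$, hence $r$ stays in a compact set. Supplying this (or an equivalent) growth argument is what your proposal is missing; once it is in place, your rearrangements and the paper's are both legitimate and lead to the stated formula.
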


\begin{proof}
From \eqref{eq: reformfe} and \eqref{eq: defCW}, we get that $\BilM(W,\d{W})$ is equal to
\begin{multline*} 
\int_{N'_{\GLn}}\big(\int_{\zerocol}\int_{\zerocol}\int_{N'_{\GLn}}\int_{T'_{\GLn}}
W\left(\toMd(t)\auxone\toU_{\GLnn}(X)\toM(n_2n_1)\right)\d{W}  \left(\toMd(t)\auxone\toU_{\GLnn}(Y)\toM(n_2)\right)\\
\abs{\det t}^{{\rkn}-1}\modulus_{B'_{\GLn}}(t)^{-1}  \,dt\,dn_2\,d X\,d Y\big)\psi_{N'_{\GLn}}(n_1)\ dn_1.
\end{multline*}
By the condition on $\d{W}$, the integrand is compactly supported in $t$, $Y$ and $n_2$.
Since $\toMd(t)$ normalizes $\vrbar$, the integrand is also compactly supported in $X$, $n_1$ in view of Lemma \ref{lem: cslu} below.
A change of variable $n_1\mapsto n_2^{-1}n_1$ gives the identity in the proposition.
\end{proof}

\begin{lemma} \label{lem: cslu}
The restriction to $N_\GLnn^t\cap(\mira\cup\mira^*)$ of any $W\in C^{\smth}(N_\GLnn\bs\GLnn,\psi_{N_\GLnn})$ is compactly supported.
\end{lemma}

\begin{proof}
This is standard. By passing to $W(\cdot^*)$, it is enough to consider the support in $N_\GLnn^t\cap\mira$.
Let $n=uak$ with $a=\diag(a_1,\ldots,a_{2{\rkn}})$ be the Iwasawa decomposition of $n\in N_\GLnn^t\cap\mira$.
Then $a_{2\rkn}=1$ and $\abs{n_{i,j}}\le\abs{a_i\dots a_{2\rkn}}$ for all $1\le j\le i\le 2\rkn$.
Thus, if $W(n)\ne0$ then $a_i$ are bounded in terms of $W$ and hence $n$ is bounded. The lemma follows.
\end{proof}

\subsection{}

We will apply Proposition~\ref{prop: feuse} to get a new expression for $\newBil(W, \d W,s)$. Before stating the result, we first introduce an integral
that will appear in the new expression.

As in \cite[\S7.4]{MR3431601}, define
$$\factor(t):=\abs{t_1}^{-\rkn}\modulus_B^{\frac12}(\levi(t)),\ \ \ t=\diag(t_1,\ldots,t_{2{\rkn}})\in T_\GLnn.$$
In particular when $t=\toMd(t')$ with $t'\in T'_{\GLn}$, we have $\factor(t)=\modulus_{B'}(\levi'(t))^{\frac12}$.

For now, let $\auxtwo$ be an arbitrary element in $N_\GLnn$. (We will fix it in the next section in order to make the formulas look nicer.)
Let $\soment=\toMd(T'_{\GLn})\times Z_\GLnn$.
For $W\in C^{\smth}(N\bs G,\psi_N)$, $t\in \soment$ let
\begin{multline}\label{eq: defoldhalf}
\newhalf(W,t):=\factor(t)^{-1}\int_{\bigvr}W\left(\levi(t\auxtwo\auxone)
w_Uv\right)\psi_{\kgrp}(v)^{-1}\,dv\\=
\factor(t)^{-1}\psi_{N_\GLnn}(t\auxtwo t^{-1})\int_{\bigvr}W\left(\levi(t\auxone)
w_Uv\right)\psi_{\kgrp}(v)^{-1}\,dv\\
=\factor(t)^{-1}\psi_{N_\GLnn}(t\auxtwo t^{-1})\int_U\int_{\vrbar}W(\levi(tv\auxone  )w_U u)
\psi_{\vf}(\auxone^{-1}v\auxone)^{-1}\psi_U(u)^{-1}\ dv\ du
\end{multline}
provided that the integral converges. (It is easy to check that the integrand above is invariant under $\toLevi(N'_{\GLn})$ when $t\in \soment$.)
Similarly define $\newhalfd(\alt{W},t)$ for $\alt W\in C^{\smth}(N\bs G,\psi_N^{-1})$.

Note that for any $t\in\soment$, $\newhalf(W,t)$ is $(\kgrp,\psi_{\kgrp})$-equivariant in $W$.
It is also clear from the definition that (whenever defined)
\begin{equation}\label{eq: invcenter}
\newhalf(W_s,tz)=\newhalf(W_s,t)\abs{\det z}^{s+\frac12}\omega_\pi(z),\ \ \ z\in Z_\GLnn,t\in \soment
\end{equation}
for $W\in\Ind(\WhitML(\pi))$ with $\pi\in \Irr_{\gen}\GLnn$ where $\omega_\pi$ is the central character of $\pi$.

\begin{definition} \label{def: spclW2}
Let $\spclWd{\pi}$ be the linear subspace of $\Ind(\WhitMLd(\pi)){\spcl}$ spanned by the functions which vanish outside
$Pw_UN$ and on the big cell are given by
\[
W(u'm w_U u)=\modulus_P^{\frac12}(m)W^M(m)\phi(u),\,\,m\in \Levi,\,u,u'\in U
\]
with $\phi\in\swrz(U)$ and $W^M\circ\levi\in \WhitMd(\pi)_{\natural}$.
\end{definition}
This space is clearly nonzero since $\WhitMd(\pi)_{\natural}$ is nonzero.

\begin{lemma} \label{lem: convE}
Let $\pi\in\Irr_{\gen}\Levi$. For $\Re s\gg1$ the integral \eqref{eq: defoldhalf} defining $\newhalf(W_s,t)$ converges for any
$W\in\Ind(\WhitML(\pi))$ and $t\in \soment$ uniformly for $(s,t)$ in a compact set.
Hence, $\newhalf(W_s,t)$ is holomorphic for $\Re s\gg1$ and continuous in $t$.
If $\alt{W}\in\Ind(\WhitMLd(\pi))\spcl$ then $\newhalfd(\alt{W}_s,t)$ is entire in $s$ and locally constant in $t$, uniformly in $s$.
If moreover $\alt{W}\in\spclWd{\pi}$ then $\newhalfd(\alt{W}_s,t)$ is compactly supported in $t\in\toMd(T'_{\GLn})$, uniformly in $s$.
\end{lemma}

\begin{proof}
For the first assertion, since $T_{\GLnn}$ normalizes the group $\vrbar$ and
$U$ is normalized by $\Levi$, we need to check the convergence, locally uniformly in $m\in\Levi$, of
\[
\int_{\vrbar}\int_U \abs{W_s\left(\levi(v)w_Uum\right)}\,du\,dv.
\]
This expression is clearly locally constant in $m$, so we can  assume $m=e$.
Since $\vrbar\subset N_\GLnn^t\cap\mira^*$, the integral above is a `partial integration' of the double integral \eqref{conv: column} in Lemma \ref{L:conv}
(with $g=e$), thus converges by Remark \ref{rem: partint}.

By Lemma \ref{lem: cslu}, when $\alt{W}\in\Ind(\WhitMLd(\pi))\spcl$,
the integrand in the last expression of \eqref{eq: defoldhalf} (with $\alt{W}_s$ in place of $ W$) is
compactly supported in $v$ and $u$, uniformly in $s$  and locally uniformly in $t$.
It is then clear that $\newhalfd(\alt{W}_s,t)$ is locally constant in $t$.
The second part follows. Moreover, if $\alt{W}\in\spclWd{\pi}$, it is clear from the definition of the latter space that
the integrand in \eqref{eq: defoldhalf} (with $\alt{W}$ in place of $ W$) is compactly supported in $v$ and $t\in \toMd(T'_{\GLn})$.
The lemma follows.
\end{proof}

\subsection{}

\begin{proposition} \label{prop: C by E}
Let $\pi\in\Irr_{\temp}\Levi$. Then for $\Re s\gg1$ we have
\begin{equation} \label{eq: BilE2}
\newBil(W,\d{W},s)=\int_{\toMd(T'_{\GLn})}\newhalf(W_s,t)\newhalfd(\d{W}_{-s},t)\,\frac{dt}{\abs{\det t}}
\end{equation}
for any $W\in\Ind(\WhitML(\pi))$ and $\d{W}\in \spclWd{\d\pi}$, where (by Lemma \ref{lem: convE})
the integrand on the right-hand side is continuous and compactly supported.
\end{proposition}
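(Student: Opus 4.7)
The strategy is to express $\newBil(W,\d{W},s)$ as an integral over $\bar U\times \bar U$ of the bilinear form $\BilM$ applied to auxiliary Whittaker functions on $\GLnn$, then apply Proposition \ref{prop: feuse} and exchange orders of integration to separate the variables.

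For $\bar v_1,\bar v_2\in\bar U$, define
\[
\tilde W^1_{\bar v_1}(m):=\wgtt{\bar v_1 w_U}^s\modulus_P(\levi(m))^{-1/2}W(\levi(m)\bar v_1 w_U),
\]
\[
\tilde W^2_{\bar v_2}(m):=\wgtt{\bar v_2 w_U}^{-s}\modulus_P(\levi(m))^{-1/2}\d{W}(\levi(m)\bar v_2 w_U),
\]
using $\wgtt{\levi(m)g_0}=\abs{\det m}\wgtt{g_0}$ to extract the $s$-dependence from $W_s$ and $\d{W}_{-s}$. Then $\tilde W^1_{\bar v_1}\in\WhitM(\pi)$, and by the assumption $\d{W}\in\spclWd{\d\pi}$ together with the structural form of such sections, $\tilde W^2_{\bar v_2}=\wgtt{\bar v_2 w_U}^{-s}\phi(w_U^{-1}\bar v_2 w_U)W^M(\levi(\cdot))$ with $\phi\in\swrz(U)$ and $W^M\circ\levi\in\spclWdM$. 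Using $\modulus_P(\toLevi(g))=\abs{\det g}^{2\rkn+1}$ and the fact that the $\abs{\det}^{\pm s}$ twist factors cancel inside $\BilM$, one obtains for $\Re s\gg 1$ (with absolute convergence by Proposition \ref{prop: adjrltn}(1))
\[
\newBil(W,\d{W},s)=\int_{\bar U}\int_{\bar U}\oldUbar(\bar v_1)\oldUbar(\bar v_2)^{-1}\BilM(\tilde W^1_{\bar v_1},\tilde W^2_{\bar v_2})\,d\bar v_1\,d\bar v_2.
\]

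Proposition \ref{prop: feuse}, applied to the tempered $\pi$ with $\underline{\epsilon_2}=\epsilon_2$, expresses each $\BilM(\tilde W^1_{\bar v_1},\tilde W^2_{\bar v_2})$ as an absolutely convergent triple integral over $\vr\times\vr\times T'_{\GLn}$ whose integrand is compactly supported in $(t,r_1,r_2)$. Substituting, and using $\psi_\vr(r_1 r_2^{-1})=\psi_\vr(r_1)\psi_\vr(r_2)^{-1}$, the variables $(\bar v_1,r_1)$ and $(\bar v_2,r_2)$ separate after exchanging orders of integration. A direct computation using $\abs{\det(\toMd(t)\epsilon_2 r\neweps)}=\abs{\det t}$ and the defining formula \eqref{eq: defoldhalf} identifies the two resulting inner double integrals as
\[
\abs{\det t}^{-(2\rkn+1)/2}\factor(\toMd(t))\newhalf(W,s;\toMd(t))\quad\text{and}\quad\abs{\det t}^{-(2\rkn+1)/2}\factor(\toMd(t))\newhalfd(\d{W},-s;\toMd(t)).
\]
The remaining measure factor $\abs{\det t}^{\rkn-1-(2\rkn+1)}\modulus_{B'_{\GLn}}(t)^{-1}\modulus_{B'}(\levi'(t))$ collapses to $\abs{\det t}^{-1}$ via the root-theoretic identity $\modulus_{B'}(\levi'(t))=\abs{\det t}^{\rkn+1}\modulus_{B'_{\GLn}}(t)$ (both sides equal $\prod_i\abs{t_i}^{2(\rkn+1-i)}$ for $t=\diag(t_1,\ldots,t_\rkn)$), yielding the desired identity \eqref{eq: BilE2}.

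The main technical obstacle is the justification of Fubini across the combined $5$-fold integral on $\bar U\times\bar U\times\vr\times\vr\times T'_{\GLn}$. For each $(\bar v_1,\bar v_2)$, the integrand is compactly supported in $(t,r_1,r_2)$ with support determined by $\d{W}$; the $\bar v_2$-dependence decays rapidly through the Schwartz factor $\phi(w_U^{-1}\bar v_2 w_U)$, while the $\bar v_1$-integration is controlled for $\Re s\gg 1$ by relating $\int_{\bar U}W_s(\levi(m)\bar v_1 w_U)\,d\bar v_1$ (after conjugating $\bar v_1$ by $\levi(m)$) to the convergent intertwining integral that defines $M(s)W$. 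Combined with the absolute convergence of $\newBil$ from Proposition \ref{prop: adjrltn}(1), this validates the interchange of integrals.
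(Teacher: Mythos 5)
Your overall route is the same as the paper's: write $\newBil(W,\d{W},s)$ for $\Re s\gg1$ as a double integral over $\bar U\times\bar U$ of $\BilM$ applied to the (suitably normalized) Whittaker functions $m\mapsto\modulus_P^{-\frac12}W_s(\levi(m)\bar v_1w_U)$ and $m\mapsto\modulus_P^{-\frac12}\d{W}_{-s}(\levi(m)\bar v_2w_U)$, apply Proposition \ref{prop: feuse} with $\underline{\epsilon_2}=\epsilon_2$, regroup the variables, and collapse the measure factor; your bookkeeping, including the identity $\modulus_{B'}(\levi'(t))=\abs{\det t}^{\rkn+1}\modulus_{B'_{\GLn}}(t)$ (equivalently the paper's $\modulus_{B'_{\GLn}}(t)^{-1}\modulus_P(\toLevid(t))^{-1}\abs{\det t}^{\rkn-1}=\modulus_{B'}(\levi'(t))^{-1}\abs{\det t}^{-1}$), is correct.

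The gap is in the Fubini step, which is in fact the only analytic content of this proposition. After Proposition \ref{prop: feuse} you must justify integrating the resulting expression absolutely over all of $\bar U\times\bar U\times\vr\times\vr\times T'_{\GLn}$, and the facts you invoke do not add up to this. (i) Compact support of the integrand in $(t,r_1,r_2)$ holds only for \emph{fixed} $(\bar v_1,\bar v_2)$, and the support in $r_1$ depends on $\bar v_1$ (it is controlled by the level of the Whittaker function $W(\cdot\,\bar v_1w_U)$, which deteriorates as $\bar v_1$ grows), so it cannot be combined with (ii) the convergence of the $\bar v_1$-integral for \emph{fixed} $r_1$, which is all that the comparison with the intertwining integral $M(s)W$ gives. (iii) The absolute convergence of $\newBil$ from Proposition \ref{prop: adjrltn}(1) is convergence of a particular iterated integral and does not transfer to the regrouped integral obtained after substituting the triple-integral expression of Proposition \ref{prop: feuse}. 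What is actually needed is the \emph{joint} convergence
\[
\int_{\vr}\int_{\bar U}\abs{W_s\left(\levi(\toMd(t)r_1\neweps)\bar v_1w_U\right)}\,d\bar v_1\,dr_1<\infty
\]
for $\Re s\gg1$, locally uniformly in $t$ (the $\d{W}$-side being harmless by the compact support built into $\spclWd{\d\pi}$). This is precisely Lemma \ref{lem: convE}, whose proof rests on the nontrivial estimate \eqref{conv: column} of Lemma \ref{L:conv} (a double integral over $N_\Levi^\fix\times\bar U$, proved by bounding Iwasawa torus parts and absorbing the growth into the exponent $\Re s$); the paper's proof of the proposition concludes by invoking exactly this. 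Without this ingredient, or an equivalent joint estimate, the interchange of integrals in your argument is not justified.
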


\begin{proof}
First note that the element $\auxtwo$ has no effect on the validity of \eqref{eq: BilE2}, so may ignore it from the consideration.
By Proposition \ref{prop: adjrltn}, if $\d{W}\in\Ind(\WhitMLd(\d\pi)){\spcl}$ the integrals defining $\newBil(W,\d{W},s)$ are absolutely convergent when $\Re s\gg1$.
If moreover $\d{W}\in \spclWd{\d\pi}$ then by the relation \eqref{eq: relateBilM}, (the first expression of) \eqref{eq: otherBil} and Proposition \ref{prop: feuse},
$\newBil(W,\d{W},s)$ is equal to
\begin{multline*}
\int_U\int_U\big(\int_{T'_{\GLn}}\int_{\vrq}\int_{\vrq}W_s\left(\levi(\toMd(t)\auxone r_1)w_Uu_1\right)\d{W}_{-s}\left(\levi(\toMd(t)\auxone r_2)w_Uu_2\right)\\
\modulus_{B'_{\GLn}}(t)^{-1}\modulus_P(\toLevid(t))^{-1}\abs{\det t}^{\rkn-1}\psi_U(u_1)^{-1}\psi_{\vf}(r_1)^{-1}\psi_U(u_2)
\psi_{\vf}(r_2)\,dr_1\,dr_2\,\ dt\big)\,du_1\,du_2.
\end{multline*}
Using the fact that for $t\in T'_{\GLn}$
\[
\modulus_{B'_{\GLn}}(t)^{-1}\modulus_P(\toLevid(t))^{-1}\abs{\det t}^{\rkn-1}=\modulus_{B'}(\levi'(t))^{-1}\abs{\det t}^{-1},
\]
to finish the proof of Proposition \ref{prop: C by E} it suffices to show the convergence of
\begin{multline*}
\int_{T'_{\GLn}}
\int_{\bigvr}\abs{W_s\left(\levi(\toMd(t)\auxone)w_Uv_1\right)}\,dv_1
\int_{\bigvr}\abs{\d{W}_{-s}\left(\levi(\toMd(t)\auxone)w_Uv_2\right)}\,dv_2\\\modulus_{B'}(\levi'(t))^{-1}\,\frac{dt}{\abs{\det t}}.
\end{multline*}
It follows from Lemma~\ref{lem: convE} that the integrals over $v_1$ and $v_2$ converge for a fixed $t$
and that the integral over $v_2$ is compactly supported as a function of $t$.
The argument in Lemma~\ref{lem: convE} also shows that the resulting integrals over $v_1$ and $v_2$ are smooth in $t$.
Thus, the integral converges.
\end{proof}

Combining Proposition \ref{prop: C by E} with Corollary \ref{cor: FEBil} and \eqref{eq: invcenter} we get
\begin{proposition}\label{prop: afterfe}
Let $\pi\in\Irr_{\temp}\Levi$. Then for $-\Re s\gg 1$ and any $W\in\spclW$, $\alt{W}\in\spclWd{\pi}$, we have
\begin{equation}\label{eq: BilE}
\Bil(W,M(s)\alt{W},s)=\int_{Z_\GLnn\bs \soment}\newhalf(M^*_sW,t)\newhalfd(\alt{W}_s,t)\,\frac{dt}{\abs{\det t}}
\end{equation}
where the integrand is continuous and compactly supported.
\end{proposition}

\section{Proof of Proposition \ref{prop: main2}} \label{sec: define E}

Proposition \ref{prop: afterfe} is an important (hard-earned) step towards the proof of Proposition \ref{prop: main2}.
However, it is still insufficient. First, it is not valid at the point $s=\frac12$ which is pertinent for the left-hand side of
\eqref{eq: main2}. Second, the functions $\newhalf$ have to be explicated in order to match the right-hand side of \eqref{eq: main2}.
Also, so far we have only used the fact that $\pi\in\Irr_{\meta}\GLnn$ to reduce to special sections, but
it should be clear from the nature of our formula that this fact has to enter the computation
in a more substantial and quantitative way. Fortunately, these issues were taken up in \cite[\S11]{MR3431601}.
We will recall these results and explain how they are used to conclude the proof of Proposition \ref{prop: main2} from Proposition~\ref{prop: afterfe}.

\subsection{}
Let $\ddg=\diag(1,-1,\ldots,(-1)^{\rkn-1})\in\Mat_\rkn$.
We now fix
\[
\auxtwo=\toU_{\GLnn}(-\frac12\ddg\wn)\in N_\GLnn.
\]
This element is denoted by $\epsilon'$ in \cite[p. 9550]{MR3431601} with the parameter $\num=-\frac12$
in the notation of that paper. We also fix $\auxauxone=\toU_\GLnn(\ddg)$ (and correspondingly
$\auxone=w_{2\rkn,\rkn}'\auxauxone$ -- see \S\ref{sec: bepsilon1}).
The reason for fixing the elements this way is (partly) due to the special form of the
character $\psi_{\bar U}$ on $\bar U$ given by
\begin{equation} \label{eq: indepchar}
\psi_{\bar U}(\bar v)=\psi_U((\levi(\auxtwo \auxone)w_U)^{-1}\bar v\levi(\auxtwo\auxone)w_U)^{-1}=
\psi(\bar v_{2{\rkn}+1,1}), \ \ \bar v\in \bar U.
\end{equation}

We rewrite the first expression on the right-hand side of \eqref{eq: defoldhalf} by splitting the integral into integrals over $U$ and
$\toLevi(N'_{\GLn})\bs\kgrp_\Levi$, making the change of variables
\[
v\mapsto (\levi(\auxtwo\auxone)w_U)^{-1}\bar v\levi(\auxtwo\auxone) w_U
\]
in the integral over $U$, and conjugating the variable in the second integral by $w_U$. We get (for $\Re s\gg 1$)
\begin{multline}\label{eq: defnewhalf}
\newhalf(W_s,t)=\factor(t)^{-1}\int_{\vrq}\int_{\bar U}
W_s\left(\levi(t)\bar v \levi( \auxtwo\auxone r) w_U\right)
\psi_{\bar U}(\bar v)\psi_{\vf}(r)^{-1}\,d\bar v\,dr\\=
\factor(t)^{-1}\int_{\vrbar}\int_{\bar U}
W_s\left(\levi(t)\bar v \levi( \auxtwo r\auxone) w_U\right)
\psi_{\bar U}(\bar v)\psi_{\vrbar}(r)\,d\bar v\,dr
\end{multline}
where $\psi_{\vrbar}(r)=\psi_{\vf}(\auxone^{-1}r\auxone)^{-1}$.

We now quote the first pertinent result from \cite[\S11]{MR3431601}.
Let $\spcltrs$ be the subtorus $\prod_{i=1}^{\rkn-1}\Imk_i$ of $\soment$  (of codimension two) where
$\Imk_i$ is the one-dimensional torus
\[
\Imk_i:=\{\diag(\overbrace{z^{-1},\ldots,z^{-1}}^{2\rkn-i},\overbrace{z,\ldots,z}^i):z\in F^*\}.
\]
For any $f\in\swrz(\spcltrs)$ and $g\in C(\soment)$ we write $f*g(\cdot)=\int_{\spcltrs} f(t)g(\cdot t)\ dt$.

\begin{theorem} \label{thm: s11mt}
For any $W\in C^{\smth}(N\bs G,\psi_N)$ which is left-invariant under a compact open subgroup of $Z_{\Levi}$
and any $f\in\swrz(\spcltrs)$, the function $f*\newhalf(W_s,t)$ extends to an entire function in $s$ which is locally constant in $t$, uniformly in $s$.
Moreover, if $\pi\in\Irr_{\meta,\temp}\GLnn$ then
\[
f*\newhalf(M^*_sW,t)\rest_{s=\frac12}=\eps_\pi^{\rkn} \Mint(M^*W)\int_{\spcltrs}f(t')\ dt'
\]
for any $W\in\Ind(\WhitML(\pi))$, $t\in\soment$ and $f\in\swrz(\spcltrs)$.
\end{theorem}

The first part is \cite[Corollary~11.9]{MR3431601} (except for a slight correction which we now explain).
Recall some notation from [ibid.].
As in \cite[\S6]{MR3431601} let $\zigzag$ be the unipotent subgroup of $\GLnn$ given by
\begin{align*}
\zigzag=\{m\in\GLnn:m_{i,i}=1\ \forall i,m_{i,j}=0\text{ if }&\text{either ($j>i$ and $i+j>2\rkn$)}\\
&\text{or ($i>j$ and $i+j\le 2\rkn+1$)}\}
\end{align*}
and let $\psi_{\zigzag}$ be its character
\begin{equation}\label{eq: defpsie}
\psi_{\zigzag}(m)=\psi(m_{1,2}+\ldots+m_{\rkn-1,\rkn}-m_{\rkn+2,\rkn+1}-\ldots-m_{2\rkn,2\rkn-1}),\ \ m\in\zigzag.
\end{equation}
The group $\levi(\zigzag)$ stabilizes the character $\psi_{\bar U}$.
Let $\Etwon=\levi(\zigzag)\ltimes\bar U$ with the character
\[
\psi_{\Etwon}(\levi(m)\bar u)=\psi_{\zigzag}(m)\psi_{\bar U}^{-1}(\bar u),\ \ m\in\zigzag, \bar u\in\bar U.
\]
Also, let $\zigzag^+=\zigzag\cap N_\GLnn$, $\vs=\zigzag\cap N_\GLnn^t$ and
\[
\remr=\{\toU_{\GLnn}(X)^t:X\in\zerocol,X_{i,j}=0\text{ if }i+j>\rkn+1\}.
\]
We have $\zigzag=\zigzag^+\cdot\vs$ and $\vrbar=\vs\cdot\remr$.
In \cite[\S10.4]{MR3431601} it was erroneously claimed that the second product is semi-direct.
Fortunately, this does not have any bearing on the argument. All what matters is that the character
$\psi_{\vrbar}$ is given by $\psi_{\vrbar}(vn)=\psi_{\vs}(v)$, $v\in\vs$, $n\in\remr$ where $\psi_{\vs}(r)$ is defined right after (7.11) in [ibid.].
Thus, the middle integral in [ibid., (11.11)] should be taken over $\vs\cdot\remr$, i.e., over $\vrbar$ in our notation.
(Also, the variable $v$ should be replaced by $r$.)
With this correction, the expression in [ibid., (11.11)], evaluated at $\levi(\auxone)w_U$, is $(f\factor)*\newhalf(W_s,t)$
and its analytic continuation is given by the expression in [ibid., (11.10)] which amounts to a finite sum.

Meanwhile, from the definition of $\Mint$ in \eqref{eq: defMint} we have
\[
\Mint(W)=\int_{\levi(V_{\GLnn}^*)\bs \gamma\Vm \gamma^{-1}} W(v\gamma\auxthree)\psi_{\Vm}((\gamma\auxthree)^{-1}v\gamma\auxthree)^{-1}\, dv.
\]
We can integrate instead over the group $\subUbar$ in \cite[\S11.5]{MR3431601}, consisting of
elements $\bar u$ in $\bar U$ such that $\bar u_{2\rkn+i,j}=0$ whenever $i\geq \rkn$ and $j\leq \rkn$.
The character $\oldsubUbar(\bar u):=\psi_{\Vm}((\gamma\auxthree)^{-1}\bar u\gamma\auxthree)^{-1}$ on $\subUbar$ also matches the one defined in
\cite[\S11.3, 11.5]{MR3431601} (with the parameter $\num=-\frac12$).
Thus, $\Mint(W)$ is ${\mathcal T}'(W)(\gamma\auxthree)$ in the notation of \cite[\S11.5]{MR3431601}.
The second part amounts to the first statement of \cite[Corollary~11.10]{MR3431601} upon taking
\[
\auxthree=(\few\gamma)^{-1}\levi(\auxone)w_U=\toU_\Levi((-1)^\rkn\ddg)
\]
where
\[
\few:=\toG(w'_{U'})\levi(w_{2{\rkn},{\rkn}}')=
\left(\begin{smallmatrix}&I_\rkn&&\\&&&w_\rkn\\-w_\rkn&&&\\&&I_\rkn&\end{smallmatrix}\right)
\]
is as in the bottom of p. 9523 of [ibid.].

Theorem \ref{thm: s11mt} is a crucial step.
Recall that the Langlands quotient of $\Ind(\WhitML(\pi),\frac12)$, i.e., the image under $M^*$,
admits a unique $(\kgrp,\psi_{\kgrp})$-equivariant functional.
Thus, $\newhalf(M^*W,t)$ (which is technically not defined) is a priori
proportional to $\Mint(M^*W)$ and the constant of proportionality is a function depending on $t$.
Theorem \ref{thm: s11mt} essentially says that in fact this function is a constant
which can be computed explictly.
The main input is that the Langlands quotient admits a realization in $C^{\smth}(H\bs G)$ where
$H\simeq\Sp_\rkn\times\Sp_\rkn$ is the centralizer of $\levi(E)$ in $G$.
(Recall that $E=\diag(1,-1,\dots,1,-1)$.) We refer to \cite{MR3431601} for more details.

We conclude

\begin{corollary} \label{cor: tindep}
Suppose that $\pi\in\Irr_{\meta,\temp}\GLnn$. Then for any $W\in\spclW$, $\alt{W}\in\spclWd{\pi}$ we have
\begin{equation}\label{eq: aftertindep}
\Bil(W,M(\frac12)\alt{W},\frac12)=\eps_\pi^{\rkn} \Mint(M^*W)\int_{Z_\GLnn\bs \soment} \newhalfd(\alt{W}_{\frac12},t)\,\frac{dt}{\abs{\det t}}.
\end{equation}
\end{corollary}

\begin{proof}
By Lemma~\ref{lem: convE}, for any $\alt{W}\in\spclWd{\pi}$ there exists $K_0\in \csgr(\soment)$
such that $\newhalfd(\alt{W}_s,\cdot)\in C(\soment)^{K_0}$ for all $s$ and
$\newhalfd(\alt{W}_s,\cdot)$ is compactly supported modulo $\Z_\GLnn$ uniformly in $s$.
Suppose that $f\in\swrz(\spcltrs)$ is supported in $\spcltrs\cap K_0$ and let $\d f(t):=f(t^{-1})$.
By Proposition~\ref{prop: afterfe}, for $-\Re s\gg1$ and $W\in\spclW$ we have
\begin{multline}\label{eq: introavg}
\Bil(W,M(s)\alt{W},s)\int_{\spcltrs}f(t)\ dt
=\int_{Z_\GLnn\bs \soment}\newhalf(M^*_sW,t)\d{f}*\newhalfd(\alt{W}_s,t)\,\frac{dt}{\abs{\det t}}
\\=\int_{Z_\GLnn\bs \soment}f*\newhalf(M^*_sW,t)\newhalfd(\alt{W}_s,t)\,\frac{dt}{\abs{\det t}}.
\end{multline}
From the first part of Theorem \ref{thm: s11mt} (i.e., \cite[Corollary~11.9]{MR3431601})
we infer that both sides of \eqref{eq: introavg} are meromorphic functions in $s$ and the identity holds whenever
$M(s)$ is holomorphic. Specializing to $s=\frac12$ and using the second part of Theorem \ref{thm: s11mt} (i.e., \cite[Corollary~11.10]{MR3431601})
the corollary follows.
\end{proof}

\subsection{}

It remains to compute the integral on the right-hand side of \eqref{eq: aftertindep}.
Again, this is essentially done in \cite{MR3431601} and it relies heavily on the fact that $\pi\in\Irr_{\meta}\GLnn$.

Let
\begin{multline*}
\WhitM(\pi)_{\natural\natural}=\{W\in\WhitM(\pi):
W\rest_{\mira^*}\in \swrz(N_\GLnn\bs\mira^*, \psi_{N_\GLnn})\text{ and }\\
W\rest_{\toMd(T'_{\GLn})\ltimes \zigzag}\in \swrz(\zigzag^+\bs\toMd(T'_{\GLn})\ltimes\zigzag,\psi_{\zigzag})\}.
\end{multline*}
Then $\WhitM(\pi)_{\natural\natural}$ is invariant under $\pi(\auxtwo)$ and
$\pi(u\auxone)W\in\WhitM(\pi)_{\natural\natural}$ for any $W\in\WhitM(\pi)_{\natural}$ and $u\in\remr$.
Note that $\auxtwo$ normalizes $\zigzag$, $\zigzag^+$ and $\psi_{\zigzag}$ (by direct computation).
Similarly $\auxone(w_{2\rkn,\rkn}')^{-1}$ stabilizes $(\zigzag,\psi_{\zigzag})$.

\begin{theorem} \label{thm: auxLM}
Let $\pi\in\Irr_{\meta,\temp}\GLnn$. Then
\begin{enumerate}
\item (see \cite[Proposition 3.2]{MR3431601} and \cite{MR3430877})
The integral
\[
\per{H_{\GLnn}}(W):=\int_{N_\GLnn\cap H_\GLnn\bs\mira\cap H_\GLnn} W(p)\ dp
\]
converges and defines a non-zero $H_{\GLnn}$-invariant functional on $\WhitM(\pi)$.

\item \label{part: lem11.6} (\cite[Proposition 11.1]{MR3431601}) For any $W\in\WhitM(\pi)_{\natural\natural}$
we have
\[
\int_{\zigzag^+\bs\zigzag}\int_{\toMd(T'_{\GLn})}\factor(t)^{-1}\abs{\det t}^{\rkn}W(tr)\psi_{\zigzag}(r)^{-1}\,dt\,dr
=\int_{\zigzag\cap H_\GLnn\bs\zigzag}\per{H_{\GLnn}}(\pi(n)W)\psi_{\zigzag}(n)^{-1}\,dn.
\]
\item (\cite[Lemma~4.3]{MR3431601}) \label{part: defLW} The integral
\begin{multline*}
L_W(g):=
\int_{P\cap H\bs H}\int_{N_\GLnn\cap H_\GLnn\bs\mira\cap H_\GLnn}W(\levi(p)hg)\abs{\det p}^{-(\rkn +1)}\ dp\ dh\\=
\int_{H\cap\bar U}\per{H_{\GLnn}}((\modulus_P^{-\frac12}I(\frac12,\bar ug)W)\circ\levi)\,d\bar u
\end{multline*}
converges for any $W\in\Ind(\WhitML(\pi),\frac12)$ and defines an intertwining map
\[
\Ind(\WhitML(\pi),\frac12)\rightarrow C^{\smth}(H\bs G).
\]
\item \label{part: mintL} (second statement of \cite[Corollary 11.10]{MR3431601}) We have
\begin{equation}\label{eq: Mint}
\Mint(M^*W)=\eps_\pi^{\rkn+1}\int_{\remr}\big(\int_{H\cap\Etwon\bs\Etwon}
L_W(v \levi(\auxtwo u\auxone)w_U)\psi_{\Etwon}^{-1}(v)\,dv\big)\ du.
\end{equation}
\end{enumerate}
\end{theorem}

The first part is based on a variant of Bernstein's theorem on $\mira$-invariant distributions \cite{MR748505}.
The second part is proved by ``root killing'' -- a method which goes back at least to \cite{MR519356} and has been used
constantly ever since. The third part is essentially a formal consequence of the first part. The last (and most complicated) part
is closely related to Theorem \ref{thm: s11mt} (with $\auxthree$ as before).
We refer to \cite{MR3431601} for more details.

\begin{remark}
While the above formula for $\Mint(M^*W)$ is convenient for computing the right-hand side of \eqref{eq: aftertindep},
it is also helpful to write the integration with the variables at the right. We will do that in \eqref{eq: Mint2} below.
\end{remark}
We obtain

\begin{corollary} \label{cor: Mfactor}
Let $\pi\in\Irr_{\meta,\temp}\GLnn$. Then for any $W\in\Ind(\WhitML(\pi))\spcl_\natural$ we have
\begin{equation} \label{eq: inttE}
\int_{Z_\GLnn\bs \soment}\newhalf(W_{\frac12},t)\,\frac{dt}{\abs{\det t}}=\eps_\pi^{{\rkn}+1}\Mint(M^* {W}).
\end{equation}
\end{corollary}

Note that it is not a priori clear that the left-hand side of \eqref{eq: inttE} factors through $M^*W$.

\begin{proof}
We may assume without loss of generality that
\begin{equation}\label{eq: defW2M}
W_{\frac12}(u'\levi(m)w_Uu)=W^{\GLnn}(m)\abs{\det m}^{\frac12}\modulus_P(\levi(m))^{\frac12}\phi(u), \ m\in \GLnn, u,u'\in U
\end{equation}
with $W^{\GLnn}\in\WhitM(\pi)_{\natural}$ and $\phi\in\swrz(U)$.
We evaluate the left-hand side $I$ of \eqref{eq: inttE} using the  last expression in \eqref{eq: defoldhalf}
(where we recall that the integrand is compactly supported by Lemma~\ref{lem: convE}).
Thus,
\[
I=I'\int_U\phi(v)\psi_U^{-1}(v)\,d v
\]
where
\[
I'= \int_{\toMd(T'_\GLn)}\int_{\vrbar}\factor(t)^{-1}\abs{\det t}^{\rkn}W^{\GLnn} ( t \auxtwo r\auxone)\psi_{\vrbar}(r)\,dr\,dt.
\]
The integrand in $I'$ is compactly supported because $W^{\GLnn}\in \WhitM(\pi)_{\natural}$.
Note for $r\in \vs=\zigzag\cap\vrbar$, $\psi_{\vrbar}(r)=\psi_{\zigzag}(r)^{-1}$.
We write
\begin{multline*}
I'= \int_{\remr}\big(\int_{\vs}\int_{\toMd(T'_\GLn)}\factor(t)^{-1}\abs{\det t}^{\rkn}
W^{\GLnn} (t\auxtwo ru\auxone)\psi_{\zigzag}(r)^{-1}\,dt\,dr\big)\,du
\\=\int_{\remr}\big(\int_{\zigzag^+\bs\zigzag}\int_{\toMd(T'_\GLn)}\factor(t)^{-1}\abs{\det t}^{\rkn}
W^{\GLnn} (t\auxtwo ru\auxone)\psi_{\zigzag}(r)^{-1}\,dt\,dr\big)\,du
\\=\int_{\remr}\big(\int_{\zigzag^+\bs\zigzag}\int_{\toMd(T'_\GLn)}\factor(t)^{-1}\abs{\det t}^{\rkn}
W^{\GLnn} (tr\auxtwo u\auxone)\psi_{\zigzag}(r)^{-1}\,dt\,dr\big)\,du
\end{multline*}
(since $\auxtwo$ stabilizes $\psi_{\zigzag}$).
For the double integral in the brackets we apply part \ref{part: lem11.6} of the theorem above to $\pi(\auxtwo u\auxone)W^{\GLnn}$
(which is applicable since $W^{\GLnn}\in\WhitM(\pi)_{\natural}$). We get
\[
I'=\int_{\remr}\big(\int_{\zigzag\cap H_\GLnn\bs\zigzag}\per{H_{\GLnn}}(\pi(n\auxtwo u\auxone)W^{\GLnn})\psi_{\zigzag}(n)^{-1}\,dn\big)\ du.
\]
Thus,
\[
I=\int_{\remr}\big(\int_{\zigzag\cap H_\GLnn\bs\zigzag} \int_U
\per{H_{\GLnn}}(\pi(n\auxtwo u\auxone)W^{\GLnn})\psi_{\zigzag}(n)^{-1}\phi(v)\psi_U^{-1}(v)\,d v\,dn\big)\,du.
\]
From \eqref{eq: defW2M}, $(\modulus_P^{-\frac12}I(\frac12, \levi(m)w_Uv)W)\circ \levi=
\phi(v)\modulus_P^{\frac12}(\levi(m))\abs{\det m}^\frac12\pi(m)W^{\GLnn}$
for any $v\in U$, $m\in\GLnn$. Thus, $I$ equals
\[
\int_{\remr}\big(\int_{\zigzag\cap H_\GLnn\bs\zigzag}\int_U
\per{H_{\GLnn}}((\modulus_P^{-\frac12}I(\frac12, \levi(n\auxtwo u\auxone)w_Uv)W)\circ \levi)
\psi_{\zigzag}(n)^{-1}\psi_U(v)^{-1}\,dv\,dn\big)\,du.
\]
Since $\auxone^{-1}\remr\auxone\subset \vf$, the group $\levi(\auxone^{-1}\remr\auxone)$ stabilizes the character $\psi_U(w_U^{-1}\cdot w_U)$ on $\bar U$.
Making a change of variable
\[
v\mapsto (\levi(n\auxtwo u\auxone)w_U)^{-1}\bar v\levi(n\auxtwo u\auxone)w_U
\]
on $U$ we obtain
\[
I=\int_{ \remr}\big(\int_{\levi(\zigzag\cap H_\GLnn)\bs \Etwon}
\per{H_{\GLnn}}((\modulus_P^{-\frac12}I(\frac12,v\levi\left(\auxtwo u\auxone\right)w_U)W)\circ\levi)
\psi_{\Etwon}^{-1}(v)\,dv\big)\,du.
\]
Integrating over $H\cap\bar U$ first, we get from part \ref{part: defLW} of the theorem that
\[
I=\int_{\remr}\big(\int_{H\cap\Etwon\bs\Etwon}L_W(v\levi(\auxtwo u\auxone) w_U)\psi_{\Etwon}^{-1}(v)\,dv\big)\,du.
\]
Finally, by the last part of the theorem, this is equal to $\eps_\pi^{{\rkn}+1}\Mint(M^*W)$ as required.
\end{proof}

\subsection{}

\begin{proof}[Proof of Proposition~\ref{prop: main2}]
From Corollaries \ref{cor: tindep} and \ref{cor: Mfactor} we get that \eqref{eq: main2} holds for
$(W,\alt{W})\in\spclW\times\spclWd{\pi}$, namely
\[
\Bil(W,M(\frac12)\alt{W},\frac12)=\eps_\pi \Mint(M^*W)\Mintd(M^*\alt{W}).
\]
On the other hand, as in the proof of Lemma \ref{lem: Bnonvanish}, it follows from the definition \eqref{eq: defoldhalf} and the fact that
the image of the restriction map
$\WhitMLd(\pi)\rightarrow C(N_\Levi\bs\levi(\mira^*),\psi_{N_\Levi}^{-1})$ contains
$\swrz(N_\Levi\bs\levi(\mira^*),\psi_{N_\Levi}^{-1})$
that the linear map $\spclWd{\pi}\rightarrow \swrz(T'_{\GLn})$
given by $\alt{W}\mapsto\newhalfd(\alt{W}_{\frac12},\cdot)$ is onto.
Therefore, by Corollary \ref{cor: Mfactor} the linear form $\Mintd(M^*\alt{W})$ is nonvanishing on $\spclWd{\pi}$.
By Corollary \ref{cor: Bnonvanish} we conclude that \eqref{eq: main2} holds for
all $\alt{W}$ (not necessarily in $\spclWd{\pi}$). Proposition~\ref{prop: main2} follows.
\end{proof}

By the discussion before Proposition~\ref{prop: main2}, this concludes the proof of Theorem \ref{thm: main} and thus the proof of Theorem~\ref{thm: intromain}.

\subsection{Central character of the descent}
Assume that $\pi\in\Irr_{\meta,\temp}\GLnn$.
Let us give another expression for $\Mint(M^*W)$, which we will use to prove a formula for the so-called ``central sign'' of the descent $\des(\pi)$.

Denote by $\auxfour$ the element $\levi(\auxtwo \auxone )w_U$. A change of variables in $u$ and $v$ in \eqref{eq: Mint} gives
\begin{equation}\label{eq: Mint2}
\Mint(M^*W)=\eps_\pi^{\rkn+1}\int_{\remr^{\sharp}}\big(\int_{(H\cap\Etwon)^{\auxfour}\bs\Etwon^{\auxfour}}
L_W(\auxfour v \levi(u))\psi_{\Etwon^{\auxfour}}^{-1}(v)\,dv\big)\ du.
\end{equation}
Here $\remr^{\sharp}=\{\toU_\GLnn(X):X_{i,j}=0 \text{ if either $j>i$ or $i=\rkn$}\}$,
$\Etwon^{\auxfour}=\auxfour^{-1}\Etwon \auxfour=\levi(\zigzag^{\sharp})\ltimes  U$, with
\[
\zigzag^{\sharp}=\{\sm{n_1}{v_1}{v_2}{n_2}: n_1,n_2\in N'_{\GLn}, I_\rkn+v_1,I_\rkn+v_2\in N'_{\GLn}\},
\]
and $\psi_{\Etwon^{\auxfour}}(\levi(m)u)=\psi_{\zigzag^{\sharp}}(m)\psi_U(u)$ with $m\in \zigzag^{\sharp}$, $u\in U$ and
\[
\psi_{\zigzag^{\sharp}}(m)=\psi(-m_{1,2}-\ldots-m_{\rkn-1,\rkn}+m_{\rkn+1,\rkn+2}+\ldots+m_{2\rkn-1,2\rkn}),\ \ m\in\zigzag^{\sharp}.
\]

From the above expression we get:
\begin{proposition}\label{prop: central}
Let $\pi\in \Irr_{\gen,\meta}\GLnn$, and $\tilde \pi=\des(\pi)$. Then $\tilde\pi(\animg{-I_{2\rkn}})=\gamma_{\psi^{-1}}((-1)^{\rkn})\eps_\pi$.
\end{proposition}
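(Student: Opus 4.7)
Since $\animg{-I_{2\rkn}}$ is central in $\tilde G$, the operator $\tilde\pi(\animg{-I_{2\rkn}})$ acts as a scalar $c$ on the irreducible representation $\tilde\pi=\des(\pi)$, and $c=\tilde W(\animg{-I_{2\rkn}})/\tilde W(e)$ for any Whittaker function $\tilde W\in\tilde\pi$ with $\tilde W(e)\neq 0$. I take $\tilde W(\tilde g):=\whitform(M(\frac12)W,\Phi,\tilde g,-\frac12)$ with $W\in\spclW$ and $\Phi\in\swrz(F^{\rkn})$ chosen so that $\tilde W(e)\neq 0$; this is possible since $\tilde\pi$ is generic. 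By Lemma~\ref{L: Mint}, $\tilde W(e)=\Mint(\Phi*M^*W)$.

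For $\tilde W(\animg{-I_{2\rkn}})$ I retrace the proof of Lemma~\ref{L: Mint} with $\iota:=\toG(-I_{2\rkn})=\levi(\diag(I_{\rkn},-I_{\rkn}))$ inserted on the right. Two algebraic inputs drive the calculation. First, a direct matrix computation using the explicit form of $\gamma$ together with the fact that $-I_{2\rkn}$ is central in $G'$ yields the identity $\gamma\,\iota=\levi(m_0^*)\,\gamma$ with $m_0^*:=\diag(-I_{\rkn},I_{\rkn})$. Second, \eqref{eq: weil1} applied to $h=-I_{\rkn}\in\GLn$, combined with $\gamma_{\psi^{-1}}(a)=\weilfctr(a)^{-1}$, gives $\wevinv(\animg{-I_{2\rkn}})\Psi(X)=\weilfctr((-1)^{\rkn})^{-1}\Psi(-X)$. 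After the change of variable $v\mapsto\iota v\iota^{-1}$ on $V_\gamma\bs V$ (valid since $\iota\in\Levi$ normalizes both $V$ and $V_\gamma$) and an application of \eqref{eq: weilext2}, the computation in the proof of Lemma~\ref{L: Mint} yields
\[
\tilde W(\animg{-I_{2\rkn}})=\weilfctr((-1)^{\rkn})^{-1}\int_{\subUbar}(\Phi^-*M^*W)(\levi(m_0^*)v\gamma\epsilon_3)\oldsubUbar(v)\,dv,
\]
where $\Phi^-(X):=\Phi(-X)$; here one uses that $\levi(m_0^*)$ normalizes $\subUbar$ and stabilizes $\oldsubUbar$.

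To identify this with $\weilfctr((-1)^{\rkn})\eps_\pi\cdot\Mint(\Phi*M^*W)$, I invoke the formula \eqref{eq: forcentral} derived in the proof of Proposition~\ref{prop: Mfactor}, which rewrites $\Mint$ through the $H$-invariant functional $L_W$. Since $m_0^*\in T_\GLnn$ commutes with $E=\diag(1,-1,1,-1,\dots)$, the element $\levi(m_0^*)$ lies in $H$, and the left $H$-invariance of $L_W$ (\cite[Lemma~4.3]{1403.6787}) absorbs the $\levi(m_0^*)$-translation on the left. The reflection $\Phi\mapsto\Phi^-$ corresponds to acting by $\weilfctr((-1)^{\rkn})\wevinv(\animg{-I_{2\rkn}})$ on $\Phi$ via \eqref{eq: weil1}; combined with the defining transformation $\ell\circ\pi(\wnn)=\eps_\pi\ell$ of the $H_\GLnn$-invariant functional (cf.\ \eqref{eq:shalmetagen}), its effect on $\Mint$ contributes the factor $\weilfctr((-1)^{\rkn})^2\eps_\pi$. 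Combining all $\weilfctr$-factors (using $\weilfctr(a)\gamma_{\psi^{-1}}(a)=1$) yields $c=\weilfctr((-1)^{\rkn})^{-1}\cdot\weilfctr((-1)^{\rkn})^2\eps_\pi=\weilfctr((-1)^{\rkn})\eps_\pi$, as required.

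The hardest step will be the last one: tracking how the involution $\Phi\mapsto\Phi^-$ interacts with the $m_0^*$-translation inside $\Mint$, through \eqref{eq: forcentral} and the $H$-invariance of $L_W$, so that the resulting sign is precisely the root number $\eps_\pi$ rather than another unwanted Weil-factor or sign combination.
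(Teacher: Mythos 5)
Your overall route is the same as the paper's (insert $\toG(-I_{2\rkn})$ into the computation of Lemma~\ref{L: Mint}, pull out a Weil factor, and reduce to showing that the resulting translation multiplies $\Mint(M^*W)$ by exactly $\eps_\pi$, using the expression \eqref{eq: forcentral} through $L_W$), but the decisive step is missing. In \eqref{eq: forcentral} the extra element produced by $\animg{-I_{2\rkn}}$ does \emph{not} sit at the far left of the argument of $L_W$: it must first be conjugated across $\levi(\epsilon_2 u\neweps)w_U$ and across the $\Etwon$-integration before any invariance of $L_W$ can be invoked. That is precisely what the paper's proof does: it replaces $\neweps$ by $\neweps'\sm{}{I_{\rkn}}{\wn}{}$, uses the matrix identity $\epsilon_2\neweps'\diag(I_{\rkn},-I_{\rkn})={\bf a}\,\epsilon_2\neweps'$ with ${\bf a}$ a diagonal element times $\wnn$, checks that $\levi({\bf a})$ stabilizes $(\Etwon,\charEtwon)$ and normalizes $H$, and only then gets the factor from $L_W(\levi({\bf a})\cdot)=\eps_\pi L_W(\cdot)$, i.e.\ from the $\wnn$-component hitting the $H_{\GLnn}$-invariant functional. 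In your sketch the element $\levi(m_0^*)\in H$ is declared ``absorbed for free'' by $H$-invariance (it is not in the absorbable position), and the needed factor $\weilfctr((-1)^{\rkn})^2\eps_\pi$ is then attributed to the reflection $\Phi\mapsto\Phi^-$ ``combined with'' $\ell\circ\pi(\wnn)=\eps_\pi\ell$ with no argument at all; the reflection of $\Phi$ is just a sign change in the $X_{\rkn}$-convolution variable and has no intrinsic connection to $\wnn$, so this constant is reverse-engineered from the desired answer. In other words, the identity \eqref{eq: centralchar}, which is the actual content of the proposition, is exactly the step you leave open (and you say so). I also note that your preliminary formula is not verified: your constant $\weilfctr((-1)^{\rkn})^{-1}$ already disagrees with the paper's $\weilfctr((-1)^{\rkn})$ (the roots of unity $\beta_{\psi^{-1}}$ are not pinned down by \eqref{eq: weil1} alone), and sign-sensitive details such as the evaluation point $-\xi_{\rkn}$, the fate of $\epsilon_3$ (note $\epsilon_3\toG(-I_{2\rkn})=\toG(-I_{2\rkn})\epsilon_3^{-1}$) and the character $\oldsubUbar$ are not tracked.

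There are also two scope problems. The proposition does not assume $\des(\pi)$ irreducible (the paper remarks on this explicitly), so the Schur-lemma reduction to the single ratio $\tilde W(\animg{-I_{2\rkn}})/\tilde W(e)$ is not available in general; the paper instead proves the value identity for every generator $\whitform(M(\frac12)W,\Phi,\cdot,-\frac12)$, which suffices because this generating family is stable under right translation. Moreover, \eqref{eq: forcentral}, the functional $L_W$ and the input from \cite{1403.6787} require $\pi$ tempered (the paper first assumes $\pi$ \good\ and tempered and then removes these hypotheses by the deformation/classification argument of \S\ref{sec: firstred} via Theorem~\ref{thm: genmetaclassification}); your proposal neither restricts to that case nor supplies the reduction, so even with the computation completed it would not cover the stated generality.
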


Recall that $\tilde\pi(\animg{-I_{2\rkn}})/\gamma_{\psi^{-1}}((-1)^{\rkn})$ is the central sign of $\tilde\pi$ (with respect to $\psi^{-1}$)
introduced by Gan--Savin \cite{MR2999299}.
(However, we do not assume $\tilde\pi$ is irreducible.)

\begin{proof}
We need to show $\nwhitform(M^*W,\animg{-I_{2\rkn}})=\gamma_{\psi^{-1}}((-1)^{\rkn})\eps_\pi\Mint(M^*W)$. For the moment assume that $\pi$ is tempered.
By Lemmas~\ref{lem: equnwhit} and \eqref{eq: Mint2}, $\nwhitform(M^*W,\animg{-I_{2\rkn}})$ equals
\[
\gamma_{\psi^{-1}}((-1)^{\rkn})\eps_\pi^{\rkn+1}\int_{\remr^{\sharp}}\big(\int_{(H\cap\Etwon)^{\auxfour}\bs\Etwon^{\auxfour}}
L_W(\auxfour v \levi(u)\toG(-I_{2\rkn}))\psi_{\Etwon^{\auxfour}}^{-1}(v)\,dv\big)\ du.
\]
Conjugating $v\levi(u)$ by $\toG(-I_{2\rkn})$ we obtain
\[
\gamma_{\psi^{-1}}((-1)^{\rkn})\eps_\pi^{\rkn+1}\int_{\remr^{\sharp}}\big(\int_{(H\cap\Etwon)^{\auxfour}\bs\Etwon^{\auxfour}}
L_W(\auxfour\toG(-I_{2\rkn}) v \levi(u))\psi_{\Etwon^{\auxfour}}^{-1}(v)\,dv\big)\ du.
\]
Here we need to verify that $\toG(-I_{2\rkn})$ stabilizes $(\Etwon^{\auxfour},\psi_{\Etwon^{\auxfour}})$ (which is clear) and normalizes $H^{\auxfour}$.
The second fact follows from the observation that $\auxfour \toG(-I_{2\rkn})=\levi({\bf a})\auxfour$ where
\[
{\bf a}=\diag(\frac12\ddg,2\ddg^*)\wnn\in H_{\GLnn}\wnn.
\]
Since $L_W(\levi(\wnn)\cdot)=\eps_\pi L_W(\cdot)$ \cite[(4.8)]{MR3431601}, we get
$$
\nwhitform(M^*W,\animg{-I_{2\rkn}})=\gamma_{\psi^{-1}}((-1)^{\rkn})\eps_\pi^{\rkn}\int_{\remr^{\sharp}}\big(\int_{(H\cap\Etwon)^{\auxfour}\bs\Etwon^{\auxfour}}
L_W(\auxfour v \levi(u))\psi_{\Etwon^{\auxfour}}^{-1}(v)\,dv\big)\ du.
$$
The claim now follows from the comparison with \eqref{eq: Mint2}.
Finally, the same argument as in \S\ref{sec: firstred} using the classification result Theorem~\ref{thm: genmetaclassification}
gives the proposition for all $\pi\in \Irr_{\gen,\meta}\GLnn$.
\end{proof}

\appendix

\section{Non-vanishing of Bessel functions} \label{sec: nonvanishing}
Let $G$ be a split group over a $p$-adic field.\footnote{The notation in the appendix is different from the body of the paper.}
Let $B=A\ltimes N$ be a Borel subgroup of $G$.
Let $G\spcl=Bw_0B$ be the open Bruhat cell where $w_0$ is the longest element of the Weyl group.
Fix a non-degenerate continuous character $\psi_N$ of $N$.
For any $\pi\in\Irr_{\gen,\psi_N}(G)$, the Bessel function $\Bes_\pi=\Bes_\pi^{\psi_N}$ of $\pi$ with respect to $\psi_N$
is the locally constant function on $G\spcl$ given by the relation
\begin{equation}\label{eq: defbes}
\stint_N W(gn)\psi_N(n)^{-1}\ dn=\Bes_\pi(g)W(e)
\end{equation}
for any $W\in\Whit^{\psi_N}(\pi)$ (see \cite{MR3021791}).
In this section we prove the following result.
\begin{theorem} \label{thm: nonvanishingBesselfunction}
For any tempered $\pi\in\Irr_{\gen,\psi_N}(G)$ the function $\Bes_\pi$ is not identically zero on $G\spcl$.
\end{theorem}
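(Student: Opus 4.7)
The plan is to argue by contradiction. Suppose $\Bes_\pi \equiv 0$ on $G\spcl$. Since $\pi$ is generic, pick $W \in \Whit^{\psi_N}(\pi)$ with $W(e) \ne 0$; the defining relation \eqref{eq: defbes} then gives
\[
\stint_N W(gn)\psi_N(n)^{-1}\,dn = 0 \qquad \text{for every } g \in G\spcl.
\]
I will show this forces $W$ to vanish on $G\spcl$, which contradicts $W(e) \ne 0$ by the smoothness of $W$ and the density of $G\spcl$ in $G$.

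Fix $\phi \in \swrz(G\spcl)$. The uniform stabilization of the stable integral from \cite{MR3021791} (compare Lemma~\ref{L: stable}) supplies a single $N_1 \in \csgr(N)$ such that
\[
\stint_N W(gn)\psi_N(n)^{-1}\,dn = \int_{N_1} W(gn)\psi_N(n)^{-1}\,dn
\]
for every $g$ in the compact set $\mathrm{supp}(\phi)$. Fubini and the substitution $g \mapsto gn^{-1}$ in the resulting honest double integral give
\[
0 = \int_G W(g)\Phi(g)\,dg, \qquad \Phi(g) := \int_{N_1}\phi(gn^{-1})\psi_N(n)^{-1}\,dn \in \swrz(G\spcl).
\]

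Using the equivariance $W(ng) = \psi_N(n)W(g)$, the integral descends to $N \bs G$ as $\int_{N \bs G} W(g)\tilde\Phi(g)\,dg = 0$, where
\[
\tilde\Phi(g) := \int_N \psi_N(n)\Phi(ng)\,dn = \vol(N_1)\int_N \psi_N(n)\phi(ng)\,dn \in \swrz(N \bs G\spcl,\psi_N^{-1}).
\]
As $\phi$ ranges over $\swrz(G\spcl)$, $\tilde\Phi$ exhausts $\swrz(N \bs G\spcl,\psi_N^{-1})$ by the standard surjectivity of integration along the fibers of the principal $N$-bundle $G\spcl \to N \bs G\spcl$. Non-degeneracy of the pairing between $C^\infty(N \bs G,\psi_N)$ and $\swrz(N \bs G,\psi_N^{-1})$ then yields $W|_{G\spcl} = 0$, completing the contradiction. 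The main technical ingredient is the uniform stabilization of \cite{MR3021791}; once that is in hand, the remainder is a routine descent and duality argument, and temperedness of $\pi$ plays no role in this step.
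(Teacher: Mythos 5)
The decisive step of your argument fails: the test functions $\tilde\Phi$ you construct do not exhaust $\swrz(N\bs G\spcl,\psi_N^{-1})$, and cannot. Since the convolution defining $\Phi$ sits on the right of $g$, one has $\Phi(gn_0)=\psi_N(n_0)^{-1}\Phi(g)$ for all $n_0\in N_1$, hence $\tilde\Phi$ is right $(N_1,\psi_N^{-1})$-equivariant; and $N_1$ is not at your disposal, because it is forced to be large enough that the stable integral has already stabilized at $N_1$ for every $g\in\mathrm{supp}\,\phi$. For such $\tilde\Phi$ the vanishing $\int_{N\bs G}W\tilde\Phi\,dg=0$ is a tautology rather than new information: unfolding the right $N_1$-average inside the pairing reproduces $\int_{N_1}W(gn)\psi_N(n)^{-1}\,dn$, which is exactly the stable integral you assumed to vanish. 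In particular the asserted simplification $\tilde\Phi(g)=\vol(N_1)\int_N\psi_N(n)\phi(ng)\,dn$ is false (the $N_1$-convolution is on the right and the $N$-integration on the left, so they do not merge into a volume factor), and it is this miscomputation that makes the exhaustion look like the standard fiber-integration surjectivity. Your family of test functions only ever recomputes the hypothesis, so no conclusion about $W\rest_{G\spcl}$ can be drawn, and the strategy cannot be repaired by shrinking $N_1$, since stabilization forces it to be large.

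Your closing remark that temperedness plays no role is the symptom of the problem: the manipulations use nothing about $W$ beyond left $(N,\psi_N)$-equivariance, smoothness and stabilization, and for such functions the implication ``$\stint_N W(gn)\psi_N(n)^{-1}\,dn=0$ for all $g\in G\spcl$ implies $W\rest_{G\spcl}=0$'' is simply false: on $\operatorname{SL}_2$, for instance, $W(utw_0v)=\psi_N(u)f(t)\psi'(v)$ with $f$ compactly supported and $\psi'\ne\psi_N$, extended by zero off the big cell, has all stable coefficients equal to zero yet does not vanish on $G\spcl$. Genuine representation-theoretic input is required, and this is what the paper supplies: it proves the Bessel distribution $B_\pi$ is nonzero, uses temperedness (through the convergence of $\int_{N^{\der}}\Xi(n)\,dn$) to express $B_\pi(f)$ as integration of $f$ against the function $\alpha_n$ obtained from a matrix coefficient by two-sided partial Fourier transforms over the subgroups $N_n$, and then identifies $\alpha_n$ on $G\spcl$ with the toral average $\Bes_\pi^{A_n}$ of the Bessel function, from which the nonvanishing of $\Bes_\pi$ on $G\spcl$ follows.
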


The argument is similar to the one in \cite{IZ}.

Fix a tempered $\pi\in\Irr_{\gen,\psi_N}(G)$ and realize it on its Whittaker model $\Whit^{\psi_N}(\pi)$.
Similarly, realize $\pi^\vee$ on $\Whit^{\psi_N^{-1}}(\pi^\vee)$. Thus, we get a pairing $(\cdot,\cdot)$ on
$\Whit^{\psi_N}(\pi)\times\Whit^{\psi_N^{-1}}(\pi^\vee)$.

Fix $\d{W}_0\in\Whit^{\psi_N^{-1}}(\pi^\vee)$ such that
\[
\stint_N(\pi(n)W,\d{W}_0)\psi_N(n)^{-1}\ dn=W(e)
\]
for all $W\in\Whit^{\psi_N}(\pi)$.
This is possible by \cite[Propositions~2.3, 2.10]{MR3267120}.
Then for any $g\in G$ we have
\begin{equation} \label{eq: W0vee}
W(g)=\stint_N(\pi(ng)W,\d{W}_0)\psi_N(n)^{-1}\ dn.
\end{equation}
Similarly, fix $W_0\in\Whit^{\psi_N}(\pi)$ such that
\begin{equation} \label{eq: W0}
\stint_N(W_0,\pi^\vee(n)W^\vee)\psi_N(n)\ dn=W^\vee(e)
\end{equation}
for all $W^\vee\in\Whit^{\psi_N^{-1}}(\pi^\vee)$.

Set $\Phi(g)=(\pi(g)W_0,W_0^\vee)$. Let $N^{\der}$ be the derived group of $N$.
Also let $\Xi$ be the Harish-Chandra function on $G$ (see e.g.~\cite{MR1989693}).

\begin{lemma} \label{lem: locallyl1}
The function
\[
g\mapsto\int_{N^{\der}}\int_{N^{\der}}\Phi(n_1gn_2)\ dn_1\ dn_2
\]
on $G\spcl$ in locally $L^1$ on $G$.
Moreover,
\[
g\mapsto\int_{N^{\der}}\int_{N^{\der}}\Xi(n_1gn_2)\ dn_1\ dn_2
\]
is locally $L^1$ on $G$.
\end{lemma}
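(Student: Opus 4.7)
The plan is first to reduce the matrix-coefficient statement to the $\Xi$-statement via temperedness, and then to attack the $\Xi$-double integral using Harish-Chandra's estimates together with the structural fact that $N^{\der}$ omits the simple root subgroups.

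For the reduction, since $\pi$ is tempered, the standard Harish-Chandra bound (in the form of Silberger/Waldspurger, or the Cowling--Haagerup--Howe formulation in the $p$-adic case) yields a constant $C>0$ with
\[
|\Phi(g)| = |(\pi(g)W_0,W_0^\vee)| \le C\,\Xi(g) \quad \text{for all } g \in G.
\]
Thus local $L^1$-ness of $h_\Xi(g):=\int_{N^{\der}}\int_{N^{\der}}\Xi(n_1 g n_2)\,dn_1\,dn_2$ implies the corresponding statement for $\Phi$, and it suffices to handle $\Xi$. Pointwise convergence of the double integral on $G\spcl$ is to be verified by parametrizing $g = \bar n a n \in \bar N A N$, writing $n_1 g n_2 = n_1 \bar n\, a\,(n n_2)$, and using the Bruhat decomposition of $n_1\bar n$ (generically in the open cell) together with Harish-Chandra's explicit estimate $\Xi(k_1 a k_2)\le C\,\delta_B(a)^{1/2}(1+\sigma(a))^d$ on $A^+$.

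For local $L^1$-ness on $G$, I would test against an arbitrary non-negative $f\in C_c^\infty(G)$ and show $\int_G f(g) h_\Xi(g)\,dg<\infty$. By positivity and Fubini this equals
\[
\int_{N^{\der}}\int_{N^{\der}}\Bigl(\int_G \Xi(g)\,f(n_1^{-1} g n_2^{-1})\,dg\Bigr) dn_1\,dn_2.
\]
Using bi-$K$-invariance of $\Xi$, the Cartan decomposition, and $K$-compactness of $\mathrm{supp}(f)$, I would bound the inner integral explicitly and show that the resulting function of $(n_1,n_2)$ is integrable over $N^{\der}\times N^{\der}$. The essential structural input is that $N^{\der}$ is spanned by the non-simple positive root groups, so its modular character $\delta_{N^{\der}}=\prod_{\alpha\in\Phi^+\setminus\Delta}|\alpha|^{m_\alpha}$ is a strictly proper divisor of $\delta_B$; the quotient $\delta_B/\delta_{N^{\der}}$ involves precisely the simple roots. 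This is what allows $\delta_B^{1/2}$-type weights coming from Harish-Chandra's bounds to be absorbed when integrating against the measure on $N^{\der}$, while the analogous integral over all of $N$ would diverge.

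The main obstacle is carrying out the last convergence step rigorously: one must control $(n_1,n_2)\mapsto \int_G \Xi(g)f(n_1^{-1}gn_2^{-1})\,dg$ in a form that decomposes cleanly along the non-simple root subgroups of $N^{\der}$. Following the strategy of \cite{IZ}, I would do this by reducing to a Cartan/Iwasawa computation root-by-root, in which the Harish-Chandra bound on $\Xi$ along each non-simple direction is exactly strong enough (by a margin of $\prod_{\alpha\in\Delta}|\alpha|^{m_\alpha}$) to ensure absolute convergence, while the same argument applied with $N$ in place of $N^{\der}$ would fail precisely in the simple-root directions. Combining the two steps yields local $L^1$-ness on all of $G$, extending $h_\Xi$ (and hence $h_\Phi$) from $G\spcl$ to a locally integrable function.
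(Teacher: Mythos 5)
Your overall skeleton is the same as the paper's (which simply invokes the argument of \cite[Lemma A.4]{IZ}): bound $\abs{\Phi}$ by $\Xi$ using temperedness of $\pi$ and the Cowling--Haagerup--Howe inequality for the $K$-finite vectors $W_0,W_0^\vee$, then test $h_\Xi$ against $f\in \swrz(G)$, apply Fubini by positivity, and exploit the structure of $N^{\der}$. That reduction and that skeleton are fine.

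The gap is in the step you yourself flag as ``the main obstacle,'' and your proposed route to it does not close it. The clean way to finish is via two precise facts. First, a separation inequality: for $f\in \swrz(G)$ nonnegative one has $\int_G f(g)\,\Xi(xgy)\,dg\le c_f\,\Xi(x)\,\Xi(y)$ for all $x,y\in G$; this follows by dominating $f$ by a multiple of the characteristic function of a set $K\Omega_0K$ with $\Omega_0\subset A$ compact and using the doubling identity $\int_K\int_K\Xi(xk_1ak_2y)\,dk_1\,dk_2=\Xi(x)\Xi(a)\Xi(y)$. Second, and crucially, the convergence of $\int_{N^{\der}}\Xi(n)\,dn$ (this is \cite[Lemma 6.3.1]{1203.0039}, and is exactly the input the paper cites); together these give $\int_G f(g)h_\Xi(g)\,dg\le c_f\bigl(\int_{N^{\der}}\Xi(n)\,dn\bigr)^2<\infty$. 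Your substitute for the second fact --- a ``root-by-root'' Cartan/Iwasawa computation in which the Harish-Chandra bound is ``exactly strong enough by a margin of $\prod_{\alpha\in\Delta_0}\abs{\alpha}^{m_\alpha}$'' --- is not a proof: $\Xi$ does not factor along the root subgroups of $N^{\der}$ (only the Haar measure does), and the comparison of $\modulus_{N^{\der}}$ with $\modulus_B$ that you invoke is a heuristic for why $\int_N\Xi$ diverges while $\int_{N^{\der}}\Xi$ converges, not an argument. The actual proof of $\int_{N^{\der}}\Xi(n)\,dn<\infty$ requires a genuine estimate for $\Xi$ restricted to $N^{\der}$ (via the Iwasawa decomposition of elements of $N^{\der}$ and the wavefront/volume estimates of Sakellaridis--Venkatesh), so you must either supply that estimate or cite it. A minor further point: the pointwise convergence on $G\spcl$ that you propose to verify separately by Bruhat decomposition is not needed for local integrability; in the paper it is deduced afterwards (Remark \ref{r: appendB}) from local integrability together with local constancy of $h_\Xi$ on the open cell.
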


\begin{proof}
The argument is exactly as in \cite[Lemma A.4]{IZ} using the convergence of
$\int_{N^{\der}}\Xi(n)\ dn$ \cite[Lemma 6.3.1]{1203.0039}.
\end{proof}

\begin{remark}\label{r: appendB}
Note that $g\mapsto\int_{N^{\der}}\int_{N^{\der}}\Xi(n_1gn_2)\ dn_1\ dn_2$
is locally constant on $G\spcl$. Thus, its local integrability on $G$
implies its convergence for any $g\in G\spcl$.
\end{remark}

For any $f\in \swrz(G)$ let $L_f(W)=\int_G f(g)W(g)\ dg$, $W\in\Whit^{\psi_N}(\pi)$.
Then $L_f\in\pi^\vee$. Let $L_f^*$ be the corresponding element in $\Whit^{\psi_N^{-1}}(\pi^\vee)$
and set $B_\pi(f)=L_f^*(e)$.
The distribution $f\mapsto B_\pi(f)$ is called the \emph{Bessel distribution}.
It is non-zero: we can choose $f\in \swrz(G)$ such that $L_f$ is non-trivial,
and then, by translating $f$ if necessary we can arrange that $L_f^*(e)\ne0$.

Note that by \eqref{eq: W0},
\[
B_\pi(f)=\stint_N(W_0,\pi^\vee(n)L_f^*)\psi_N(n)\ dn=\stint_N\big(\int_G f(g)W_0(gn^{-1})\ dg\big)\psi_N(n)\ dn
\]
and therefore by \eqref{eq: W0vee} we have
\begin{equation} \label{eq: firstBessel}
B_\pi(f)=\stint_N\big(\int_G f(g)\big(\stint_N \Phi(n_1gn_2^{-1})\psi_N(n_1)^{-1}\ dn_1\big)\ dg\big)\ \psi_N(n_2)\ dn_2.
\end{equation}

Let $A^1$ be the maximal compact subgroup of $A$.
Fix $\Omega_0\in\csgr(N)$ which is invariant under conjugation by $A^1$ (e.g., take $\Omega_0=N\cap K$).
Fix an element $a\in A$ such that $\abs{\alpha(a)}>1$ for all $\alpha\in\Delta_0$.
Consider the sequence $\Omega_n=a^n\Omega_0a^{-n}\in\csgr(N)$, $n=1,2,\dots$.
Any $\Omega_n$ is invariant under conjugation by $A^1$,
$\Omega_1\subset\Omega_2\subset\dots$ and $\cup\Omega_n=N$.

Let $A^d=A\cap G^{\der}$. Consider the family
\[
A_n=\{t\in A^d:\abs{\alpha(t)-1}\le q^{-n}\text{ for all }\alpha\in\Delta_0\}\in\csgr(A^d)
\]
which forms a basis of neighborhoods of $1$ for $A^d$.
We only consider $n$ sufficiently large so that the image of $A_n$ under the homomorphism $t\in A\mapsto(\alpha(t))_{\alpha\in\Delta_0}\in (F^*)^{\Delta_0}$
is $\prod_{\alpha\in\Delta_0}(1+\varpi^n\OO)$ where $\varpi$ is a uniformizer of $\OO$.

For any $\alpha\in\Delta_0$ choose a parameterization $x_\alpha:F\rightarrow N_\alpha$ such that $\psi_N\circ x_\alpha$ is trivial on $\OO$
but not on $\varpi^{-1}\OO$.
Let $N_n$ be the group generated by $ \langle x_\alpha(\varpi^{-n}\OO),\alpha\in\Delta_0\rangle$
and the derived group $N^{\der}$ of $N$. The following lemma is clear:

\begin{lemma} \label{lem: Anave}
For any $u\in N$ we have
\[
(\vol A_n)^{-1}\int_{A_n}\psi_N(tut^{-1})\ dt=\begin{cases}\psi_N(u)&u\in N_n,\\
0&\text{otherwise.}\end{cases}
\]
\end{lemma}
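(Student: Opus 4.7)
The plan is to reduce to the standard orthogonality for additive characters on $F$. Since $\psi_N$ is trivial on $N^{\der}$ and $A$ normalizes $N^{\der}$, the function $u \mapsto \psi_N(tut^{-1})$ depends only on the class of $u$ in the abelianization $N/N^{\der} \cong \bigoplus_{\alpha \in \Delta_0} N_\alpha$. Hence I can write any $u \in N$ in the form $u \equiv \prod_{\alpha \in \Delta_0} x_\alpha(c_\alpha) \pmod{N^{\der}}$ for uniquely determined $c_\alpha \in F$, and using $t x_\alpha(y) t^{-1} = x_\alpha(\alpha(t) y)$ I obtain
\[
\psi_N(tut^{-1}) = \prod_{\alpha \in \Delta_0} \psi_\alpha(\alpha(t) c_\alpha),
\]
where $\psi_\alpha := \psi_N \circ x_\alpha$ has conductor exactly $\OO$ by the choice of $x_\alpha$ made just before the lemma.

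Next I would use the parameterization $t \mapsto (\alpha(t))_{\alpha \in \Delta_0}$ which, for $n$ sufficiently large, identifies $A_n$ measure-preservingly (up to a fixed Jacobian depending only on $n$) with $\prod_{\alpha} (1 + \varpi^n \OO)$. Writing $\alpha(t) = 1 + \epsilon_\alpha$ with $\epsilon_\alpha \in \varpi^n \OO$ and factoring $\psi_\alpha((1+\epsilon_\alpha)c_\alpha) = \psi_\alpha(c_\alpha)\psi_\alpha(\epsilon_\alpha c_\alpha)$, the prefactor collects to $\prod_\alpha \psi_\alpha(c_\alpha) = \psi_N(u)$, so the average becomes
\[
\frac{1}{\vol A_n}\int_{A_n}\psi_N(tut^{-1})\,dt
= \psi_N(u) \prod_{\alpha \in \Delta_0} \frac{1}{\vol(\varpi^n \OO)} \int_{\varpi^n \OO} \psi_\alpha(\epsilon\, c_\alpha)\,d\epsilon.
\]
Since $\psi_\alpha$ has conductor $\OO$, each inner integral equals $\vol(\varpi^n \OO)$ when $c_\alpha \in \varpi^{-n}\OO$ and vanishes otherwise. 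The joint condition $c_\alpha \in \varpi^{-n}\OO$ for every $\alpha \in \Delta_0$ is exactly the statement that $u$ lies in the group generated by $\langle x_\alpha(\varpi^{-n}\OO) : \alpha \in \Delta_0 \rangle$ together with $N^{\der}$, i.e., $u \in N_n$.

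There is no real obstacle here; the only thing that genuinely uses the setup is the matching between the conductor of $\psi_\alpha$ and the scaling factor $\varpi^n$ defining both $A_n$ and $N_n$, which is built into the normalizations. This is precisely why the author labels the lemma as clear.
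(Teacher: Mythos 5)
Your argument is correct, and it is precisely the standard character-orthogonality computation the authors have in mind when they state that "the following lemma is clear" and omit the proof: reduce modulo $N^{\der}$, use $tx_\alpha(y)t^{-1}=x_\alpha(\alpha(t)y)$ together with the normalization that the image of $A_n$ under $t\mapsto(\alpha(t))_\alpha$ is $\prod_\alpha(1+\varpi^n\OO)$ and that $\psi_N\circ x_\alpha$ has conductor $\OO$. All the normalizations match as you say, so there is nothing to add.
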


Next we prove:

\begin{lemma} \label{lem: besselAn}
Suppose that $f$ and $\Phi$ are bi-invariant under $A_n$.
Then we have
\[
B_\pi(f)=\int_Gf(g)\alpha_n(g)\ dg
\]
where
\[
\alpha_n(g)=\int_{N_n}\int_{N_n}\Phi(n_1gn_2^{-1})\psi_N(n_1)^{-1}\psi_N(n_2)\ dn_2\ dn_1.
\]
\end{lemma}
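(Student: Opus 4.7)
The plan is to exploit the $A_n$-bi-invariance of $f$ and of $\Phi$ to convert the stable integrals in \eqref{eq: firstBessel} into integrals over $N_n$ via Lemma~\ref{lem: Anave}. First I would insert the trivial identity
\[
f(g)=\vol(A_n)^{-2}\int_{A_n}\int_{A_n}f(t_1gt_2^{-1})\,dt_1\,dt_2
\]
into \eqref{eq: firstBessel} and then change variables $g\mapsto t_1^{-1}gt_2$ in the $G$-integral (which preserves both $f$ and $dg$). The argument of $\Phi$ becomes
\[
n_1t_1^{-1}gt_2n_2^{-1}=t_1^{-1}\bigl[(t_1n_1t_1^{-1})\,g\,(t_2n_2t_2^{-1})^{-1}\bigr]t_2,
\]
so the bi-$A_n$-invariance of $\Phi$ lets me strip off the outer $t_1,t_2$. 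Substituting $n_i\mapsto t_i^{-1}n_it_i$ in the stable integrals (legitimate since $A_n\subset A^1$ preserves Haar measure on $N$ and conjugation by the compact group $A_n$ sends a cofinal family in $\csgr(N)$ to itself, so the stabilization thresholds of $\stint_N$ translate correctly) moves all dependence on the $t_i$ into the characters, yielding
\[
B_\pi(f)=\frac{1}{\vol(A_n)^2}\int_{A_n}\int_{A_n}\stint_N\int_Gf(g)\stint_N\Phi(n_1gn_2^{-1})\psi_N(t_1^{-1}n_1t_1)^{-1}\,dn_1\,dg\,\psi_N(t_2^{-1}n_2t_2)\,dn_2\,dt_1\,dt_2.
\]

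Next I would choose $\Omega\in\csgr(N)$ large enough that both stable integrals stabilize at $\Omega$ uniformly in $t_1,t_2\in A_n$; this is possible because $A_n$ is compact and normalizes a cofinal family of compact open subgroups of $N$, while $f$ and $\Phi$ have fixed invariance data. Replacing $\stint_N$ by $\int_\Omega$ then reduces the expression to an integral of a jointly continuous integrand over a product of compact sets, so Fubini permits me to move the $A_n$-integrations next to the character factors. Applying Lemma~\ref{lem: Anave} to $\psi_N$ and to $\psi_N^{-1}$ (using that the Haar measure on $A_n$ is invariant under $t\mapsto t^{-1}$, and that the defining condition on $\psi_N\circ x_\alpha$ is symmetric under $\psi_N\leftrightarrow\psi_N^{-1}$, so the group $N_n$ is the same in both cases), the two $A_n$-averages produce $\chi_{N_n}(n_i)\psi_N(n_i)^{\pm1}$, giving
\[
B_\pi(f)=\int_Gf(g)\int_{\Omega\cap N_n}\int_{\Omega\cap N_n}\Phi(n_1gn_2^{-1})\psi_N(n_1)^{-1}\psi_N(n_2)\,dn_1\,dn_2\,dg.
\]

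Finally, to pass from $\Omega\cap N_n$ to $N_n$ in the inner integrations and obtain $\alpha_n(g)$, I would invoke absolute convergence of the double integral defining $\alpha_n(g)$ for $g\in G\spcl$: since $N_n/N^{\der}$ is compact and Lemma~\ref{lem: locallyl1} gives integrability over $N^{\der}\times N^{\der}$, the integrand $|\Phi(n_1gn_2^{-1})|$ is integrable over $N_n\times N_n$, and the resulting function of $g$ is locally $L^1$ on $G$. A dominated convergence argument as $\Omega$ exhausts $N$ then produces the claim. The main technical obstacle is the uniform stabilization of $\stint_N$ for $t_1,t_2$ ranging over $A_n$ together with the Fubini swap between the stable integrals and the $A_n$-averages; both rely essentially on the inclusion $A_n\subset A^1$ and on the compactness of $A_n$.
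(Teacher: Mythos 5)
Your proposal is correct and follows essentially the same route as the paper: exploit the $A_n$-bi-invariance of $f$ and $\Phi$ to conjugate the unipotent variables, average over $A_n$ and apply Lemma~\ref{lem: Anave} to cut the stable integrals down to $N_n$, then remove the truncation using the absolute convergence supplied by Lemma~\ref{lem: locallyl1}. The only cosmetic difference is that you perform the left and right averages simultaneously, whereas the paper does them one at a time (first stabilizing the outer integral at some $\Omega_m$, averaging on the right, then stabilizing the inner integral at $\Omega_{m'}$ and averaging on the left).
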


\begin{remark}
Of course we cannot conclude from the lemma by itself that $B_\pi$ is given by a locally $L^1$ function.
(This is conjectured to be the case.)
\end{remark}

\begin{proof}
We start with \eqref{eq: firstBessel}. Let $m$ be such that
\[
B_\pi(f)=\int_{\Omega_m}\int_G\big(\stint_N f(g)\Phi(n_1gn_2^{-1})\psi_N(n_1)^{-1}\ dn_1\big)\ dg\ \psi_N(n_2)\ dn_2.
\]
Then for any $t\in A_n$ we have
\begin{multline*}
B_\pi(f)=\int_{\Omega_m}\int_G\big(\stint_N f(gt)\Phi(n_1gn_2^{-1}t)\psi_N(n_1)^{-1}\ dn_1\big)\ dg\ \psi_N(n_2)\ dn_2\\=
\int_{\Omega_m}\int_G\big(\stint_N f(gt)\Phi(n_1gtn_2^{-1})\psi_N(n_1)^{-1}\ dn_1\big)\ dg\ \psi_N(tn_2t^{-1})\ dn_2\\=
\int_{\Omega_m}\int_G\big(\stint_N f(g)\Phi(n_1gn_2^{-1})\psi_N(n_1)^{-1}\ dn_1\big)\ dg\ \psi_N(tn_2t^{-1})\ dn_2.
\end{multline*}
Averaging over $t\in A_n$ we get
\[
B_\pi(f)=\int_{N_n\cap\Omega_m}\int_G\big(\stint_N f(g)\Phi(n_1gn_2^{-1})\psi_N(n_1)^{-1}\ dn_1\big)\ dg\  \psi_N(n_2)\ dn_2
\]
by Lemma \ref{lem: Anave}. Thus, for $m'$ sufficiently large (depending on $f$ and $m$) we have
\[
B_\pi(f)=\int_{N_n\cap\Omega_m}\int_G\int_{\Omega_{m'}}f(g)\Phi(n_1gn_2^{-1})\psi_N(n_1)^{-1}\ dn_1\ dg\  \psi_N(n_2)\ dn_2.
\]
Once again, for any $t\in A_n$
\begin{multline*}
B_\pi(f)=\int_{N_n\cap\Omega_m}\int_G\int_{\Omega_{m'}} f(tg)\Phi(tn_1gn_2^{-1})\psi_N(n_1)^{-1}\ dn_1\ dg\ \psi_N(n_2)\ dn_2\\=
\int_{N_n\cap\Omega_m}\int_G\int_{\Omega_{m'}} f(tg)\Phi(n_1tgn_2^{-1})\psi_N(t^{-1}n_1t)^{-1}\ dn_1\ dg\ \psi_N(n_2)\ dn_2\\=
\int_{N_n\cap\Omega_m}\int_G\int_{\Omega_{m'}} f(g)\Phi(n_1gn_2^{-1})\psi_N(t^{-1}n_1t)^{-1}\ dn_1\ dg\ \psi_N(n_2)\ dn_2.
\end{multline*}
As before, averaging over $A_n$ we get
\[
B_\pi(f)=\int_{N_n\cap\Omega_m}\int_G\int_{N_n\cap\Omega_{m'}} f(g)\Phi(n_1gn_2^{-1})\psi_N(n_1)^{-1}\ dn_1\ dg\ \psi_N(n_2)\ dn_2.
\]
Since $m$ and $m'$ can be chosen arbitrarily large, the lemma follows from the convergence of
\[
\int_{N_n}\int_G\int_{N_n}\abs{f(g)\Phi(n_1gn_2^{-1})}\ dn_1\ dg\ dn_2,
\]
i.e., Lemma \ref{lem: locallyl1}.
\end{proof}

Analogously, we prove:

\begin{lemma} \label{lem: besAn}
For $g\in G\spcl$, let $\Bes_\pi^{A_n}(g):=\vol(A_n)^{-2}\int_{A_n}\int_{A_n}\Bes_\pi(t_1gt_2)\ dt_1\ dt_2$.
Suppose that $\Phi$ is bi-invariant under $A_n$. Then
\[
\Bes_\pi^{A_n}(g)W_0(e)=\alpha_n(g)
\]
for all $g\in G\spcl$.
\end{lemma}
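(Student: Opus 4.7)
The plan is to follow the template of the proof of Lemma~\ref{lem: besselAn}, with the double $A_n$-average $\Bes_\pi^{A_n}(g)W_0(e)$ playing the role previously played by $B_\pi(f)$. First, for fixed $g\in G\spcl$ and $t_1,t_2\in A_n$, I would apply the defining relation \eqref{eq: defbes} of the Bessel function (with the test Whittaker function $W_0$) together with \eqref{eq: W0vee} applied to $W=W_0$ to expand
\[
\Bes_\pi(t_1 g t_2)W_0(e)=\stint_N\stint_N\Phi(n' t_1 g t_2 n)\psi_N(n')^{-1}\psi_N(n)^{-1}\,dn'\,dn,
\]
using that $t_1 g t_2\in G\spcl$ since $A$ normalizes the big cell.

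Next, I would perform the change of variables $n'\mapsto t_1 n' t_1^{-1}$ and $n\mapsto t_2^{-1} n t_2$ inside $N$. Because every simple root satisfies $|\alpha(t)|=1$ on $A_n$ (as $\alpha(t)\in 1+\varpi^n\OO\subset\OO^*$), the modulus character $\modulus_B$ is trivial on $A_n$ and these conjugations preserve the Haar measure on $N$. Combined with the bi-$A_n$-invariance of $\Phi$ assumed in the lemma, which eliminates $t_1,t_2$ from $\Phi$, this gives
\[
\Bes_\pi(t_1 g t_2)W_0(e)=\stint_N\stint_N\Phi(n' g n)\psi_N(t_1 n' t_1^{-1})^{-1}\psi_N(t_2^{-1} n t_2)^{-1}\,dn'\,dn.
\]
Averaging over $(t_1,t_2)\in A_n\times A_n$ and interchanging the two finite averages with the double stable integral, Lemma~\ref{lem: Anave} (applied directly to the first twisted character and, after the substitution $t_2\mapsto t_2^{-1}$, to the second) collapses the averages to $\psi_N(n')^{-1}\mathbf{1}_{n'\in N_n}$ and $\psi_N(n)^{-1}\mathbf{1}_{n\in N_n}$ respectively. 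The resulting expression equals $\alpha_n(g)$ after the harmless substitution $n\mapsto n^{-1}$ in the outer integral.

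The main obstacle is the interchange of the $A_n\times A_n$ average with the double stable integral. I would handle it exactly as in the proof of Lemma~\ref{lem: besselAn}: choose $\Omega_m\in\csgr(N)$ large enough for both stable integrals to stabilize over $\Omega_m$. Since $|\alpha(t)|=1$ for every root $\alpha$ when $t\in A_n\subset A^d$, we have $A_n\subset A^1$; combined with the fact that $A_n$ commutes with $a$, each $\Omega_m=a^m\Omega_0 a^{-m}$ is stable under conjugation by $A_n$, so the stabilization is uniform in $(t_1,t_2)$. On the truncated region Fubini applies immediately; after performing the $A_n$-averages the resulting integrand is supported on $(\Omega_m\cap N_n)\times(\Omega_m\cap N_n)$, and Lemma~\ref{lem: locallyl1} together with the bound $|\Phi|\ll\Xi$ coming from temperedness (cf.~Remark~\ref{r: appendB}) provides the absolute convergence that justifies passing to the limit $\Omega_m\nearrow N$.
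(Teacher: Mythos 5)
Your proposal is correct and follows essentially the same route as the paper's proof: expand $\Bes_\pi(t_1gt_2)W_0(e)$ via \eqref{eq: defbes} and \eqref{eq: W0vee} into an iterated (stable) integral of $\Phi$, move $t_1,t_2$ into the characters using the bi-$A_n$-invariance of $\Phi$ and the $A_n$-conjugation-stability of the truncating subgroups $\Omega_m$, collapse the averages with Lemma~\ref{lem: Anave}, and conclude by the absolute convergence over $N_n\times N_n$ from Remark~\ref{r: appendB}. The only (cosmetic) difference is that you perform the two $A_n$-averages simultaneously, whereas the paper does them one at a time, truncating the inner stable integral in between.
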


\begin{proof}
First note that for any compact subset $C$ of $G\spcl$ we have
\[
\int_{\Omega_m}\big(\stint_N\Phi(n_1gn_2^{-1})\psi_N(n_1)^{-1}\ dn_1\big)\psi_N(n_2)\ dn_2=\Bes_\pi(g)W_0(e)
\]
for all $g\in C$ and $m$ sufficiently large. Indeed, by \eqref{eq: W0vee} the inner stable integral is $W_0(gn_2^{-1})$.
Thus, the relation above follows from \cite{MR3021791} and \eqref{eq: defbes}.

Now, for any $t\in A_n$ we have
\begin{multline*}
\Bes_\pi(gt)W_0(e)=
\int_{\Omega_m}\big(\stint_N\Phi(n_1gtn_2^{-1})\psi_N(n_1)^{-1}\ dn_1\big)\psi_N(n_2)\ dn_2\\=
\int_{\Omega_m}\big(\stint_N\Phi(n_1gn_2^{-1}t)\psi_N(n_1)^{-1}\ dn_1\big)\psi_N(t^{-1}n_2t)\ dn_2\\=
\int_{\Omega_m}\big(\stint_N\Phi(n_1gn_2^{-1})\psi_N(n_1)^{-1}\ dn_1\big)\psi_N(t^{-1}n_2t)\ dn_2.
\end{multline*}
Thus, by Lemma~\ref{lem: Anave}, we have
\[
\vol(A_n)^{-1}\int_{A_n}\Bes_\pi(gt)\ dt\ W_0(e)=
\int_{N_n\cap\Omega_m}\big(\stint_N\Phi(n_1gn_2^{-1})\psi_N(n_1)^{-1}\ dn_1\big)\psi_N(n_2)\ dn_2.
\]
For $g$ in a compact set $C$ of $G\spcl$ and for $m'$ sufficiently large (depending on $C$ and $m$)
we can write this as
\[
\int_{N_n\cap\Omega_m}\int_{\Omega_{m'}}\Phi(n_1gn_2^{-1})\psi_N(n_1)^{-1}\ dn_1\ \psi_N(n_2)\ dn_2.
\]
Now, for any $t_1\in A_n$
\begin{multline*}
\vol(A_n)^{-1}\int_{A_n}\Bes_\pi(t_1gt_2)\ dt_2\ W_0(e)\\=
\int_{N_n\cap\Omega_m}\int_{\Omega_{m'}}\Phi(n_1t_1gn_2^{-1})\psi_N(n_1)^{-1}\ dn_1\ \psi_N(n_2)\ dn_2\\=
\int_{N_n\cap\Omega_m}\int_{\Omega_{m'}}\Phi(t_1n_1gn_2^{-1})\psi_N(t_1n_1t_1^{-1})^{-1}\ dn_1\ \psi_N(n_2)\ dn_2\\=
\int_{N_n\cap\Omega_m}\int_{\Omega_{m'}}\Phi(n_1gn_2^{-1})\psi_N(t_1n_1t_1^{-1})^{-1}\ dn_1\ \psi_N(n_2)\ dn_2.
\end{multline*}
Averaging over $t_1\in A_n$ we get
\[
\Bes_\pi^{A_n}(g)W_0(e)=\int_{N_n\cap\Omega_m}\int_{N_n\cap\Omega_{m'}}\Phi(n_1gn_2^{-1})\psi_N(n_1)^{-1}\psi_N(n_2)\ dn_1\ dn_2.
\]
The lemma follows from the convergence of
\[
\int_{N_n}\int_{N_n}\abs{\Phi(n_1gn_2^{-1})}\ dn_1\ dn_2,
\]
i.e., Remark~\ref{r: appendB}.
\end{proof}

\begin{proof}[Proof of Theorem \ref{thm: nonvanishingBesselfunction}]
Since $B_\pi(f)\ne0$ we have $\alpha_n\rest_{G\spcl}\not\equiv0$ for $n\gg1$ by Lemma \ref{lem: besselAn}.
Hence, by Lemma \ref{lem: besAn}, $\Bes_\pi^{A_n}\rest_{G\spcl}\not\equiv0$ and therefore $\Bes_\pi\big|_{G\spcl}\not\equiv0$.
\end{proof}

\begin{remark}
Theorem \ref{thm: nonvanishingBesselfunction} and its proof are also valid for $\Mp_n$.
\end{remark}


\providecommand{\bysame}{\leavevmode\hbox to3em{\hrulefill}\thinspace}
\providecommand{\MR}{\relax\ifhmode\unskip\space\fi MR }
\providecommand{\MRhref}[2]{%
  \href{http://www.ams.org/mathscinet-getitem?mr=#1}{#2}
}
\providecommand{\href}[2]{#2}

\end{document}